\documentclass[a4paper,english,11pt]{article}
\usepackage[utf8]{inputenc}
\usepackage{babel}
\usepackage{vmargin,pifont}
\usepackage{amsmath}
\usepackage{amsthm}
\usepackage{amsfonts}
\usepackage{amssymb}
\usepackage[arrow, matrix, curve]{xy}
\usepackage{paralist}

\newcommand{\ndash}{\nobreakdash-\hspace{0pt}}

\newtheoremstyle{axel}
{}		
{}		
{\itshape}	
{}		
{\bfseries}	
{}		
{0pt\penalty1000}		
{}		
\newtheoremstyle{axelnl}
{}		
{}		
{\itshape}	
{}		
{\bfseries}	
{}		
{\newline}	
{}		
\theoremstyle{axelnl}
\newtheorem{theorem}{Theorem}[section]
\newtheorem{definition}{Definition}[section]
\newtheorem{cor}{Corollary}[section]
\newtheorem{remark}{Remark}[section]
\newtheorem{lemma}{Lemma}[section]
\newtheorem{example}{Example}[section]

\theoremstyle{axel}
\newtheorem{lemmas}[lemma]{Lemma}
\newtheorem{cors}[cor]{Corollary}

\begin{document}

\title{Kac-Moody groups, analytic regularity conditions and cities}
\author{Walter Freyn}
\date{April 2010}
\maketitle

\begin{abstract}
The relationship between minimal algebraic Kac-Moody groups and twin buildings is well known as is the relationship between formal completions in one direction and affine buildings --- cf.~\cite{Tits84}. Nevertheless, as the completion of a Kac-Moody group in one direction destroys the opposite $BN$\ndash pair, there exists no longer a twin building. For similar reasons, there are so far no buildings at all for analytic completions of affine Kac-Moody groups as studied in \cite{PressleySegal86}. In this article we construct a new type of twin buildings, called twin cities, that are associated to affine analytic Kac-Moody groups over $\mathbb{R}$ and $\mathbb{C}$. Twin cities consist of two sets of buildings. We describe applications of cities in infinite dimensional differential geometry by proving infinite dimensional versions of classical results from finite dimensional differential geometry: For example, we show that points in an isoparametric submanifold in a Hilbert space as described in \cite{Terng89} correspond to all chambers in a city. In a sequel we will describe the theory of twin cities for formal completions.
\end{abstract}

\section{Introduction}

Affine Kac-Moody algebras can be viewed as $2$\ndash dimensional extensions of (twisted) polynomial loop algebras. Completing those polynomial algebras with respect to various norms, we can construct various completions of Kac-Moody algebras, denoted $\widehat{L}(\mathfrak{g}, \sigma)$. The resulting algebras are called Kac-Moody algebras of the ``analytic'' level, in contrast for example to formal completions --- cf.~\cite{Tits84}. Depending on the norms, those algebras are Hilbert\ndash, Banach\ndash, Fréchet\ndash, etc.\ Lie algebras. 

In this article we develop the theory of twin buildings associated to affine Kac-Moody algebras and Kac-Moody groups of the ``analytic'' level.

The construction of affine twin buildings for ``minimal'' affine Kac-Moody groups is well known since 25 years --- cf.~\cite{AbramenkoBrown08}. Nevertheless as the completion of a Kac-Moody group destroys the twin $BN$\ndash pair of the Kac-Moody group, this theory fails for all completions. As analytic completions of Kac-Moody groups and Kac-Moody algebras play an important role in infinite dimensional differential geometry --- cf.~\cite{PressleySegal86}, \cite{Terng95}, \cite{Heintze06}, \cite{Freyn09}, and \cite{Freyn10a} --- this is a serious detriment.

We solve this problem by the introduction of geometric twin $BN$\ndash pairs and their associated cities\footnote{In \cite{Freyn09} and \cite{Freyn10a} we called cities ``universal geometric twin buildings'', but changed the denomination, being adverted of the risk of confusion with the universal ``algebraic'' twin building, introduced by Ronan and Tits --- cf.~\cite{RonanTits94}.}. Those cities consist of two parts, denoted $\mathfrak{B}^+$ and $\mathfrak{B}^-$, that each consist of an infinity of affine buildings. Furthermore each pair $(\Delta^+, \Delta^-)$, consisting of an affine building $\Delta^+$ in $\mathfrak{B}^+$ and an affine building $\Delta^-$ in $\mathfrak{B}^-$, is an algebraic affine twin building. The main results of this article can be summarized as follows:

\begin{theorem}[Twin cities]
   For each analytic Kac-Moody group $\mathcal{G}$ there is an associated twin city $\mathfrak{B}= \mathfrak{B}^+ \cup \mathfrak{B^-}$, such that
   \begin{enumerate}[(i)]
     \item Each connected component $\Delta^{\pm}$ in $\mathfrak{B}^{\pm}$ is an affine building.
     \item Each pair $(\Delta^+, \Delta^-)\in \mathfrak{B}^+\cup
      \mathfrak{B}^-$, consisting of a building in $\mathfrak{B}^+$ (``positive'' building) and one in $\mathfrak{B}^-$ (``negative'' building),
     is an affine twin building.
     \item $\mathfrak{B}=\mathfrak{B}^+\cup \mathfrak{B}^-$ has a spherical building at infinity.
     \item $\mathcal{G}$ acts on its twin city.
     \item ``Small'' twin cities, associated to Kac-Moody groups, defined by stronger regularity conditions, embed into ``big'' twin cities, associated to Kac-Moody groups, defined by weaker regularity conditions.
   \end{enumerate}
\end{theorem}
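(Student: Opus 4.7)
The plan is to construct, inside the analytic Kac-Moody group $\mathcal{G}$, a rich family of algebraic (minimal) affine Kac-Moody subgroups, each still carrying a genuine twin $BN$-pair, and then to assemble the associated classical algebraic twin buildings into the city. Concretely, start with the minimal Kac-Moody subgroup $G_{\min}\subset\mathcal{G}$ and its standard twin $BN$-pair $(B^+,B^-,N)$; this yields a classical algebraic affine twin building $(\Delta_0^+,\Delta_0^-)$ by the Tits construction. For each $g\in\mathcal{G}$ one obtains a conjugate subgroup $G_g:=gG_{\min}g^{-1}$ equipped with the conjugated twin $BN$-pair, hence an algebraic affine twin building $(\Delta_g^+,\Delta_g^-)$. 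Define $\mathfrak{B}^\pm$ as the union of these $\Delta_g^\pm$, where two conjugates are identified when they produce the same building. The notion of \emph{geometric twin $BN$-pair} announced in the introduction is precisely what encodes this family uniformly; it must be set up so that the pieces fit together without referring to a non-existent global $BN$-pair on $\mathcal{G}$.

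For (i), each connected component is, by construction, the standard building of an affine $BN$-pair coming from an algebraic Kac-Moody subgroup, so the classical theory (cf.~\cite{AbramenkoBrown08}) applies verbatim. For (ii), given a positive component $\Delta^+\in\mathfrak{B}^+$ and a negative component $\Delta^-\in\mathfrak{B}^-$, the task is to exhibit an algebraic subgroup of $\mathcal{G}$ whose (algebraic) twin $BN$-pair simultaneously realises both sides; the codistance on $(\Delta^+,\Delta^-)$ is then inherited from the codistance of that algebraic twin building. The required subgroup should come out of the geometric twin $BN$-pair machinery, which pairs positive and negative chambers by a common apartment.

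Parts (iii) and (iv) are comparatively direct. The spherical building at infinity is built from asymptotic classes of sectors in any of the affine components; since the underlying finite root datum is shared by all pieces, these classes glue consistently. The action of $\mathcal{G}$ on $\mathfrak{B}^\pm$ is induced by conjugation on the indexing family $\{G_g\}$, together with the natural action on each algebraic twin sub-building, and preserves the twinning because conjugation respects the geometric twin $BN$-pair structure. Part (v) is a functoriality statement: a stronger regularity condition determines a subgroup $\mathcal{G}'\subset\mathcal{G}$, whose family of conjugates of $G_{\min}$ is contained in the family for $\mathcal{G}$; taking the induced map of cities gives the asserted embedding, equivariant under $\mathcal{G}'$.

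The hard part, and the conceptual heart of the argument, is (ii). For two independently chosen conjugates $G_g,G_h\subset\mathcal{G}$ the resulting buildings $\Delta^+_g$ and $\Delta^-_h$ need not a priori belong to a common algebraic twin building inside $\mathcal{G}$; one must show that there nevertheless exists an algebraic subgroup of $\mathcal{G}$ realising the pair, or equivalently that the twinning codistance extends consistently over the whole product $\mathfrak{B}^+\times\mathfrak{B}^-$. This is precisely where analytic completions genuinely deviate from the algebraic setting, and where the geometric twin $BN$-pair formalism has to do the substantive work; I expect it to rest on a density argument together with a compatibility between the analytic topology on $\mathcal{G}$ and the simplicial structure on the algebraic sub-buildings.
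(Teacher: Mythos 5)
Your bottom-up construction (indexing algebraic twin buildings by conjugates $gG_{\min}g^{-1}$ and taking the union) is not the paper's route, and as written it has a genuine gap at exactly the point you flag as the crux. The paper works top-down: it sets $\mathcal{C}^{\pm}:=\widehat{L}(G,\sigma)/B^{\pm}$ for the \emph{completed} group, and the whole theorem then rests on one decomposition-theoretic fact (Lemma 4.1 / the Bruhat and Bruhat-twin decomposition theorems of Section 4, ultimately Pressley--Segal, ch.~8): the Bruhat \emph{twin} (Birkhoff) decomposition $\widehat{L}(G,\sigma)=\coprod_w B^{+}wB^{-}$ holds on the \emph{entire} completed group, whereas the same-sign Bruhat decomposition holds only on the proper subgroups $\widehat{L}(G,\sigma)^{\pm}$. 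The second fact is what breaks $\mathfrak{B}^{\pm}$ into components (the Weyl distance is only partially defined, and the finite-distance classes are verified directly to satisfy the $W$-metric building axioms); the first fact is what proves (ii), since it makes the codistance $\delta^{*}(fB^{+},gB^{-})=w^{\pm}(f^{-1}g)$ globally defined between \emph{arbitrary} positive and negative chambers, so every positive component is twinned with every negative one by direct verification of the codistance axioms. Your proposed resolution of (ii) --- ``a density argument together with a compatibility between the analytic topology and the simplicial structure'' --- is not an argument; density of $L_{\mathrm{alg}}G$ in $LG$ does not by itself produce a well-defined Weyl codistance, and the substantive input you are missing is precisely the global Birkhoff decomposition. (Even your alternative route of exhibiting a common quasi-algebraic subgroup for a given pair of components reduces to finding a pair of opposite Borel subgroups, one stabilizing a chamber in each component, which again is the Birkhoff decomposition in disguise; the paper's Section 3 results on $G(B^{+},B^{-})$ and conjugacy of quasi-algebraic subgroups are used for the group-action statements, not to establish the twinning.)

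A secondary issue: defining $\mathfrak{B}^{\pm}$ as a union of the buildings of the conjugates $G_{g}$ silently assumes that these algebraic sub-buildings either coincide or are disjoint, and that each one fills out a full connected component of the would-be city. The latter amounts to $\widehat{L}(G,\sigma)^{+}=L_{\mathrm{alg}}G\cdot B^{+}$ (transitivity of the algebraic subgroup on the chambers of the completed component), which is true because the Schubert cells $B^{+}wB^{+}/B^{+}$ are finite products of one-parameter real root groups lying in $L_{\mathrm{alg}}G$, but it is a statement you would have to prove; in the paper it is subsumed in Lemma 4.4 and the transitivity argument inside Lemma 4.8. Parts (i), (iii), (iv), (v) of your sketch are in the right spirit and broadly match the paper (components as buildings, left multiplication for the action, inclusions of groups for (v), sectors at infinity for (iii)), but they all presuppose the global structure that only the twin decomposition provides.
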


Recall, that an affine Kac-Moody algebra $\widehat{L}(G, \sigma)$ admits an invariant bilinear form of Lorentz signature and denote by $c$ and $d$ the generators of the $2$\ndash dimensional extension of the loop algebra. 
The relationship between Kac-Moody algebras and twin cities is described in the following result:

\begin{theorem} [Embedding of cities]
Denote by $H_{l, r}$ the intersection of the sphere of radius $l\in \mathbb{R}$  of a real affine Kac-Moody algebra $\widehat{L}(\mathfrak{g}, \sigma)$ with the horospheres $r_d=\pm r\not=0$, where $r_d$ is the coefficient of $d$ in $\widehat{L}(\mathfrak{g}, \sigma)$.
  There is a $2$\ndash parameter family $\varphi_{l,r}, (l,r)\in \mathbb{R}\times \mathbb{R}^+$ of $\widehat{L}(G, \sigma)$\ndash equivariant immersions of the  twin city $\mathfrak{B^+\cup \mathfrak{B^-}}$ into  $\widehat{L}(\mathfrak{g},\sigma)$. It  identifies $\mathfrak{B}$ with $H_{l, r}$. The two parts of the city
 $\mathfrak{B}^+$ and $\mathfrak{B}^-$ are immersed into the two sheets of $H_{l,r}$ described by $r_d<0$ resp.\ $r_d>0$ of the space $H_{l, r}$.
\end{theorem}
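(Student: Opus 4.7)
The plan is to construct $\varphi_{l,r}$ as an adjoint orbit map, extending the classical identification of the flag variety $G/B$ of a finite-dimensional simple group with an adjoint orbit through a regular Cartan element to the parahoric setting of $\widehat{L}(\mathfrak{g},\sigma)$ and its $2$-dimensional extension. Fix a Cartan subalgebra $\widehat{\mathfrak{h}}\subset\widehat{L}(\mathfrak{g},\sigma)$ and pick elements $\xi^\pm=\xi^\pm_{l,r}\in\widehat{\mathfrak{h}}$ with $\langle\xi^\pm,\xi^\pm\rangle=l^2$ and $d$-coefficient $\mp r$, chosen regular enough that the $\mathcal{G}$-stabiliser of $\xi^\pm$ is precisely the parahoric subgroup fixing a chosen fundamental chamber $c_0^\pm$ of a distinguished building $\Delta^\pm\subset\mathfrak{B}^\pm$. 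The Lorentz signature of the invariant form together with $r\neq 0$ makes such a choice possible for every $(l,r)\in\mathbb{R}\times\mathbb{R}^+$.

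With these base points fixed, define $\varphi_{l,r}(g\cdot c_0^\pm):=\mathrm{Ad}(g)\,\xi^\pm$ on chambers of $\Delta^\pm$, and extend to the full city by replacing $\xi^\pm$ by face-adapted base points whose $\mathcal{G}$-stabilisers are the corresponding parahoric subgroups, and by repeating the construction for every building in $\mathfrak{B}^\pm$. Equivariance under $\mathcal{G}$ is then built into the construction, and the immersion property on each building follows from regularity of the base point together with smoothness of $\mathrm{Ad}$ on $\widehat{L}(\mathfrak{g},\sigma)$.

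Next I would verify that the image lies in $H_{l,r}$. Norm preservation $\|\mathrm{Ad}(g)\xi^\pm\|=l$ is immediate from $\mathrm{Ad}$-invariance of the form. For the $d$-coefficient, note that although $d$ itself is not fixed by $\mathrm{Ad}(\mathcal{G})$, direct computation on exponentials of torus, root-vector, and loop-rotation elements shows that the linear functional ``coefficient of $d$'' is $\mathrm{Ad}(\mathcal{G})$-invariant: the corrections of the form $[X,d]$ always land in the loop part plus centre. Hence the orbit of $\xi^\pm$ remains in the sheet $\{r_d=\mp r\}$, and $\mathfrak{B}^\pm$ maps into the sheet described by $r_d<0$, respectively $r_d>0$, as required. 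Surjectivity onto each sheet, and hence the identification $\mathfrak{B}\simeq H_{l,r}$, then follows from transitivity of $\mathcal{G}$ on chambers of each distinguished building together with a connectedness argument on the two sheets of the quadric.

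The main obstacle is the interaction between the many-building combinatorial structure of a city and the geometry of a single adjoint orbit. Regularity of $\xi^\pm$ only yields a clean set-theoretic identification between chambers of one distinguished building and the orbit, whereas $\mathfrak{B}^\pm$ carries an infinity of buildings that share apartments and chambers; to show that $\varphi_{l,r}$ identifies $\mathfrak{B}$ with $H_{l,r}$ in a way compatible with all these overlapping structures — and not merely as a multi-valued immersion — will require the detailed structure theory of twin cities developed earlier in the paper, together with a careful analysis of how parahoric stabilisers intersect across the different choices of Borel subgroup that produce the different buildings of $\mathfrak{B}^\pm$.
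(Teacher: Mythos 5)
Your core construction is the same as the paper's: the map is the adjoint orbit map through points of the fundamental alcove $\Delta$ in a fixed flat $\widehat{\mathfrak{a}}$, equivariance is built in, the norm is preserved by invariance of the form, and the $d$\ndash coefficient is preserved because the $\mathrm{Ad}$\ndash corrections land in the loop part plus centre. The paper realizes this in one stroke by writing the city as $(L(G,\sigma)/T\times\Delta)/\sim$ and mapping $(gT,u)\mapsto gug^{-1}$, so your ``face-adapted base points'' are simply the boundary points of the closed alcove.

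There are, however, two places where your argument has a real gap. First, surjectivity onto $H_{l,r}$ (the claim that $\varphi_{l,r}$ \emph{identifies} $\mathfrak{B}$ with $H_{l,r}$) cannot be obtained from ``transitivity on chambers plus connectedness.'' Each sheet of $H_{l,r}$ is a union of infinitely many adjoint orbits, one for each point of $\Delta$; a single orbit is a proper submanifold, and no topological argument on the sheet forces the union of the orbits through $\Delta$ to exhaust it. What is actually needed is that the standard flat intersected with $H_{l,r}$ is a \emph{section} meeting every orbit, i.e.\ the polarity of the adjoint action on $H_{l,r}$ (equivalently, conjugacy of all tori of finite type, theorem~\ref{polaractiononmg}, proved in \cite{Freyn09} and, for $H^1$\ndash loops, in \cite{Terng95}). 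This is the essential analytic input of the whole theorem and your proposal never invokes it — indeed the theorem is stated in the paper only under this polarity hypothesis. Second, the compatibility across the infinitely many buildings of $\mathfrak{B}^{\pm}$, which you correctly flag as ``the main obstacle'' but leave unresolved, is not an extra difficulty to be overcome: it is dissolved by taking the quotient in the realization by the \emph{full} group $\widehat{L}(G,\sigma)$ (via $L(G_{\mathbb{C}},\sigma)/B^{\pm}=L(G_{\mathbb{R}},\sigma)/T$) rather than building by building; the single orbit map is then globally defined on all of $\mathfrak{B}^{\pm}$ at once, and only the Bruhat \emph{distance}, not the map itself, fails to connect different components. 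As written, your proof establishes an equivariant immersion of one building into one orbit, but neither the identification with $H_{l,r}$ nor the statement about the whole city.
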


We describe the content of this article in more detail:

Section~\ref{section:summary_of_the_algebraic_theory} is devoted to a short summary of the theory of affine twin buildings.

In section~\ref{section:regularity} we introduce the most important regularity conditions and study the embeddings of smaller Kac-Moody groups (defined using stronger regularity conditions) into bigger ones (defined using weaker regularity conditions). We call a subgroup of $L(G, \sigma)$, which is isomorphic to
$L_{\textrm{alg}}G^{\sigma}$, a quasi-algebraic group. The main result of section~\ref{section:regularity} is that all quasi-algebraic subgroups are conjugate.
Furthermore we study the relationship between Borel subgroups in $L_{\textrm{alg}}G^{\sigma}$ and Borel subgroups in $L(G, \sigma)$ (resp.\ $\widehat{L}(G, \sigma)$).

Section~\ref{section:twin_cities} contains the core of the article: we define geometric $BN$\ndash pairs and their cities, which are the generalizations of $BN$\ndash pairs and buildings to completed groups, and study group actions on them.

In section~\ref{section:twin_cities_and_kac_moody_algebras} we describe the connection between polar actions and cities. In particular we prove that points in isoparametric PF submanifolds in a Hilbert space, as introduced in \cite{Terng89}, correspond to chambers in the twin city of $H^1$\ndash regularity.

In section~\ref{section:topology_and_geometry} we investigate the topology of the space of chambers, the space of affine buildings and the twin city. We define a ultrametric pseudo distance on the space of affine buildings.

In section~\ref{section:the_spherical_building_at_infinity} we study the spherical building at infinity. For each affine twin building, we get a spherical building over the field of rational functions. Furthermore --- interpreting the loop groups as realizations of an affine algebraic group scheme $G$ over a ring of functions $R$ --- we study the spherical building over the quotient field $Q(R)$. This gives a spherical building for $\mathfrak{B}$, i.e.\ a spherical building $\mathfrak{B}^{\infty}$ such that the spherical buildings $\Delta^{\infty}$ of $\Delta^{\pm}$ embed into $\mathfrak{B}^{\infty}$ as subbuildings.

In a sequel to this paper, we will describe an abstract theory of cities and investigate the relationship with universal algebraic twin buildings, introduced by Marc Ronan and Jacques Tits in \cite{RonanTits94} and \cite{RonanTits99}.

\section{Summary of the algebraic theory}
\label{section:summary_of_the_algebraic_theory}

In this section we collect basic results about algebraic affine twin buildings for Kac-Moody groups. Two references for this section are \cite{AbramenkoBrown08} and \cite{Remy02}.

\begin{samepage}
\begin{definition}[Twin $BN$\ndash pair] 
Let $\widehat{L_{\textrm{alg}}G_{\mathbb{C}}}$ be a complex Kac-Moody group. $(B^+, B^-, N, W, S)$  is a twin $BN$\ndash pair for $\widehat{L_{\textrm{alg}}G_{\mathbb{C}}}$ iff:
\begin{enumerate}
	\item $(B^+, N, W, S)$ is a $BN$\ndash pair  (called $B^+N$),	
	\item $(B^-, N, W, S)$ is a $BN$\ndash pair  (called $B^-N$),
	\item $B^+N$ and $B^-N$ are compatible, i.e.
    \begin{enumerate}
    \item If $l(ws)<l(w)$ then $B^{\epsilon}wB^{-\epsilon}sB^{-\epsilon}= B^{\epsilon}wsB^{-\epsilon}$ for $\epsilon \in \{+,-\}$, $w\in W, s \in S$,
    \item $B^+s\cap B^-=\emptyset \hspace{2pt}\forall s \in S$.    
    \end{enumerate}
\end{enumerate}  
\end{definition}
\end{samepage}

We use the notation $w(f)$ (resp.\ if there is danger of ambiguity: $w^{\epsilon}(f)$, $w^{\pm}(f)$, $w^{\mp}(f)$) to denote the class of $f$ in the corresponding Bruhat decomposition.
The existence of the twin $BN$\ndash pairs yields Bruhat decompositions for $\widehat{L_{\textrm{alg}}G_{\mathbb{C}}}$ similar to the finite dimensional case:

\begin{theorem}[Bruhat decomposition]
Let $\widehat{L_{alg}G}$ be an affine algebraic Kac-Moody group with affine Weyl group $W_{\textrm{aff}}$.
Let furthermore $B^\pm$ denote a positive (resp.\ negative) Borel group. There are decompositions
$$\widehat{L_{alg}G}=\coprod_{w\in W_{\textrm{aff}}}B^+wB^+=\coprod_{w\in
W_{\textrm{aff}}}B^-wB^-\,.$$
\end{theorem}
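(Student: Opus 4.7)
The plan is to reduce the twin statement to two applications of the classical Bruhat decomposition for a single $BN$\ndash pair. Since the twin $BN$\ndash pair axioms require \emph{both} $(B^+, N, W_{\textrm{aff}}, S)$ and $(B^-, N, W_{\textrm{aff}}, S)$ to be $BN$\ndash pairs on their own, the two equalities in the statement are independent and can be proved by the same argument, replacing $B^+$ with $B^-$ in the second case. The extra compatibility axioms of a \emph{twin} $BN$\ndash pair (the $l(ws)<l(w)$ identity and $B^+s\cap B^-=\emptyset$) are not needed here; they will be needed only when one wishes to relate $B^+wB^+$ to $B^-wB^-$ or to twin apartments.

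First I would recall the axioms of a (single) $BN$\ndash pair: $G$ is generated by $B\cup N$; $T:=B\cap N$ is normal in $N$ with $W=N/T$ generated by the set $S$ of involutions; for $s\in S$ and $w\in W$ one has the key inclusion $sBw\subseteq BwB\cup BswB$; and $sBs^{-1}\not\subseteq B$. From these I would deduce the decomposition in two steps. \textbf{Covering:} Show that $\bigcup_{w\in W_{\textrm{aff}}} B^+wB^+$ is closed under left multiplication by $B^+$ (trivial) and by $N$ (using that $N$ normalizes $T$ and that $W_{\textrm{aff}}=N/T$); since $B^+\cup N$ generates $\widehat{L_{\textrm{alg}}G}$, this set equals the whole group. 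Alternatively one may induct on the minimal word length needed to express an element as a product of generators, using $sB^+w\subseteq B^+wB^+\cup B^+swB^+$ in the inductive step. \textbf{Disjointness:} Assume $B^+wB^+=B^+w'B^+$ for $w,w'\in W_{\textrm{aff}}$ and argue by induction on $\min(l(w),l(w'))$. For $w=1$ one gets $w'\in B^+\cap N=T$, so $w'=1$ in $W_{\textrm{aff}}$. For $l(w)\geq 1$ write $w=sw_1$ with $l(w_1)=l(w)-1$ and apply the inclusion $sB^+w'\subseteq B^+w'B^+\cup B^+sw'B^+$ to get $w_1\in\{w',sw'\}$; one of the two cases is excluded by the length hypothesis and the axiom $sB^+s^{-1}\not\subseteq B^+$, and the other reduces to a smaller instance.

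Finally I would repeat the verbatim argument for the negative $BN$\ndash pair $(B^-,N,W_{\textrm{aff}},S)$ to obtain the second equality. Since the two Weyl groups coincide (both quotients are $N/T$ and the generating set $S$ is the same by definition of the twin $BN$\ndash pair), both decompositions are indexed by the \emph{same} $W_{\textrm{aff}}$, which is what makes the notation of the statement consistent.

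There is essentially no obstacle here beyond bookkeeping: the theorem is a purely formal consequence of the $BN$\ndash pair axioms, and the real work has already been done in establishing that $(B^+,N,W_{\textrm{aff}},S)$ and $(B^-,N,W_{\textrm{aff}},S)$ are $BN$\ndash pairs for $\widehat{L_{\textrm{alg}}G}$. The only point requiring care is to make sure the induction arguments use only axioms of a single $BN$\ndash pair, so that they apply equally to $B^+$ and $B^-$; I would therefore present the argument once for a generic sign $\epsilon\in\{+,-\}$ rather than duplicating it.
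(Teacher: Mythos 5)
Your proposal is correct; note that the paper does not prove this theorem itself but simply cites the standard literature (R\'emy, chapter~1), and the argument you give --- covering plus disjointness deduced formally from the axioms of a single $BN$\ndash pair, applied once for each sign $\epsilon\in\{+,-\}$ --- is exactly the standard proof those references contain. So this is essentially the same approach, just written out rather than cited.
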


\begin{theorem}[Bruhat twin decomposition]
Let $\widehat{L_{alg}G}$ be an affine algebraic Kac-Moody group with affine Weyl group $W_{\textrm{aff}}$.
Let furthermore $B^\pm$ denote a positive and its opposite negative Borel group. There are decompositions
 $$\widehat{L_{alg}G}=\coprod_{w\in W_{\textrm{aff}}}B^{\epsilon}wB^{-\epsilon}\hspace{3pt} \epsilon \in \{+,-\}\,.$$ 
\end{theorem}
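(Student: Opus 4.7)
The plan is to prove coverage $\widehat{L_{\mathrm{alg}}G} = \bigcup_{w\in W_{\mathrm{aff}}} B^+ w B^-$ and disjointness of distinct cells; the case $\epsilon=-$ is symmetric. The decisive ingredients are axiom~(3a), which governs how a cell $B^+ w B^-$ transforms under right multiplication by $B^- s B^-$, and axiom~(3b), which provides the genuine ``opposition'' between $B^+$ and $B^-$.

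For coverage, set $X := \bigcup_w B^+ w B^-$ and show $X$ is stable under right multiplication by each $s \in S$, by case analysis on $l(ws) - l(w)$. When $l(ws) < l(w)$, axiom~(3a) gives $B^+ w B^- \cdot s \subseteq B^+ w B^- \cdot B^- s B^- = B^+ ws B^-$ directly. When $l(ws) > l(w)$, I would apply~(3a) to $w' = ws$ (for which $l(w's) < l(w')$) to obtain $B^+ ws B^- \cdot B^- s B^- = B^+ w B^-$, and combine this with the standard BN-pair identity $sB^-s \subseteq B^- \cup B^- s B^-$ for $(B^-,N,W,S)$ to conclude $B^+ w B^- \cdot s \subseteq B^+ w B^- \cup B^+ ws B^-$. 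Iterating yields stability of $X$ under right multiplication by any lift of $W$, hence under $N$ (using $T \subseteq B^+ \cap B^-$), and trivially under $B^-$. Since $e \in X$ and Theorem~2.2 gives $G = B^- N B^-$, we conclude $X = G$.

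For disjointness, suppose $B^+ w B^- \cap B^+ w' B^- \neq \emptyset$, equivalently $w \in B^+ w' B^-$, and induct on $l(w) + l(w')$. If both lengths are zero there is nothing to prove. Otherwise, say $l(w) \geq 1$, and pick $s \in S$ with $l(ws) < l(w)$. The coverage analysis gives $ws \in B^+ w' B^- \cdot s \subseteq B^+ w' B^- \cup B^+ w's B^-$, and the inductive hypothesis applied to the shorter pair forces $ws \in \{w', w's\}$, i.e.\ $w \in \{w's, w'\}$. The desired case is $w = w'$. The ``off-diagonal'' case $w = w's$ would give $w's \in B^+ w' B^-$; stripping $s$ via (3a) and translating by $(w')^{-1}$ then produces a forbidden element of $B^+ s \cap B^-$, contradicting~(3b).

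The main obstacle is the elimination of the spurious case in the disjointness induction. Axiom~(3b) is designed precisely for this purpose, but the reduction requires conjugating the equation by $(w')^{-1}$ and peeling off simple reflections in the correct order before (3b) can actually be invoked; done carelessly, one ends up with intersections like $B^+ s B^+ \cap B^-$ which axiom~(3b) does not directly forbid. The coverage computation, by contrast, is a routine manipulation of (3a) together with standard BN-pair identities in $(B^-, N, W, S)$.
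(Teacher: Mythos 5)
The paper does not prove this statement at all: it refers the reader to the literature (R\'emy, chapter~1, and the remark that this is the Birkhoff decomposition), so your attempt is measured against the standard textbook derivation from the twin $BN$\ndash pair axioms (Abramenko--Brown, \S 6.3.4), which is indeed the argument you are reconstructing. Your coverage half is correct: the containment $B^+wB^-sB^-\subseteq B^+wsB^-\cup B^+wB^-$ in the case $l(ws)>l(w)$ does follow from applying (3a) to $ws$ together with $sB^-s\subseteq B^-\cup B^-sB^-$, and stability of $\bigcup_w B^+wB^-$ under right multiplication by $B^-$ and $N$ combined with $G=B^-NB^-$ gives surjectivity.

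The disjointness half has a genuine gap: your induction measure $l(w)+l(w')$ does not decrease in one of the cases you need. From $w\in B^+w'B^-$ and $l(ws)<l(w)$ you get $ws\in B^+w'B^-\cup B^+w'sB^-$; the pair $(ws,w')$ is shorter, but the pair $(ws,w's)$ has total length $l(w)-1+l(w')\pm 1$, which equals $l(w)+l(w')$ whenever $l(w's)=l(w')+1$. So ``the inductive hypothesis applied to the shorter pair'' is not available precisely in that subcase (already for $w=s_1$, $w'=s_2$ one lands at $1\in B^+s_2s_1B^-$ with unchanged total length). The standard fix is to restructure: prove ``$v\in B^+wB^-\Rightarrow v=w$'' by induction on $l(v)$ alone, reducing the base case $1\in B^+wB^-$ by a separate induction on $l(w)$ that peels simple reflections off $w$ \emph{from the left} using the $\epsilon=-$ instance of (3a) (equivalently, $B^+tB^+wB^-=B^+twB^-$ for $l(tw)<l(w)$, obtained from (3a) by inversion), terminating in $s\in B^+B^-$, which contradicts (3b). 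This also supplies the missing detail in your off-diagonal case: from $w's\in B^+w'B^-$ with $l(w's)>l(w')$ one cannot ``translate by $(w')^{-1}$'' (conjugation does not preserve the double cosets); one must instead strip letters off the left of $w'$ with the left-handed (3a) until $w'=1$ and (3b) applies. Your architecture is the right one, but as written both the induction bookkeeping and the invocation of (3b) are incomplete.
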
 

For proofs see any book about Kac-Moody groups, i.e.~\cite{Remy02}, chapter~1. What we call Bruhat twin decomposition is sometimes called Birkhoff decomposition.

Note that the Bruhat decompositions and the Bruhat twin decompositions are defined on the whole group $\widehat{L_{alg}G}$. For the associated buildings, this translates into the fact that any two chambers in $\mathfrak{B}^+$ resp.\ $\mathfrak{B}^-$ have a well-defined Weyl distance and a well-defined Weyl codistance (compare definition~\ref{algebraictwinbuildingwmetricapproach}).

The same results hold for $L_{alg}G$.

\begin{samepage}
\begin{definition}[$BN$\ndash flip]
An involution $\varphi$ of a Kac-Moody group is called a $BN$\ndash flip iff
\begin{enumerate}
	\item  $\varphi^2=1$,
	\item $\varphi(B^+)=B^-$,
	\item $\varphi$ centralizes the Weyl group.
\end{enumerate}
\end{definition}
\end{samepage}

A $BN$\ndash flip swaps the two $BN$\ndash pairs. The existence of a $BN$\ndash flip is a sign of symmetry of the group structure, which can be lost for non\ndash algebraic Kac-Moody groups.

Similarly to the two conjugacy classes of Borel subgroups, the set of affine parabolic subgroups breaks up into two classes. The first one consists of affine parabolic subgroups containing a conjugate of $B^+$, the second one of those containing a conjugate of $B^-$.  
The two sets of parabolic subgroups admit a partial order relation exactly as in the finite dimensional case. To this complex, one can associate a simplicial complex, which has the structure of an affine Tits building.  The apartments are Coxeter complexes for $W_{\textrm{aff}}$.

A big difference between affine buildings and spherical ones is that chambers in affine buildings do not have opposite chambers
(recall, that two chambers are called opposite if their Weyl distance is maximal, which is not possible in an affine Weyl group). As the existence of opposite chambers is a necessary ingredient for various structure results, this is a serious detriment. The most important consequence of the existence of opposite chambers for spherical buildings is the following theorem:

\begin{theorem}\label{apartmentsconvexhulls}
In a spherical building, apartments are exactly the convex hulls of a pair of opposite chambers.
\end{theorem}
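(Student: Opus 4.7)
The plan is to prove the two inclusions separately: every apartment is the convex hull of some pair of opposite chambers it contains, and conversely the convex hull of any pair of opposite chambers in the building is an apartment. Throughout I use that in a spherical building the Weyl group $W$ is finite with a unique longest element $w_0$, and that two chambers are opposite precisely when their Weyl distance equals $w_0$. The convex hull of two chambers is taken to be the intersection of all apartments containing both.

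First I would fix an apartment $\Sigma$ and any chamber $C\in \Sigma$. Since $\Sigma$ is isomorphic to the Coxeter complex of $W$, it contains a unique chamber $D$ at Weyl distance $w_0$ from $C$, and this $D$ is opposite to $C$. I claim $\Sigma = \mathrm{conv}(C,D)$. The inclusion $\mathrm{conv}(C,D) \subseteq \Sigma$ is immediate, because $\Sigma$ itself is one of the apartments over which the intersection is taken. For the reverse inclusion, let $E$ be an arbitrary chamber of $\Sigma$ and denote by $w$ the Weyl distance from $C$ to $E$. The key combinatorial input is the identity $\ell(w_0) = \ell(w) + \ell(w^{-1}w_0)$ which holds in every finite Coxeter group, and which produces inside $\Sigma$ a minimal gallery from $C$ to $D$ passing through $E$.

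At this point I would invoke the standard building-theoretic fact that any minimal gallery between two chambers lies inside every apartment that contains its endpoints. Applied to the minimal gallery from $C$ to $D$ through $E$, this forces $E$ to lie in every apartment containing $\{C,D\}$, whence $E \in \mathrm{conv}(C,D)$. Since $E$ was arbitrary, $\Sigma \subseteq \mathrm{conv}(C,D)$, completing the first direction. For the converse, given any pair $C,D$ of opposite chambers of the building, the building axioms supply an apartment $\Sigma'$ containing both; applying the first direction to $\Sigma'$ with its distinguished opposite pair $(C,D)$ gives $\Sigma' = \mathrm{conv}(C,D)$, so convex hulls of opposite pairs are exactly apartments.

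The main obstacle is the sphericity-dependent step in the middle: constructing a minimal gallery from $C$ to $D$ through a prescribed intermediate chamber $E$. This relies on the existence of a longest element together with the length-additivity formula above, and it is precisely this ingredient that breaks down in the affine setting where $W_{\textrm{aff}}$ admits no longest element and no opposite chambers. This failure is the very phenomenon that motivates the replacement of twin buildings by twin cities later in the article.
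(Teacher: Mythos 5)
Your argument is correct and is essentially the standard proof from \cite{AbramenkoBrown08}, which is all the paper itself offers (it proves the theorem by citation). The two inputs you isolate --- length additivity $\ell(w_0)=\ell(w)+\ell(w^{-1}w_0)$ in a finite Coxeter group to route a minimal gallery from $C$ to $D$ through any prescribed $E\in\Sigma$, and convexity of apartments to force that gallery into every apartment containing $C$ and $D$ --- are exactly the right ones, and your closing remark correctly identifies why the statement fails in the affine case.
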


\begin{proof} cf.~\cite{AbramenkoBrown08}. \end{proof}

\noindent This theorem implies the corollary:

\begin{cor}
\label{cor:apartmentsystem_unique}
The apartment system in a spherical building is unique.
\end{cor}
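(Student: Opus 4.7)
The plan is to leverage Theorem~\ref{apartmentsconvexhulls}, which characterizes apartments as convex hulls of pairs of opposite chambers, to obtain a description of apartments that is intrinsic to the building and independent of any chosen apartment system. Once this intrinsic description is in hand, uniqueness of the apartment system should fall out by a standard comparison argument between any two such systems.

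Concretely, I would start by fixing two apartment systems $\mathcal{A}_1$ and $\mathcal{A}_2$ on the same spherical building $\Delta$, and picking an arbitrary apartment $\Sigma \in \mathcal{A}_1$. Because $\Sigma$ is a spherical Coxeter complex, it contains a pair of opposite chambers $C$ and $C'$. Next I would invoke the standard building axiom for the second apartment system to produce some $\Sigma' \in \mathcal{A}_2$ containing both $C$ and $C'$. Applying Theorem~\ref{apartmentsconvexhulls} to each apartment system in turn would give $\Sigma = \mathrm{conv}(C, C') = \Sigma'$, so $\Sigma \in \mathcal{A}_2$; by symmetry one concludes $\mathcal{A}_1 = \mathcal{A}_2$.

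The only real obstacle is to make sure that the notion of convex hull appearing in Theorem~\ref{apartmentsconvexhulls} is genuinely intrinsic to $\Delta$, and not secretly defined via a chosen apartment system (otherwise the argument is circular). This is fine provided one adopts the standard gallery-combinatorial definition (a set of chambers is convex iff it contains every minimal gallery between any two of its elements), as in \cite{AbramenkoBrown08}. With that definition secured, no additional technical input is required.
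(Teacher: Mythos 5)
Your proposal is correct and follows exactly the route the paper intends: the paper derives the corollary as an immediate consequence of Theorem~\ref{apartmentsconvexhulls}, and your argument (opposite chambers exist in any spherical apartment, the building axiom places them in a common apartment of the second system, and the convex-hull characterization forces the two apartments to coincide) is the standard unwinding of that implication. Your remark that convexity must be taken in the gallery-combinatorial sense, intrinsic to the building, is precisely the point that keeps the argument non-circular.
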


In contrast affine buildings have various different apartment systems, reflecting the different completions of the associated Kac-Moody groups. We will meet some of those apartment systems in section~\ref{section:the_spherical_building_at_infinity}.

So there is a need for a version of the concept of opposite chambers for affine buildings, which should then lead to a generalization of theorem~\ref{apartmentsconvexhulls}. It is clear that an opposite chamber cannot be in the same building, as it would have a Weyl distance of maximal length.  Thus the solution lies in a twinning of the two buildings associated to the two $BN$\ndash pairs.  The resulting object, called a twin building, behaves in many respects like a spherical building.

\noindent We quote the $W$\ndash metric definition of a building from the monograph~\cite{AbramenkoBrown08}, chapter~5.  
\ndash
\begin{definition}[building]
\label{wmetricbuilding}
A building of type $(W,S)$ is  a pair $(\mathcal{C}, \delta)$  consisting of a nonempty set $\mathcal{C}$ whose elements are called chambers together with a map $\delta: \mathcal{C}\times \mathcal{C}\longrightarrow W$, called the Weyl distance function, such that for all $C,D\in \mathcal{C}$ the following conditions hold: 
\begin{enumerate}
	\item $\delta(C, D)=1$ iff $C=D$.
	\item If $\delta(C,D)= w $ and $C'\in \mathcal{C}$ satisfies $\delta(C', C)=s\in S$ then $\delta(C',D)=sw$ or $w$. If, in addition, $l(sw)=l(w)+1$ then $\delta(C', D)=sw$.
	\item If $\delta(C,D)=w$ then for any $s\in S$ there is a chamber $C' \subset \mathcal{C}$, such that $\delta(C', C)=s$ and $\delta(C', D)=sw$. 
\end{enumerate}
\end{definition}

This definition coincides with the classical definition of a building as a simplicial complex. For a proof cf.~\cite{AbramenkoBrown08}. The construction of the apartments is somewhat involved. 

\begin{samepage}
\begin{definition}[Twin building]
\label{algebraictwinbuildingwmetricapproach} 
A twin building of type $(W,S)$ is a quintuple $(\mathcal{C}^+, \mathcal{C}^-, \delta^+, \delta^-, \delta^*)$ such that
\begin{enumerate}
	\item $(\mathcal{C}^+, \delta^+)$ is a building of type $(W,S)$,
	\item $(\mathcal{C}^-, \delta^-)$ is a building of type $(W,S)$,
	\item $\delta^*$ is a codistance  i.e.\ for $X \in \mathfrak{B}^{\epsilon}$ and $Y, Z \in \mathfrak{B}^{-\epsilon}$,
		\begin{enumerate}
		\item[a)] $\delta^*(X,Y)= \delta^*(Y,X)^{-1}$,
		\item[b)] $\delta^*(X,Y)=w, \delta(Y,Z)=s \in S$ and $l(ws)=l(w)-1$, then $\delta^*(X,Z)=ws$,
		\item[c)] $\delta^*(X,Y)=w$ and  $s \in S$. Then there is $Z\in \mathfrak{B}^-$ such that $\delta^*(Y,Z)=s$ and $\delta^*(X,Z)=ws$.
		\end{enumerate}
\end{enumerate}
\end{definition}
\end{samepage}

\begin{definition}
$X$ and $Y$ are called opposite iff $\delta^*(X,Y)=1$.
\end{definition}

For a pair of affine buildings the twinning is in general not uniquely determined. There are uncountable many non-isomorphic twinnings. In the case of rank $1$\ndash buildings -- that is trees --, a universal twin building has been constructed by Mark Ronan and Jacques Tits --- cf.~\cite{RonanTits94}, \cite{RonanTits99}. For more general classes of buildings this is an open problem. 

A twinning can be described via twin apartments. Here, we have again the result that the system of twin apartments is well defined. Furthermore, twin apartments are the convex hulls of opposite chambers --- cf.~\cite{AbramenkoRonan98}.

\section{Regularity and Kac-Moody theory}
\label{section:regularity}

To an affine Cartan matrix  and a field $\mathbb{F}\in \{\mathbb{R}, \mathbb{C}\}$ there is attached a great variety of different infinite dimensional Lie algebras: The minimal one is the algebraic Kac-Moody algebra, corresponding to a Lie algebra of polynomial maps. All further Kac-Moody algebras arise as completions: On the one hand, understanding this algebra as an extension of a Lie algebra over the abstract polynomial ring $\mathbb{F}[t,t^{-1}]$, we can turn to the formal completion and study the formal Kac-Moody algebras. On the other hand, taking the point of view of polynomial maps on $S^1$ resp.\ $\mathbb{C}^*$, we get a great variety of \glqq analytic\grqq\/ completions and Kac-Moody algebras associated to them --- cf.~\cite{Tits84}.

Let $\mathfrak{g}$ be a simple complex or compact Lie algebra and $\sigma$ a diagram automorphism. Denote the associated loop algebra by $L(\mathfrak{g}, \sigma):=\{f: \mathbb{R}\longrightarrow \mathfrak{g}| f(t+2\pi)=\sigma f(t), f$ satisfies some regularity condition$\}$.
The associated Kac-Moody algebra is defined by $\widehat{L}(\mathfrak{g}, \sigma)=L(\mathfrak{g}, \sigma)\oplus \mathbb{F}c\oplus \mathbb{F}d$ where $d$ acts on $L(\mathfrak{g}, \sigma)$ as a derivative and $c$ is a central element. Hence $[d,f]=f'$, $[c,d]=[c,f]=0$ and $[f,g]=[f,g]_0+\omega(f,g)c$, where $[f,g]_0$ denotes the Lie bracket of $L(G, \sigma)$ and $\omega$ is a certain antisymmetric $2$\ndash form --- cf.~\cite{Kac90}.

Similarly, to a given affine root datum, we can attach a variety of different Kac-Moody groups. In his overview  \cite{Tits84}, Jacques Tits describes realizations of a root datum at the algebraic, the formal and the analytic level. In this article, aside from the algebraic Kac-Moody groups, we will encounter various Kac-Moody groups of the analytic level. These groups are closely related: The algebraic one is contained in all other Kac-Moody groups. At the analytic level, Kac-Moody groups defined using stronger regularity conditions embed naturally into Kac-Moody groups defined using weaker regularity conditions. Nevertheless, all those embeddings are by no means unique.

Similarly to affine Kac-Moody algebras that can be viewed as $2$\ndash dimensional extensions of certain loop algebras, affine Kac-Moody groups can be realized as torus extensions $\widehat{L}(G, \sigma)$ of groups of maps $$L(G, \sigma)=\left\{f:\mathbb{R}\longrightarrow G\mid f(t+2\pi)=\sigma f(t), f \textrm{ satisfies some regularity condition}\right\}\,,$$ where $G$ denotes a simple complex or compact Lie group and $\sigma$ a diagram automorphism of $G$. We construct first $\widetilde{L}(G, \sigma)$ as a central $S^1$\ndash\ resp.\ $\mathbb{C}^*$\ndash extension, corresponding to the $c$\ndash term of the Kac-Moody algebra, then we take a semidirect product with $S^1$ resp.\ $\mathbb{C}^*$ whose action on $L(G, \sigma)$ is defined to be a shift of the argument: $w\cdot f(t)=f(t+w)$. For this shift to be well defined we need in the complex setting a continuation of $f$ to $\mathbb{C}$ --- cf.~\cite{PressleySegal86}, \cite{Popescu05}, \cite{Freyn09}, \cite{Khesin09}, and various other sources.

 Depending on their loop realizations, affine Kac-Moody algebras (resp.\ groups) break up into two subclasses: the twisted ones, i.e.\ those with $\sigma \neq\textrm{Id}$,  and the untwisted ones (i.e.\ those with $\sigma=\textrm{Id}$). Twisted affine Kac-Moody algebras (resp.\ groups) can be described as subsets of untwisted ones. Hence, it is sufficient to describe the regularity conditions for untwisted Kac-Moody algebras (resp.\ groups).

Let us mention several widely used regularity conditions.
For the description of most of them we need only the analytic structure of the Lie group $G$. Examples are the groups of continuous loops $LG$, $k$\ndash differentiable loops $L^kG^{\sigma}$, smooth loops $L^{\infty}G^{\sigma}$, real analytic or complex analytic loops $MG^{\sigma}$ resp.\ $A_nG^{\sigma}$ on $X\in \{\mathbb{C}^*, A_n:=\{z\in \mathbb{C}|e^{-n}\leq |z|\leq e^n\}\}$ (for the last two cases to make sense, we need $G$ to be a complex Lie group).

In contrast the precise meaning of algebraic (or polynomial) loops into a Lie group is not clear a priori.

The algebraic loop (resp.\ Kac-Moody) group is the smallest group, we are interested in: We put

$$L_{\textrm{alg}}G^{\sigma}:=\{f \in L(G, \sigma)|\textrm{ has a finite Fourier expansion}\}.$$
where the Fourier expansion is defined via the adjoint representation of $G$. If $G$ is complex, we can identify this group with a group of matrix\ndash valued Laurent polynomials --- cf.~\cite{PressleySegal86}. By construction, this group is isomorphic to the group $G(\mathbb{C}[t,t^{-1}])$, the realization of the affine algebraic group scheme corresponding to the Lie group $G$ over the ring $\mathbb{C}[t,t^{-1}]$ (for the definition, cf.~\cite{AbramenkoBrown08} or \cite{Waterhouse79}. $G(\mathbb{C}[t,t^{-1}])$ is the group acting in a natural way on a twin building --- cf.~\cite{Ronan03}, \cite{Remy02}.

Let us now investigate some relationships between these regularity conditions.

If $G$ is semisimple, then $L_{\textrm{alg}}G$ is dense in the group of continuous loops $LG$ --- cf.~\cite{PressleySegal86}, chapter~3.5. Hence $\widehat{L_{\textrm{alg}}G}^{\sigma}$ is dense in $\widehat{L}(G ,\sigma)$. Thus we can interpret the Kac-Moody groups $\widehat{L}(G, \sigma)$ for $G$ semisimple as completions
of the algebraic Kac-Moody groups $\widehat{L_{alg}G}^{\sigma}$. Then $\widehat{L_{alg}G}^{\sigma}$ is in a natural way
a subgroup of $\widehat{L}(G, \sigma)$.
Call a subgroup of $\widehat{L}(G, \sigma)$ which is isomorphic to the group of algebraic loops $L_{\textrm{alg}}G^{\sigma}$ a quasi\ndash algebraic subgroup --- denoted $\widehat{L_{\textrm{qalg}}G}^{\sigma}$ --- of $\widehat{L}(G, \sigma)$.

$\widehat{L_{\textrm{alg}}G}^{\sigma}$ is clearly not the only quasi\ndash algebraic subgroup of $\widehat{L}(G, \sigma)$. For example for any $f\in \widehat{L}(G, \sigma)$ the subgroup 
$ \widehat{L_{alg}G}_f^{\sigma}:=  f\widehat{L_{alg}G}^{\sigma} f^{-1}$ is quasi\ndash algebraic.

We want to show that all quasi\ndash algebraic groups are conjugate. To this end, we need the following definition:

\begin{definition}
A group $\widehat{L}(G, \sigma)$ satisfies the conjugation property iff all tori of finite type are conjugate.
\end{definition}

The conjugation property is satisfied for example by the groups $\widehat{MG}_{\mathbb{F}}^{\sigma}$, $\widehat{A_nG}^{\sigma}$ by the following theorem:

\begin{theorem}
\label{polaractiononmg}
All tori of finite type in $\widehat{XG}_{\mathbb{F}}^{\sigma}$, $X\in \{A_n , M\}$ are conjugate.
\end{theorem}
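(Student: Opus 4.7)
The strategy is to show every torus of finite type is conjugate to the standard Cartan torus by exploiting the polarity of the adjoint representation of $\widehat{XG}_{\mathbb{F}}^{\sigma}$ on its Lie algebra $\widehat{L}(\mathfrak{g},\sigma)$. Recall the ``standard'' Cartan is $\widehat{\mathfrak{h}}:=\mathfrak{h}\oplus \mathbb{F}c\oplus \mathbb{F}d$, where $\mathfrak{h}$ is a maximal torus of $\mathfrak{g}$. For the analytic completions in question, it is known --- cf.~\cite{PressleySegal86},~\cite{Terng95} and~\cite{Heintze06} --- that $\mathrm{Ad}$ is polar with section $\widehat{\mathfrak{h}}$; in particular every $\mathrm{Ad}$\ndash orbit meets $\widehat{\mathfrak{h}}$. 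I would take this polarity as the main external input.

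Let $T_1,T_2$ be two tori of finite type with Lie algebras $\mathfrak{t}_1,\mathfrak{t}_2$, finite\ndash dimensional abelian subalgebras of $\mathrm{ad}$\ndash semisimple elements, of maximal possible dimension $\dim\widehat{\mathfrak{h}}$ (this dimension condition being part of the meaning of ``finite type''). For each $i$ I would proceed as follows. Select a \emph{regular} element $X_i\in\mathfrak{t}_i$, i.e.\ one whose centralizer in $\widehat{L}(\mathfrak{g},\sigma)$ equals $\mathfrak{t}_i$; such elements form the complement of the locally finite union of hyperplanes in $\mathfrak{t}_i$ cut out by the roots of $\mathfrak{t}_i$, hence certainly exist. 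By polarity, pick $g_i\in\widehat{XG}_{\mathbb{F}}^{\sigma}$ with $\mathrm{Ad}(g_i)X_i\in\widehat{\mathfrak{h}}$. Since $\widehat{\mathfrak{h}}$ is abelian it sits inside the centralizer of $\mathrm{Ad}(g_i)X_i$, which is $\mathrm{Ad}(g_i)\mathfrak{t}_i$; the two sides have the same finite dimension, so $\mathrm{Ad}(g_i)\mathfrak{t}_i=\widehat{\mathfrak{h}}$. Exponentiating then gives $g_iT_ig_i^{-1}=\widehat{H}$, the standard Cartan torus, and therefore $T_1$ and $T_2$ are mutually conjugate.

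The principal obstacle is invoking the polarity statement in the correct regularity class. For the Sobolev/Hilbert case it is Terng's classical theorem that the gauge action of $H^1$\ndash loops on the affine space of $L^2$\ndash connections on the trivial $G$\ndash bundle over $S^1$ is polar, with the constant connections as a section; for $\widehat{MG}^{\sigma}_{\mathbb{F}}$ and $\widehat{A_nG}^{\sigma}$ one must verify that the same proof --- diagonalizing a connection by solving a linear ODE along $S^1$ --- stays inside the relevant analytic class of loops, so that the conjugating gauge transformation actually lies in the group under consideration. A secondary subtlety is the definition of ``finite type'': one needs to encode maximality of $\dim\mathfrak{t}_i$, since otherwise the statement visibly fails (a one\ndash dimensional subtorus of $\widehat{H}$ is of finite type but not conjugate to $\widehat{H}$).
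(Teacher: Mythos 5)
Your approach is the one the paper intends: the paper's own ``proof'' of this theorem is only the citation ``cf.~\cite{Freyn09}'', but the label of the theorem and the way it is later invoked (``the surjectivity of this map follows from the polarity of the adjoint action (see theorem~\ref{polaractiononmg})'') make clear that conjugacy of finite-type tori is meant to be obtained from polarity of the adjoint representation, exactly as you do. Your centralizer argument --- conjugate a regular element of $\mathfrak{t}_i$ into the section, observe that the abelian section sits inside its centralizer $\mathrm{Ad}(g_i)\mathfrak{t}_i$, and compare dimensions --- is sound, and your two closing caveats (verifying polarity in each regularity class, and building maximality into ``finite type'') are precisely the right ones.

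One correction to how you state the input: ``every $\mathrm{Ad}$\ndash orbit meets $\widehat{\mathfrak{h}}$'' is false for these algebras, and the paper says so explicitly (``it is not possible to cover $\widehat{L}(\mathfrak{g},\sigma)$ with maximal conjugate flats''). Polarity holds only after restricting to the horospheres $r_d\neq 0$; on $r_d=0$ the action degenerates to the gauge/pointwise action of the loop group and generic orbits meet no finite-dimensional flat. This does not break your proof, because any element of a finite-type torus whose centralizer is finite-dimensional automatically has $r_d\neq 0$: if $\delta(X)=r_d=0$ then every imaginary root space $\mathfrak{h}\otimes t^n$ centralizes $X$, so the centralizer is infinite-dimensional and $X$ cannot be regular in your sense. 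Hence the regular element you conjugate already lies in the region where polarity applies --- but you should choose it there explicitly, since as written the polarity statement you appeal to is the false global one rather than the true restricted one.
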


\begin{proof} cf.~\cite{Freyn09}. \end{proof}

Furthermore, the conjugation property is satisfied by the groups of $H^1$\ndash Sobolev loops --- cf.~\cite{Terng95}, of $k$\ndash times differentiable loops --- cf.~\cite{Freyn09}, and of smooth loops --- cf.~\cite{Popescu05}, \cite{Freyn09}.

\begin{theorem}[Algebraic subgroups]
\label{algebraic_subgroups}
Let $G$ be a simple, simply connected compact or complex Lie group. Suppose $\widehat{L}(G, \sigma)$ satisfies the conjugation property. Let $\widehat{L_{qalg}G}^{\sigma}\subset \widehat{L}(G, \sigma)$ be a quasi\ndash algebraic subgroup. Then there is $f\in \widehat{L}(G, \sigma)$ such that $\widehat{L_{qalg}G}^{\sigma}:= f \widehat{L_{\textrm{alg}}G}^{\sigma} f^{-1}$.
\end{theorem}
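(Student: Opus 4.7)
The plan is to reduce everything to the conjugation property applied to suitable maximal tori of finite type. Inside the model group $\widehat{L_{\textrm{alg}}G}^{\sigma}$ I first fix the standard maximal torus of finite type $\widehat{T}$, which combines a maximal torus of $G$ (the constant-loop part) with the two extension factors coming from $c$ and $d$. Any abstract isomorphism $\Phi \colon \widehat{L_{\textrm{alg}}G}^{\sigma} \xrightarrow{\sim} \widehat{L_{qalg}G}^{\sigma}$ transports $\widehat{T}$ to a subgroup $\widehat{T}' := \Phi(\widehat{T}) \subset \widehat{L}(G,\sigma)$. Since being a torus of finite type (i.e.\ a Cartan subgroup of the Kac-Moody group) is an intrinsic, isomorphism-invariant property, $\widehat{T}'$ is again a torus of finite type in $\widehat{L}(G,\sigma)$.

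Next I would invoke the conjugation property to produce an element $f \in \widehat{L}(G,\sigma)$ with $f\widehat{T}f^{-1} = \widehat{T}'$. Then $H := f\widehat{L_{\textrm{alg}}G}^{\sigma}f^{-1}$ is a quasi-algebraic subgroup that, together with $\widehat{L_{qalg}G}^{\sigma}$, contains $\widehat{T}'$. The problem is thereby reduced to showing that any two quasi-algebraic subgroups of $\widehat{L}(G,\sigma)$ which share the same finite-type torus must coincide; the claim then follows from $H = \widehat{L_{qalg}G}^{\sigma}$ by setting $\widehat{L_{qalg}G}^{\sigma} = f\widehat{L_{\textrm{alg}}G}^{\sigma}f^{-1}$.

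For the final step I would argue via the affine root system attached to $\widehat{T}'$. Decomposing $\widehat{L}(\mathfrak{g},\sigma)$ under the adjoint action of $\widehat{T}'$ produces finite-dimensional root spaces $\mathfrak{g}_\alpha$, and hence one-parameter root subgroups $U_\alpha = \exp \mathfrak{g}_\alpha \subset \widehat{L}(G,\sigma)$ which are intrinsically defined from the pair $(\widehat{T}', \widehat{L}(\mathfrak{g},\sigma))$. A quasi-algebraic group containing $\widehat{T}'$ is, by transport from the model case $\widehat{L_{\textrm{alg}}G}^{\sigma}$, generated by $\widehat{T}'$ together with the family of $U_\alpha$ indexed by the affine roots — this is the Steinberg-type presentation and it is isomorphism-invariant. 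Hence $H$ and $\widehat{L_{qalg}G}^{\sigma}$ share the same set of generators inside $\widehat{L}(G,\sigma)$ and thus coincide.

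The main obstacle, in my view, is the last step: making rigorous that a quasi-algebraic subgroup is recovered from a finite-type torus together with the ambient root subgroups. One has to verify that the root spaces produced by $\Phi$-transport from the algebraic model really agree with the $\widehat{T}'$-eigenspaces in $\widehat{L}(\mathfrak{g},\sigma)$, and that the resulting $U_\alpha$ lie in the quasi-algebraic subgroup. This is where the simplicity and simple connectedness of $G$ are needed, in order to lift the Lie-algebra level identification of root spaces to the group level and to rule out pathological isomorphisms of abstract groups that are not realized by Lie-theoretic conjugations.
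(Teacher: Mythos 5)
Your proposal follows essentially the same route as the paper: use the conjugation property to align a finite-type torus of the quasi-algebraic subgroup with one of $\widehat{L_{\textrm{alg}}G}^{\sigma}$, then argue that two quasi-algebraic subgroups sharing a maximal torus have the same root system and hence the same (simple) root subgroups, and conclude by the generation lemma (the paper cites Lemma~\ref{generation_of_l_alg_G}, generation by the $l+1$ rank-one subgroups attached to the simple affine roots, where you invoke the Steinberg-type presentation by all $U_\alpha$). The obstacle you flag at the end — that the torus and root subgroups transported by an abstract isomorphism really are finite-type tori and eigenspace root subgroups in the ambient group — is exactly the point the paper also passes over without further justification, so your account matches the paper's level of rigour as well as its strategy.
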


\noindent For the proof we use the following observation --- cf.~\cite{PressleySegal86}, proposition~5.2.5:

\begin{lemma}
\label{generation_of_l_alg_G}
If $G$ is a simple, simply connected compact Lie group of rank $l$, then the $l+1$ subgroups $i_{\alpha}(SU(2))$ corresponding to the simple affine roots generate $\widetilde{L_{alg}G}$.
\end{lemma}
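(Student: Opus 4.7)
Let $H \subseteq \widetilde{L_{alg}G}$ denote the subgroup generated by the $l+1$ images $i_{\alpha_j}(SU(2))$, $j = 0, 1, \ldots, l$. The plan is to build $H$ up in three stages: first capturing the constant loops, then enlarging to loops of arbitrary Fourier degree, and finally recovering the central circle of the extension $1 \to Z \to \widetilde{L_{alg}G} \to L_{alg}G \to 1$.

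First I would show $G \subseteq H$. The subgroups $i_{\alpha_j}(SU(2))$ for $j = 1, \ldots, l$ lie inside the constant loops $G \hookrightarrow \widetilde{L_{alg}G}$ and are the usual root-SU(2)s attached to the finite simple roots of $\mathfrak{g}$. Since $G$ is simple, simply connected, and compact, it is generated by these by a standard Chevalley-type argument: the $\mathfrak{sl}_2$-triples of the finite simple roots generate $\mathfrak{g}$ as a Lie algebra, and one upgrades to the group using connectedness and simple connectedness. Next, using the affine simple root $\alpha_0 = \delta - \theta$ with $\theta$ the highest root of $\mathfrak{g}$, the subgroup $i_{\alpha_0}(SU(2))$ contains one-parameter subgroups whose Lie-algebra tangents have the form $z\,(t \otimes E_{-\theta}) - \bar z\,(t^{-1} \otimes E_\theta)$. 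Conjugating these by $G \subseteq H$ and invoking transitivity of the finite Weyl group on long roots, I obtain analogous degree-$\pm 1$ SU(2)s in every long-root direction; short-root directions are then recovered by brackets of the form $[t \otimes E_\alpha, 1 \otimes E_\beta]$, all of which lie in $H$ by Stage~1.

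To pass from degree-$\pm 1$ generators to arbitrary Fourier degrees, I would appeal to the affine Lie bracket $[t^m \otimes X, t^n \otimes Y] = t^{m+n} \otimes [X,Y] + m\,\delta_{m+n,0}\,\langle X, Y\rangle\,c$, which shows inductively that iterated commutators of the degree-$\pm 1$ root vectors exhaust every affine root space $t^n \otimes \mathfrak{g}_\alpha$. On the group level, the corresponding one-parameter subgroups $\exp(z\,t^n \otimes X_\alpha)$ then lie in $H$. An arbitrary element of $L_{alg}G$ is a finite product of such root-space exponentials together with elements of a maximal torus of $G$; more formally, via the Bruhat decomposition $L_{alg}G = \bigsqcup_{w \in W_{\textrm{aff}}} B^+ w B^+$ with Weyl representatives taken from the $i_{\alpha_j}(SU(2))$, every double coset lies in $H$. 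Hence the image of $H$ in $L_{alg}G$ is all of $L_{alg}G$.

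The hard part will be the last stage: verifying that the full central circle $Z \cong S^1$ lies in $H$, rather than merely its preimage being surjective onto $L_{alg}G$. This requires an explicit cocycle computation: the $2$-cocycle $\omega(f,g)\,c$ is non-zero when evaluated on $t \otimes E_{-\theta}$ and $t^{-1} \otimes E_\theta$, so a suitable commutator of elements of $i_{\alpha_0}(SU(2))$ with elements of the $\theta$-SU(2) (which lives inside $G \subseteq H$ by Stage~1) projects onto a generator of $Z$. Combining this with the previous stage gives $H = \widetilde{L_{alg}G}$. A secondary bookkeeping issue is that Stage~2's short-root construction is type-dependent in the non-simply-laced cases; \emph{Pressley--Segal} avoid this by working uniformly with the explicit presentation of $\widetilde{L_{alg}G}$ over $\mathbb{C}[t, t^{-1}]$ and invoking Steinberg-type relations, and I would follow the same route.
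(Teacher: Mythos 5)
The paper does not actually prove this lemma: it is quoted verbatim from Pressley--Segal, Proposition~5.2.5, and the surrounding text only comments on the non\ndash simply connected and twisted cases. So there is no in\ndash paper argument to compare with, and your sketch has to be judged against the standard proof. Your overall architecture --- generate the constant loops $G$ from the finite simple root $SU(2)$'s, use $i_{\alpha_0}(SU(2))$ to reach Fourier degree $\pm 1$, propagate to all real affine root directions by Weyl conjugation, and then treat the central circle separately --- is the right one, and Stage~3 is fine in outline (once surjectivity onto $L_{alg}G$ is known, $H$ automatically contains the commutator subgroup of $\widetilde{L_{alg}G}$, so it suffices that the commutator pairing of lifts hits a dense subgroup of the central $S^1$, which your cocycle computation in the $\theta$\ndash\ and $\alpha_0$\ndash directions provides).

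There is, however, a genuine gap in Stage~2: you silently pass to the complexification. The one\ndash parameter subgroups $\exp(z\,t^n\otimes X_\alpha)$ and the Bruhat decomposition $\coprod_{w}B^+wB^+$ live in $L_{alg}G_{\mathbb{C}}$, not in the unitary form $L_{alg}G$ of which the $i_\alpha(SU(2))$ are subgroups; $B^+$ is not contained in $L_{alg}G$ at all, so ``every double coset lies in $H$'' cannot close the surjectivity argument for the compact group, and ``an arbitrary element of $L_{alg}G$ is a finite product of root\ndash space exponentials and a torus element'' is false as stated for the unitary form. What your generators actually yield inside $L_{alg}G$ are the unitary one\ndash parameter subgroups $\exp\bigl(z\,t^n\otimes E_\alpha-\bar z\,t^{-n}\otimes E_{-\alpha}\bigr)$ for all real affine roots, together with the maximal torus. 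To conclude you need either a Birkhoff/Iwasawa\ndash type factorization valid for polynomial loops, or --- as this paper itself does for the closely analogous characterization of $L_{alg}G$ in lemma~\ref{characterizationofalgebraicloops}, following Mitchell --- a building\ndash theoretic argument: show that $H$ acts transitively on the chambers of the affine building and that the stabilizer in $L_{alg}G$ of a chamber is already contained in $H$. (A smaller point: simple connectedness of $G$ is not merely bookkeeping; it is what makes $LG$ connected and makes the coroot circles generate the full torus, and the statement fails without it.)
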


If $G$ is not simply connected, then $LG$ is not connected. In this case the subgroups $i_{\alpha}(SU(2))$ generate the identity component $(\widetilde{L_{alg}G})_0$. The proof of \cite{PressleySegal86} generalizes to the case of twisted loop groups, as it relies only on the algebraic structure of generators and relators of the Kac-Moody algebra.

A similar result holds for complex Lie groups:

\begin{lemma}
\label{generation_of_l_alg_G_C}
If $G_{\mathbb{C}}$ is a simple, simply connected complex Lie group of rank $l$, then the $l+1$ subgroups $i_{\alpha}(SL_2(\mathbb{C}))$ corresponding to the simple affine roots generate $\widetilde{L_{alg}G_{\mathbb{C}}}$.
\end{lemma}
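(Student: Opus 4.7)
The plan is to mirror the proof of Lemma \ref{generation_of_l_alg_G} (the compact case), observing that the ingredients carry over unchanged to the complex setting, since the argument is essentially algebraic and depends only on the affine root datum. Let $H \subseteq \widetilde{L_{alg}G_{\mathbb{C}}}$ denote the subgroup generated by $i_{\alpha_0}(SL_2(\mathbb{C})), \ldots, i_{\alpha_l}(SL_2(\mathbb{C}))$; the goal is to show $H = \widetilde{L_{alg}G_{\mathbb{C}}}$. The argument will combine three standard ingredients: lifts of the affine Weyl group, conjugation of root subgroups, and the Bruhat decomposition.

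In more detail, I would proceed as follows. First, each simple affine reflection $s_{\alpha_i}\in W_{\textrm{aff}}$ has a lift $\tilde{s}_{\alpha_i}=i_{\alpha_i}\!\left(\bigl(\begin{smallmatrix} 0 & 1 \\ -1 & 0 \end{smallmatrix}\bigr)\right)$ lying in $H$; since the simple reflections generate $W_{\textrm{aff}}$, the group $H$ contains a representative of every Weyl element. Second, each $i_{\alpha_i}(SL_2(\mathbb{C}))$ contains the simple root subgroups $U_{\pm\alpha_i}$ (as upper/lower unipotents) and the simple coroot subgroup $\alpha_i^{\vee}(\mathbb{C}^*)$ (as diagonal matrices). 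Since every affine root is of the form $w(\alpha_i)$ with $w\in W_{\textrm{aff}}$ and $\alpha_i$ simple, and $wU_{\alpha_i}w^{-1}=U_{w(\alpha_i)}$, we obtain $U_\alpha\subset H$ for every affine root $\alpha$. Third, the positive Borel decomposes as $B^+=T\cdot U^+$, where $T$ is the maximal torus of $\widetilde{L_{alg}G_{\mathbb{C}}}$, and $U^+$ is the subgroup generated by the positive root subgroups. The torus $T$ is generated by the simple coroot subgroups $\alpha_i^{\vee}(\mathbb{C}^*)$ (this is where the central $\mathbb{C}^*$ of the extension is captured, via the affine relation $c=\sum a_i^{\vee}\alpha_i^{\vee}$ in the Cartan of the affine Kac-Moody algebra). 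Hence $B^+\subset H$, and the Bruhat decomposition $\widetilde{L_{alg}G_{\mathbb{C}}}=\coprod_{w\in W_{\textrm{aff}}}B^+wB^+$ together with Step 1 gives $H=\widetilde{L_{alg}G_{\mathbb{C}}}$.

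The main obstacle is the careful handling of the central $\mathbb{C}^*$\ndash extension: one must verify that the central element is actually produced by the simple coroot subgroups and not lost when passing from $L_{alg}G_{\mathbb{C}}$ to $\widetilde{L_{alg}G_{\mathbb{C}}}$. This relies on the fact that the $l+1$ simple coroots $\alpha_0^\vee,\ldots,\alpha_l^\vee$ span the non\ndash extended affine Cartan (which contains the central direction $c$). An alternative, more streamlined route is to appeal directly to the Steinberg\ndash type presentation of the algebraic Kac-Moody group: both the affine Kac-Moody algebra and its simply connected group are generated by the Chevalley generators $\{e_i,f_i,\alpha_i^{\vee}\mid i=0,\ldots,l\}$, all of which already sit inside the subgroups $i_{\alpha_i}(SL_2(\mathbb{C}))$. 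This also clarifies why no compactness hypothesis (as invoked in the Pressley–Segal reference) is needed here: the simple connectedness of $G_{\mathbb{C}}$ ensures that the group is generated by the exponentials of its Chevalley generators without torsion obstructions, and the remainder is purely root\ndash datum combinatorics.
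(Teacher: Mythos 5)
Your argument is correct, but it is not the route the paper takes: the paper offers no proof of this lemma at all, simply declaring it to be ``the loop group version of the description of complex affine Kac-Moody groups as the amalgam of its $SL_2(\mathbb{C})$ subgroups'' with citations to Caprace and R\'emy (and, for the compact analogue, to Pressley--Segal, Prop.~5.2.5). Your \emph{alternative} route at the end --- generation by the Chevalley generators $\{e_i,f_i,\alpha_i^\vee\}$ sitting inside the $i_{\alpha_i}(SL_2(\mathbb{C}))$ --- is essentially the amalgam/Steinberg-presentation fact the paper is invoking. Your \emph{primary} route is a genuinely different, self-contained Tits-system argument: Weyl lifts from the $SL_2$'s, all real root subgroups by conjugation, $B^+=T\cdot U^+$ with $T$ generated by the $l+1$ simple coroot subgroups (this is where simple connectedness and the relation $c=\sum a_i^\vee\alpha_i^\vee$ enter, correctly accounting for the central $\mathbb{C}^*$ of $\widetilde{L_{alg}G_{\mathbb{C}}}$ while, rightly, not claiming the rotation factor $d$), and then the Bruhat decomposition $\coprod_w B^+wB^+$ to conclude. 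This is non-circular, since the Bruhat decomposition for the algebraic Kac-Moody group is established independently of the generation statement. What your direct argument buys is transparency about where each hypothesis is used; what the paper's citation buys is brevity and in fact a stronger conclusion (a presentation as an amalgam, not merely generation). The only point worth flagging is that your step $B^+=T\cdot U^+$ with $U^+$ generated by the positive real root groups is a fact about the \emph{minimal} Kac-Moody group that itself requires a reference, but that is at the same level of rigor the paper adopts throughout.
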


This is the loop group version of the description of complex affine Kac-Moody groups as the amalgam of its $SL_2(\mathbb{C})$ subgroups --- cf.~\cite{Caprace09}, \cite{Remy02}.

Hence a quasi\ndash algebraic subgroup $\widehat{L_{qalg}G}^{\sigma}$ of a group $\widehat{L}(G, \sigma)$ is completely described by the choice of a maximal torus. An embedding $\varphi:\widehat{L_{alg}G}^{\sigma}\longrightarrow \widehat{L}(G, \sigma)$ is determined by the choice of an isomorphism of a torus and the family of isomorphisms of the $SU(2)$\ndash\ resp.\ $SL_2(\mathbb{C})$\ndash subgroups corresponding to the simple roots. Hence we get the following lemma:

 \begin{lemma}
  Let $G$ be a simple, simply connected compact or complex Lie group and let $\varphi_i:\widehat{L_{\textrm{alg}}G}^{\sigma}\longrightarrow \widehat{L}(G, \sigma), i=\{1,2\}$ be two embeddings, let $\widehat{T}$ be a maximal torus in $\widehat{L_{\textrm{alg}}G}^{\sigma}$ and $i_{\alpha}(SU(2))$ the subgroups associated to the simple roots.

If $\varphi_1(\widehat{T})=\varphi_2(\widehat{T})$ and  $\varphi_1(i_{\alpha}SU(2))=\varphi_2(i_{\alpha}SU(2)) \forall \alpha$, then $\varphi_1=\varphi_2$.
 \end{lemma}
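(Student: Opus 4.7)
The plan is to deduce the lemma from the generation Lemmas \ref{generation_of_l_alg_G} and \ref{generation_of_l_alg_G_C}, using the standard fact that a group homomorphism is determined by its restriction to any generating set. Thus the heart of the proof consists in showing that $\widehat{T}$ together with the simple root subgroups $i_\alpha(SU(2))$ (resp.\ $i_\alpha(SL_2(\mathbb{C}))$) generate $\widehat{L_{\textrm{alg}}G}^{\sigma}$. I interpret the hypothesis in the sense of the preceding discussion in the section, namely that the restrictions $\varphi_1|_{\widehat{T}}=\varphi_2|_{\widehat{T}}$ and $\varphi_1|_{i_\alpha(SU(2))}=\varphi_2|_{i_\alpha(SU(2))}$ agree as maps; equality of images as subgroups alone would plainly not suffice, since for instance conjugation by any torus element preserves all these subgroups setwise.

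First I apply Lemma \ref{generation_of_l_alg_G} in the compact case or Lemma \ref{generation_of_l_alg_G_C} in the complex case: the $l+1$ rank-one subgroups associated to the simple affine roots generate the central extension $\widetilde{L_{\textrm{alg}}G}^{\sigma}$. The remark following the first generation lemma extends this to the twisted setting, since the argument uses only the algebraic Serre-type relations of the Kac-Moody algebra. To pass from $\widetilde{L_{\textrm{alg}}G}^{\sigma}$ to $\widehat{L_{\textrm{alg}}G}^{\sigma}$ one adjoins the rotational $S^1$ (resp.\ $\mathbb{C}^*$) of the semidirect product construction recalled earlier, and this one-parameter subgroup, generated infinitesimally by the derivation $d$, is a coordinate factor of the standard maximal torus $\widehat{T}\cong T\times S^1_c\times S^1_d$ of Kac-Moody type. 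Hence $\widehat{T}\cup\bigcup_\alpha i_\alpha(SU(2))$ is a generating set for $\widehat{L_{\textrm{alg}}G}^{\sigma}$, and agreement of $\varphi_1$ and $\varphi_2$ on this set forces $\varphi_1=\varphi_2$.

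The main technical point requiring brief verification is that the rotational subgroup really lies inside the specific maximal torus $\widehat{T}$ chosen in the statement. Since all maximal tori of finite type in $\widehat{L_{\textrm{alg}}G}^{\sigma}$ are conjugate, one may after conjugation assume $\widehat{T}$ is the standard one, for which the containment is tautological; this replacement does not alter the problem because the hypotheses and conclusion transform equivariantly under inner automorphisms, and the corresponding simple root subgroups transform along with $\widehat{T}$.
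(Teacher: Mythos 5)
Your proof is correct and follows essentially the same route as the paper's: both reduce the claim to the fact that a homomorphism is determined on a generating set and then invoke Lemmas \ref{generation_of_l_alg_G} and \ref{generation_of_l_alg_G_C}. You additionally (and rightly) flag that the hypothesis must be read as equality of restrictions rather than mere equality of images, and you fill in the passage from $\widetilde{L_{\textrm{alg}}G}^{\sigma}$ to $\widehat{L_{\textrm{alg}}G}^{\sigma}$ via the rotational factor inside $\widehat{T}$ — points the paper's terser proof leaves implicit.
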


\begin{proof}
Two group isomorphism are equivalent iff they coincide on a set of generators. By lemma~\ref{generation_of_l_alg_G} (for compact $G$) resp.\ \ref{generation_of_l_alg_G_C} (for complex $G$), $\widehat{L_{alg}G}^{\sigma}$ is generated by a torus and the $SU(2)$\ndash\ resp.\ $SL_2( \mathbb{C})$\ndash subgroups associated to the simple roots. This proves the lemma.
\end{proof}

We are now in a position to prove theorem~\ref{algebraic_subgroups}:
\begin{proof}
 Let $G_i \subset \widehat{L}(G, \sigma), i \in \{1,2\}$ be two quasi\ndash algebraic subgroups of $\widehat{L}(G, \sigma)$. Choose two tori of finite type $\widehat{T}_1\subset G_1$ and $\widehat{T}_2\subset G_2$. By the conjugation property, there is $g\in \widehat{L}(G, \sigma)$ such that $\widehat{T}_1= g \widehat{T}_2 g^{-1}$.
Define $H_2:= g G_2 g^{-1}$. The groups $G_1$ and $H_2$ share the maximal torus $\widehat{T}_1$. Hence they have the same root system with respect to $\widehat{T}_1$. Choose in both groups the same system of simple roots. The $SU(2)$ resp.\ $SL_2(\mathbb{C})$\ndash subgroups associated to those simple roots are well defined. Hence they coincide. Thus by lemma~\ref{generation_of_l_alg_G}, $G_1$ and $H_2$ coincide. Thus $G_1$ and $G_2$ are conjugate.
\end{proof}

Next we investigate Borel subgroups and parabolic subgroups.

\begin{definition}[countably solvable subgroup]
 A subgroup $S\subset \widehat{L}(G, \sigma)$ is called countably solvable if the upper central series converges to the identity.
\end{definition}

\begin{definition}
A Borel group of $\widehat{L}(G,\sigma)$ is a maximal countably solvable subgroup of $\widehat{L}(G,\sigma)$.
\end{definition}

Other definitions are proposed in \cite{Remy02} and \cite{Kumar02}.

As usual, two Borel subgroups $B^+$ and $B^-$ in a Kac-Moody group $\widehat{L}(G, \sigma)$ are opposite, iff $B^+ \cap B^-\simeq \widehat{T}$. Call $\widehat{N}$ the normalizor of $\widehat{T}$ and put $W=\widehat{N}/\widehat{T}$. $\widehat{N}$ and hence also $W$ are independent of the regularity of $\widehat{L}(G, \sigma)$.

Let $B^+$ and $B^-$ be two opposite Borel subgroups. Then they describe a unique quasi-algebraic subgroup $G(B^+, B^-) \subset \widehat{L}(G, \sigma)$: We construct this group by taking the torus $\widehat{T}=B^+\cap B^-$ and taking the group generated by $\widehat{T}$ and the simple root groups. This group has a twin $BN$\ndash pair consisting of $(B^+\cap G(B^+, B^-),B^-\cap G(B^+, B^-), N, W, S)$.

Hence, given a pair of opposite Borel subgroups, we get in each of the two Borel subgroup a Borel subgroup corresponding to the quasi-algebraic group $G(B^+, B^-)$. We call this Borel subgroup the algebraic kernel of $B^{\epsilon}$ ($\epsilon \in \pm$) with respect to $B^{-\epsilon}$.

More generally, let $B$ be a Borel group. As $B$ is a countably solvable group, we get a series of subgroups $B^{(i+1)}=[B^{(i)},B^{(i)}]$. Those groups are well defined. But we don't get complementary subspaces. Those have to be chosen explicitly. We denote by $T^{(i)}$ a series of subspaces such that $B^{(i+1)}\simeq T^{(i)}\oplus B^{(i)}$.

$T^{(0)}$ is just a maximal torus of finite type. $T^{(0)}$ uniquely determines a set of root subgroups. Define $T^{(1)}$ to be the union of the root subgroups of simple roots and similarly for $T^{(i)}:=\{ab|a,b\in T^{(i-1)}\}\cap B^{(i+1)}, i>0$. From now on, by $T^{(i)}$ we will denote this set of subspaces.

\begin{definition}[$T$\ndash algebraic subgroup]
 The $T$\ndash algebraic subgroup $B_{alg}^T$ of a Borel subgroup $B$ with respect to a maximal Torus $T$ is the subgroup of elements $f\subset B$ such that there exist some $n\in \mathbb{N}$ such that $f\subset \displaystyle\bigcup_{i=0}^n T^{(i)}$.
\end{definition}

\begin{lemma}
 Let $G(B^+, B^-)$ be a quasi-algebraic subgroup of $\widehat{L}(G, \sigma)$ and $T$ a torus in $G(B^+, B^-)$  that is a complement to $B^{+,(1)}$ in $B^+$. Then
$$B^+\cap G(B^+, B^-)= B_{alg}^{+,T}\,.$$
\end{lemma}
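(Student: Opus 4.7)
The plan is to identify both sides of the equality with the subgroup generated by the torus $T$ and the positive root subgroups $U_\alpha$ ($\alpha > 0$) of the quasi-algebraic group $G(B^+,B^-)$, and then argue that bounded depth in the derived series of $B^+$ corresponds exactly to being expressible as a finite word in these generators.

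I would first treat the inclusion $B^+ \cap G(B^+,B^-) \subseteq B^{+,T}_{\mathrm{alg}}$. Since $G(B^+,B^-)$ is quasi\ndash algebraic with twin $BN$\ndash pair $(B^+\cap G,B^-\cap G,N,W,S)$, the intersection $B^+ \cap G(B^+,B^-)$ is an algebraic positive Borel subgroup. By Lemma~\ref{generation_of_l_alg_G} (respectively Lemma~\ref{generation_of_l_alg_G_C}), $G(B^+,B^-)$ is generated by $T$ together with the rank-one subgroups attached to the simple affine roots; restricting to the positive side, the positive Borel is generated by $T$ together with the simple positive root subgroups $U_{\alpha_i}$. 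Any element $f \in B^+ \cap G(B^+,B^-)$ is therefore a finite word in these finitely many generators, and the commutators involved only produce root subgroups $U_\beta$ for positive roots $\beta$ of bounded height. Since $U_\beta$ sits in some derived step $B^{(h)}$, and hence meets a fixed $T^{(i)}$ with $i \le h$, the word uses only elements from $\bigcup_{i \le n} T^{(i)}$ for some finite $n$; this is precisely the definition of $B^{+,T}_{\mathrm{alg}}$.

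For the reverse inclusion $B^{+,T}_{\mathrm{alg}} \subseteq B^+ \cap G(B^+,B^-)$, I would argue by induction on the index $i$ that every $T^{(i)}$ is contained in $G(B^+,B^-)$. The base cases are immediate from the setup: $T^{(0)} = T \subset G(B^+,B^-)$ by hypothesis, and $T^{(1)}$ is by definition the union of the simple root subgroups, which lie in $G(B^+,B^-)$ by the generation lemma. For $i \ge 2$ the recursive definition $T^{(i)} = \{ab \mid a,b \in T^{(i-1)}\} \cap B^{(i+1)}$ immediately gives $T^{(i)} \subset \langle T^{(i-1)}\rangle \subseteq G(B^+,B^-)$ by induction. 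Since each element of $B^{+,T}_{\mathrm{alg}}$ is a product of elements from finitely many $T^{(i)}$, it lies in $G(B^+,B^-)$. Membership in $B^+$ is built into the definition of $B^{+,T}_{\mathrm{alg}}$, so the desired containment follows.

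The main obstacle is making the depth bookkeeping precise: one must verify that the filtration $(T^{(i)})_{i \ge 0}$ really exhausts all the positive root subgroups of $G(B^+,B^-)$, so that nothing in the algebraic Borel is missed on the $B^+ \cap G(B^+,B^-) \subseteq B^{+,T}_{\mathrm{alg}}$ side. This reduces to checking that for every positive root $\alpha$, the subgroup $U_\alpha$ meets some $T^{(i)}$ nontrivially, which in turn follows from the fact that, on the Lie algebra level, $\mathfrak b^+/[\mathfrak b^+,\mathfrak b^+]\simeq \mathfrak t$ and each successive quotient $\mathfrak b^{(i)}/\mathfrak b^{(i+1)}$ is a direct sum of root spaces $\mathfrak g_\alpha$ with $\alpha$ of derived depth exactly $i$; the torus $T$ being of finite type guarantees this grading is unambiguous and hence exponentiates to the claimed identification.
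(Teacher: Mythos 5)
Your proof is correct in substance but takes a genuinely different route from the paper's. The paper argues indirectly: it reduces the lemma to the claim that there is an opposite Borel subgroup $\widetilde{B}^-$ with $T=B^+\cap \widetilde{B}^-$ and $G(B^+,\widetilde{B}^-)=G(B^+,B^-)$, constructs $\widetilde{B}^-$ as the completion of the negative root subgroups determined by $T$, and then declares that the equality "follows from the definitions." You instead identify both sides directly with the subgroup generated by $T$ and the positive root subgroups, using the generation lemmas for one inclusion and an induction on the filtration $T^{(i)}$ for the other. What the paper's detour buys is precisely the point you pass over quickly: a priori the quasi-algebraic group $G(B^+,B^-)$ and its simple root subgroups are defined relative to the torus $\widehat{T}=B^+\cap B^-$, which need not equal the given $T$; exhibiting $T$ as $B^+\cap\widetilde{B}^-$ for an opposite Borel with the \emph{same} quasi-algebraic group is what guarantees that the root subgroups determined by $T$ (and hence the filtration $T^{(i)}$ built from it) are the algebraic root subgroups of $G(B^+,B^-)$ rather than some completed version. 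In your argument this is absorbed into the base case "$T^{(1)}$ lies in $G(B^+,B^-)$ by the generation lemma"; it does hold, because $T$ is of finite type and contained in $G(B^+,B^-)$ so its real root subgroups are the one-parameter subgroups of that algebraic subgroup, but you should say so explicitly. What your approach buys in return is that it makes concrete the depth bookkeeping the paper leaves implicit — in particular the verification that every positive root subgroup is caught by some $T^{(i)}$ via the grading of the derived series of $\mathfrak{n}^+$ by height — and it is self-contained, not relying on the existence and uniqueness statements for opposite Borel subgroups. Given how terse the paper's own proof is, your version is arguably the more complete of the two.
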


\begin{proof}
 Let $T$ be a torus in $G(B^+, B^-)$ as described in the lemma. We claim: There is a Borel subgroup $\widetilde{B}^{-}$ such that $T=B^+\cap \widetilde{B}^-$ and $G(B^+,  \widetilde{B}^-)=G(B^+, B^{-})$. Then the lemma follows from the definitions. 
Hence we are left with proving our claim: We construct $\widetilde{B}^-$ explicitly to be the completion of  the negative root subgroups.
\end{proof}

\begin{lemma}
All positive (resp.\ negative) Borel subgroups $B^+$ (resp.\ $B^-$) of $\widehat{L}(G, \sigma)$ are conjugate.
\end{lemma}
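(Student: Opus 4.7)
The plan is to use the two available conjugation tools --- the conjugation property for tori of finite type and conjugacy inside the algebraic Kac-Moody group --- to reduce $B_1^+$ and $B_2^+$ first to the case of a common torus of finite type, then to equal algebraic kernels, and finally to conclude that the full Borels coincide.

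For the setup, each positive Borel $B_i^+$ admits some opposite negative Borel $B_i^-$, whose intersection $\widehat{T}_i:=B_i^+\cap B_i^-$ is a maximal torus of finite type. By the conjugation property assumed for $\widehat{L}(G,\sigma)$ there is $h\in\widehat{L}(G,\sigma)$ with $h\widehat{T}_1h^{-1}=\widehat{T}_2$; after replacing $(B_1^+,B_1^-)$ by its conjugate under $h$, both Borels contain a common torus $\widehat{T}$. The quasi-algebraic subgroups $G_i:=G(B_i^+,B_i^-)$ are generated by $\widehat{T}$ together with the simple root $SU(2)$- (resp.\ $SL_2(\mathbb{C})$-) subgroups determined by $\widehat{T}$; arguing exactly as in the proof of theorem~\ref{algebraic_subgroups} after the tori have been aligned, this forces $G_1=G_2=:G$. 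By the preceding lemma the algebraic kernels $B_i^+\cap G$ are positive Borel subgroups of the algebraic Kac-Moody group $G\cong\widehat{L_{\textrm{alg}}G}^{\sigma}$ sharing the torus $\widehat{T}$, and the classical affine Kac-Moody theory (the Bruhat decomposition inside $G$, together with the transitive action of the Weyl group on positive systems of real affine roots) produces $n\in\widehat{N}=N(\widehat{T})$ with $n(B_1^+\cap G)n^{-1}=B_2^+\cap G$.

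The main obstacle is the final step: lifting this equality of algebraic kernels to an equality $nB_1^+n^{-1}=B_2^+$ inside $\widehat{L}(G,\sigma)$. The argument I would use is that both $nB_1^+n^{-1}$ and $B_2^+$ contain the common torus $\widehat{T}$ together with the same positive real root subgroups $U_\alpha$ (each of which already sits in the algebraic kernel, so the positive root system is detected there), and each is a maximal countably solvable subgroup of $\widehat{L}(G,\sigma)$. Since a positive Borel is determined by its torus and its positive root system --- being in essence the closure of the product of $\widehat{T}$ with the positive root subgroups, and countably solvable precisely because the positive system is closed under addition --- maximality of both $nB_1^+n^{-1}$ and $B_2^+$ forces them to coincide, which completes the proof.
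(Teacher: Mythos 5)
Your route is genuinely different from the paper's: the paper proves the lemma by realizing Borel subgroups as stabilizers of periodic flags in a suitable representation space and invoking the transitivity of $L(G,\sigma)$ on the space of periodic flags (following \cite{PressleySegal86}, sections~7 and~8), i.e.\ it works directly with the completed group, whereas you try to reduce the question to the algebraic kernel via the conjugation property for tori of finite type. Your reduction has two gaps. The smaller one is at the very start: you assume that an arbitrary positive Borel subgroup of $\widehat{L}(G,\sigma)$ admits an opposite negative Borel whose intersection with it is a maximal torus of finite type. In the completed setting this is not automatic (a maximal countably solvable subgroup is given abstractly, not as $B^+=\widehat{T}\ltimes U^+$), and nothing in the paper up to this point establishes it; note also that the lemma as stated does not carry the conjugation-property hypothesis, which your argument needs.

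The serious gap is the final lifting step, which you yourself flag as the main obstacle. Knowing that $nB_1^+n^{-1}$ and $B_2^+$ are both maximal countably solvable and both contain $\widehat{T}$ together with the same positive real root subgroups does not force them to coincide: maximality only prevents either from sitting properly inside a larger countably solvable subgroup, and two \emph{distinct} maximal countably solvable subgroups can perfectly well share a common subgroup. To conclude you would need uniqueness of the maximal countably solvable subgroup containing $\widehat{T}$ and the chosen positive root system --- equivalently, that a completed Borel subgroup is determined by (indeed, is the closure of) its algebraic kernel. That is precisely the delicate point in the analytic category: the ``completion tail'' of a Borel is exactly what the root subgroups do not detect, and the paper's whole discussion of $T$\ndash algebraic subgroups exists because this relationship is nontrivial. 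You assert this determination (``being in essence the closure of the product of $\widehat{T}$ with the positive root subgroups'') rather than prove it, so the argument is circular at its crux. A further, smaller caveat: the affine Weyl group acts transitively only on the chambers of one sign, so the step producing $n\in N(\widehat{T})$ silently uses that both algebraic kernels lie in the same $W$\ndash orbit of Borels containing $\widehat{T}$, i.e.\ it presupposes a workable notion of ``positive'' for abstract maximal countably solvable subgroups.
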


In the case $GL(n,\mathbb{C)}$, the proof proceeds as follows: Each Borel subgroup fixes a unique maximal flag. Hence the result follows as $GL(n, \mathbb{C})$ is transitive on maximal flags. If $G\subset GL(n,\mathbb{C})$ the proof follows using the embedding.

\begin{proof}
The proof studies the action of $L(G, \sigma)$ on a suitable vector space and shows, that it is transitive on spaces of periodic flags.
For details cf.~\cite{PressleySegal86}, section~7 and 8 and \cite{Freyn10b}.
\end{proof}

We summarize our results:
Each pair of two opposite Borel subgroups $B^{\pm}$ defines exactly one maximal torus and hence a quasi-algebraic subgroup of $\widehat{L}(G, \sigma)$. We denote this group $G(B^+, B^-)$. Quasi algebraic subgroups, such that a given Borel group $B$ is the completion of an algebraic Borel subgroup $B_{alg}$, contain a torus which is a complement to $B^{(1)}$ in $B$.

\section{Twin cities}
\label{section:twin_cities}

In this section we construct cities associated to simple geometric affine Kac-Moody algebras $\widehat{L}(\mathfrak{g}, \sigma)$ and their Kac-Moody groups $\widehat{L}(G, \sigma)$.

There are two major obstacles:
\begin{enumerate}
\item Twin buildings correspond only to the subgroup of algebraic loops, or taking into account that the subgroup of algebraic loops is just one distinguished element in the conjugacy class of quasi-algebraic groups, quasi-algebraic groups.
\item The completions of Kac-Moody groups do not act properly on twin buildings.
\end{enumerate}

To resolve those problems, we define geometric $BN$\ndash pairs and their associated cities. Those cities are chamber complexes such that $\mathfrak{B}^+$ and $\mathfrak{B}^-$ each consist of an infinite number of connected components, each of which is an affine building, such that each pair consisting of a building $\Delta^+$ in $\mathfrak{B}^+$ and a building $\Delta^-$ in $\mathfrak{B}^-$ is a twin building in the classical algebraic sense.

\subsection{Geometric \boldmath $BN$\ndash pairs}

\begin{definition}[Geometric $BN$\ndash pair for $\widehat{L}(G,\sigma)$]

Let $\widehat{L}(G,\sigma)$ be an affine Kac-Moody group. 
$(B^+, B^-, N, W, S)$  is a twin $BN$\ndash pair for $\widehat{L}(G,\sigma)$ iff there are subgroups $\widehat{L}(G, \sigma)^+$ and $\widehat{L}(G, \sigma)^-$ of $\widehat{L}(G,\sigma)$ such that $\widehat{L}(G,\sigma)=\langle \widehat{L}(G, \sigma)^+, \widehat{L}(G, \sigma)^- \rangle$ is subject to the following axioms:
\begin{enumerate}
	\item $(B^+, N, W, S)$ is a $BN$\ndash pair for $\widehat{L}(G,\sigma)^+$ (called $B^+N$),	
	\item $(B^-, N, W, S)$ is a $BN$\ndash pair for $\widehat{L}(G,\sigma)^-$ (called $B^-N$),
	\item $(B^+\cap \widehat{L}(G, \sigma)^- , B^-\cap \widehat{L}(G, \sigma)^+, N, W, S)$ is a twin $BN$\ndash pair for $\widehat{L}(G, \sigma)^+\cap \widehat{L}(G, \sigma)^-$.
\end{enumerate}
\end{definition}

The subgroups $\widehat{L}(G, \sigma)^+$ and $\widehat{L}(G, \sigma)^-$ of $\widehat{L}(G,\sigma)$ depend on the choice of $B^+$ and $B^-$. A choice of a different subgroup $B^{+'}$ (resp.\  $B^{-'}$) gives the same  subgroup $\widehat{L}(G, \sigma)^+$ (resp.\ $\widehat{L}(G, \sigma)^-$) of $\widehat{L}(G,\sigma)$ if $B^{+'}\subset \widehat{L}(G, \sigma)^+$ (resp.\ $B^{-'}\subset \widehat{L}(G,\sigma)$ if $B^{-'}\subset \widehat{L}(G, \sigma)^-)$. For all positive (resp.\ negative) Borel subgroups the positive (resp.\ negative) subgroups $\widehat{L}(G, \sigma)^+$ resp.\ $\widehat{L}(G, \sigma)^-$ are conjugate. Hence without loss of generality we can think of $B^{\pm}$ to be the standard positive (resp.\ negative) affine Borel subgroup. The groups $\widehat{L}(G, \sigma)^{\pm}$ --- called the standard positive (resp.\ negative) subgroups --- are then characterized by the condition that $0$ (resp.\ $\infty$) is of finite order for all elements.

\begin{remark}
For an algebraic Kac-Moody group a geometric $BN$\ndash pair coincides with a $BN$\ndash pair. Hence we get $\widehat{L}(G,\sigma)^+ =\widehat{L}(G,\sigma)^-=\widehat{L}(G,\sigma)$.
\end{remark}

We use the equivalent definitions for the loop groups $L(G, \sigma)$.

The $W$\ndash metric description of buildings shows that the structure of a twin building is intimately related to the Bruhat and the Bruhat twin decomposition. For completed Kac-Moody groups, those decompositions need no longer be globally defined. This new feature is crucial for the disconnected structure of cities.

\begin{lemmas}~
\label{bruhatforloopgroups}
\begin{enumerate}
 	\item The groups $L(G,\sigma)^+$ (resp.\ $L(G,\sigma)^-$) have a positive (resp.\ negative) Bruhat decomposition and a Bruhat twin decomposition.
	\item The group $L(G,\sigma)$ has a Bruhat twin decomposition but no Bruhat decomposition.
\end{enumerate}
\end{lemmas}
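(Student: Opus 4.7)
The plan is to exploit the explicit characterization of $\widehat{L}(G,\sigma)^\pm$ given just above the lemma: these are the subgroups in which $0$ (respectively $\infty$) is of finite order for every element. Concretely, every element of $L(G,\sigma)^+$ has a Laurent expansion with only finitely many negative Fourier modes, so it lies in some quasi\ndash algebraic subgroup of $\widehat{L}(G,\sigma)$ sharing the standard positive Borel $B^+$; dually for $L(G,\sigma)^-$. The central/torus extension plays no essential role in the decomposition question, so I would work with the loop groups throughout.

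For part (1), given $g \in L(G,\sigma)^+$, the subgroup generated by $g$, by $B^+$, and by the standard torus of finite type is contained in some quasi\ndash algebraic subgroup sharing $B^+$, by theorem~\ref{algebraic_subgroups} together with the generation lemmas~\ref{generation_of_l_alg_G} and~\ref{generation_of_l_alg_G_C}. Inside this quasi\ndash algebraic subgroup the algebraic Bruhat and Bruhat twin decompositions hold verbatim, so $g$ lies in a unique double coset $B^+ w B^+$, respectively $B^+ w B^-$. Taking the union over $g$ and checking that the $w$\ndash valued invariant is independent of the chosen quasi\ndash algebraic subgroup yields global decompositions of $L(G,\sigma)^+$; the required independence follows because $W=N/T$ is independent of the regularity, as noted in section~\ref{section:regularity}. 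The case of $L(G,\sigma)^-$ is symmetric by swapping the roles of $0$ and $\infty$.

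For part (2), the Bruhat twin decomposition $L(G,\sigma) = \bigsqcup_{w\in W} B^+ w B^-$ is essentially the Birkhoff factorization theorem of \cite{PressleySegal86}, chapter~8: every analytic loop admits a decomposition $f = f_- \cdot w \cdot f_+$ with $f_- \in B^-$, $w \in W$, and $f_+ \in B^+$, and the summands are disjoint because the Weyl piece $w$ is determined by $f$. The ordinary Bruhat decomposition must on the other hand fail: writing $g = b_1 w b_2$ with $b_i \in B^+$ would force the negative Fourier tail of $g$ to be controlled in terms of $w$, since every element of $B^+$ has only finitely many negative modes. But $L(G,\sigma)$ contains elements of $L(G,\sigma)^- \setminus L(G,\sigma)^+$ with unbounded pole order at $0$, and such loops cannot lie in any $B^+ w B^+$.

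The main obstacle is the non\ndash existence step in part (2): one must verify rigorously that the pole\ndash order at $0$ is a two\ndash sided $B^+$\ndash invariant once the Weyl element $w$ is fixed, which requires tracking how left and right multiplication by $B^+$ combined with the conjugation action of $w$ affects the Laurent expansion at $0$. This should follow from the explicit description of $B^+$ as a subgroup of loops regular at $0$, but the interaction with the shift factor that defines the semidirect structure of $\widehat{L}(G,\sigma)$ makes the bookkeeping delicate and is where I expect the proof to require the most care.
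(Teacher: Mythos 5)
Your part (2) matches the paper: the twin decomposition is exactly the Birkhoff factorization of \cite{PressleySegal86}, chapter~8, and the failure of the global Bruhat decomposition comes down to the fact that $\coprod_{w}B^{+}wB^{+}$ exhausts only the proper subgroup $L(G,\sigma)^{+}$; your pole\nobreakdash-order argument is a legitimate way of seeing that an element of $L(G,\sigma)^{-}\setminus L(G,\sigma)^{+}$ cannot be written as $b_{1}wb_{2}$ with $b_{i}\in B^{+}$ and $w$ a (finite Laurent) representative of the Weyl group. The bookkeeping you worry about at the end is not really an issue, since the lemma is stated for the loop group $L(G,\sigma)$ and the $c$- and $d$-extensions play no role here.

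Part (1) is where you diverge from the paper, and your route has a genuine gap. The paper's proof of the positive Bruhat decomposition of $L(G,\sigma)^{+}$ is immediate: axiom~1 of the definition of a geometric $BN$-pair asserts that $(B^{+},N,W,S)$ is a $BN$-pair for $\widehat{L}(G,\sigma)^{+}$, and standard $BN$-pair theory then gives $\widehat{L}(G,\sigma)^{+}=\coprod_{w}B^{+}wB^{+}$; the twin decomposition of $L(G,\sigma)^{\pm}$ is obtained by restricting the global Birkhoff decomposition of part (2) to the subgroup. Your argument instead claims that for $g\in L(G,\sigma)^{+}$ the subgroup generated by $g$, $B^{+}$ and the standard torus lies inside a quasi-algebraic subgroup sharing $B^{+}$. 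This is false: a quasi-algebraic subgroup is by definition isomorphic to $L_{\textrm{alg}}G^{\sigma}$, whereas $B^{+}$ is a \emph{completed} Borel subgroup; only its algebraic kernel $B^{+}\cap G(B^{+},B^{-})=B_{alg}^{+,T}$ sits inside a quasi-algebraic subgroup (see the lemma on $T$-algebraic subgroups in section~\ref{section:regularity}). So no quasi-algebraic subgroup contains $B^{+}$, theorem~\ref{algebraic_subgroups} cannot be invoked in the way you propose, and the union-over-$g$ step never gets off the ground. If you want to avoid the quasi-algebraic detour entirely, use the $BN$-pair axiom directly as the paper does.
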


\begin{proof}
The Bruhat decomposition in the first part follows by definition, the Bruhat twin decomposition by restriction and the second part.
The second part is a restatement of the decomposition results in chapter~8  of~\cite{PressleySegal86}. 
\end{proof}

Compare also similar decomposition results stated in \cite{Tits84}.

\begin{theorem}[Bruhat decomposition]
Let $\widehat{L}(G, \sigma)$ be an affine Kac-Moody group with affine Weyl group $W_{\textrm{aff}}$.
Let furthermore $B^\pm$ denote a positive (resp.\ negative) Borel group. There are decompositions
\begin{align*}
\widehat{L}(G, \sigma)^+&=\coprod_{w\in W_{\textrm{aff}}}B^+wB^+\vspace{-3pt}
\intertext{and}\vspace{-3pt}
\widehat{L}(G, \sigma)^-&=\coprod_{w\in W_{\textrm{aff}}}B^-wB^-\,.
\end{align*}
\end{theorem}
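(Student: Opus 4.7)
The plan is to read the decomposition straight off the axioms of a geometric $BN$\ndash pair. By clause~(1) of that definition, $(B^+,N,W,S)$ is a genuine $BN$\ndash pair for the subgroup $\widehat{L}(G,\sigma)^+$, and by clause~(2) the same holds for $(B^-,N,W,S)$ inside $\widehat{L}(G,\sigma)^-$. The classical Bourbaki-style theorem \textquotedblleft{}a group with a $BN$\ndash pair decomposes as the disjoint union of its $BwB$ double cosets\textquotedblright{} then applies verbatim to $\widehat{L}(G,\sigma)^\pm$, and gives the claim.

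In a little more detail, I would carry out the standard argument for $\widehat{L}(G,\sigma)^+$: first establish the key multiplication rule $B^+sB^+\cdot B^+wB^+\subset B^+swB^+\cup B^+wB^+$ for $s\in S,w\in W_{\textrm{aff}}$ (a $BN$\ndash pair axiom); from this deduce that $\bigcup_{w\in W_{\textrm{aff}}}B^+wB^+$ is stable under multiplication and under inverses; since it contains $B^+$ and a set of representatives of $N/(B^+\cap N)$, and since $\widehat{L}(G,\sigma)^+=\langle B^+,N\rangle$ by the $BN$\ndash pair axioms, this union already exhausts $\widehat{L}(G,\sigma)^+$. Disjointness of the double cosets is the usual induction on $\ell(w)$, with base case provided by the axiom $B^+s\cap B^+=\emptyset$ for $s\in S$. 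The same verbatim argument, with every $+$ replaced by $-$, delivers the Bruhat decomposition of $\widehat{L}(G,\sigma)^-$.

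Alternatively, one can bypass re-proving the standard theorem and instead bootstrap from Lemma~\ref{bruhatforloopgroups}(1), which records the decomposition at the level of the uncentralized loop group $L(G,\sigma)^+$. Since the maximal torus $\widehat T$ of the central $\mathbb{C}^{\ast}$\ndash\,(or $S^1$\ndash)\,extension plus the rotation $S^1$ is already absorbed into $B^\pm$, every element of $\widehat{L}(G,\sigma)^+$ projects to a well-defined $B^+wB^+$\ndash class in $L(G,\sigma)^+$, and the preimage of such a class is exactly a single $B^+$\ndash$B^+$ double coset in the extension; disjointness is inherited.

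I do not expect any genuine obstacle: the entire substance of the statement has been packaged into the definition of a geometric $BN$\ndash pair and into Lemma~\ref{bruhatforloopgroups}. The only thing requiring care is bookkeeping, namely checking that the central and rotational factors $\mathbb{C}^{\ast}\times\mathbb{C}^{\ast}$ inside $\widehat{L}(G,\sigma)^+$ indeed lie inside $B^+$ so that the lift from $L(G,\sigma)^+$ to $\widehat{L}(G,\sigma)^+$ is clean — but this is true by the very construction of the standard positive Borel of Section~\ref{section:regularity}. The parallel statement in the negative direction is completely symmetric. In particular, no Bruhat twin decomposition is invoked here, reflecting the fact that the groups $\widehat{L}(G,\sigma)^\pm$ are by design the maximal subgroups on which a one-sided Bruhat decomposition survives the completion.
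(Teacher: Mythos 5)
Your proposal is correct and follows essentially the same route as the paper: the paper's proof is a one-line reduction to Lemma~3.1 (whose own proof says the Bruhat decomposition ``follows by definition'' of the geometric $BN$\ndash pair), which is exactly your primary argument of reading the decomposition off the $BN$\ndash pair axioms for $\widehat{L}(G,\sigma)^{\pm}$, with your second route (lifting from $L(G,\sigma)^{\pm}$ through the torus extension) matching the paper's citation of the lemma verbatim.
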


\begin{proof}
This is a consequence of lemma~\ref{bruhatforloopgroups}.
\end{proof}

\begin{theorem}[Bruhat twin decomposition]
Let $\widehat{L}(G, \sigma)$ be an affine algebraic Kac-Moody group with affine Weyl group $W_{\textrm{aff}}$.
Let furthermore $B^\pm$ denote a positive and its opposite negative Borel group. There are two decompositions
 $$\widehat{L}(G,\sigma)=\coprod_{w\in W_{\textrm{aff}}}B^{\pm}wB^{\mp}\,.$$ 
\end{theorem}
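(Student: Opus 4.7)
My plan is to bootstrap from lemma~\ref{bruhatforloopgroups}(2), which supplies the Bruhat twin decomposition for $L(G,\sigma)$, by absorbing the two extra torus dimensions of $\widehat{L}(G,\sigma)$ into the Borel subgroups $B^\pm$. The key observation is that the central $\mathbb{F}^{\ast}$-factor generated by $c$ together with the rotational $\mathbb{F}^{\ast}$-factor generated by $d$ form a two-dimensional torus extension contained in the extended maximal torus $\widehat{T}\subseteq B^+ \cap B^-$.

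For existence, given $\hat{g}\in\widehat{L}(G,\sigma)$, I would write $\hat{g}=g\cdot \tau$ with $g\in L(G,\sigma)$ and $\tau$ in the two-dimensional torus extension, then apply lemma~\ref{bruhatforloopgroups}(2) to obtain $g=b_1^{\pm}wb_2^{\mp}$ with $b_i^{\pm}\in B^{\pm}\cap L(G,\sigma)$. Since $\tau\in\widehat{T}\subseteq B^{\mp}$, the product $b_2^{\mp}\tau$ lies in $B^{\mp}$, so $\hat{g}=b_1^{\pm}w(b_2^{\mp}\tau)\in B^{\pm}wB^{\mp}$.

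For disjointness of the double cosets, I would pass to the quotient by the torus extension. Two elements $b_1^{\pm}wb_2^{\mp}$ and $c_1^{\pm}w'c_2^{\mp}$ of $\widehat{L}(G,\sigma)$ representing the same group element project to equal representatives in $\widehat{L}(G,\sigma)/\widehat{T}\cong L(G,\sigma)/T$, and the induced Bruhat twin decomposition there inherits uniqueness from lemma~\ref{bruhatforloopgroups}(2): the Weyl parameter $w$ is unchanged when $\widehat{T}$ is absorbed on both sides, since $\widehat{T}\subseteq B^{\pm}$ and $\widehat{T}$ is normalized by $\widehat{N}$. This forces $w=w'$.

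The main technical obstacle is making the quotient argument precise, because the semidirect factor generated by $d$ acts nontrivially on $L(G,\sigma)$, so $\widehat{L}(G,\sigma)$ is not a direct product of $L(G,\sigma)$ with a torus. This is handled by verifying that the $d$-rotation stabilizes both $B^+$ and $B^-$ --- their constituent root subgroups are permuted among themselves under the rotation action and the positive/negative sign is preserved --- so the double coset partition descends cleanly to $L(G,\sigma)$ and is compatible with the central/semidirect structure on $\widehat{L}(G,\sigma)$.
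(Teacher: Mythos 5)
Your argument is correct and follows the same route the paper intends: the theorem is deduced from lemma~\ref{bruhatforloopgroups}(2) (itself the Birkhoff decomposition of \cite{PressleySegal86}, chapter~8) by absorbing the $c$- and $d$-torus factors into $\widehat{T}\subseteq B^+\cap B^-$, and you correctly identify the only real technical point, namely that the rotation factor normalizes both standard Borel subgroups and acts on Weyl-group representatives only by elements of $T$. The sole cosmetic imprecision is the phrase ``$\hat g=g\cdot\tau$ with $g\in L(G,\sigma)$'': the central extension does not split, so this decomposition is only set-theoretic, but since the ambiguity lies in the center, which sits inside $\widehat{T}\subseteq B^{\mp}$, the argument is unaffected.
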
 

\begin{remark}
Note that the Bruhat twin decomposition is defined on the whole group $\widehat{L}(G,\sigma)$. For cities, this translates into the fact that any two chambers in $\mathfrak{B}^+$ resp.\ $\mathfrak{B}^-$ have a well-defined Weyl codistance --- see subsection~\ref{subsection:combinatorics_of_cities}. In contrast, Bruhat decompositions are only defined for the subgroups $\widehat{L}(G, \sigma)^{\pm}$. This translates into the fact that there are positive (resp.\ negative) chambers without a well-defined Weyl distance, hence that the buildings will be disconnected.
\end{remark}

\begin{example}
\label{Kumarsexample}
Shrawan Kumar studies formal completions of Kac-Moody groups and Kac-Moody algebras. Those groups complete in only  \glqq one direction\grqq\/ i.e.\ with respect to one of the two opposite $BN$\ndash pairs. Similarly in the setting of affine Kac-Moody groups of holomorphic loops we could use holomorphic functions with finite principal part. There is an associated twin $BN$\ndash pair; the positive Borel subgroups are completed affine Borel subgroups while the negative ones are the algebraic affine Borel subgroups. Thus for a geometric twin $BN$\ndash pair we have to use: $\widehat{L}(G,\sigma)^+=\widehat{L}(G,\sigma)$ and $\widehat{L}(G,\sigma)^-=\widehat{L_{alg}G}^{\sigma}$ (Kumar studies only the affine building associated to $BN^+$ --- cf.~\cite{Kumar02}).
\end{example}

\begin{definition}
An involution $\varphi:\widehat{L}(G, \sigma)\longrightarrow \widehat{L}(G, \sigma)$ is called a $BN$\ndash flip iff 
\begin{enumerate}
	\item  $\varphi^2=1$,
	\item $\varphi(B^+)=B^-$,
 	\item $\varphi$ centralizes $W$.
\end{enumerate}
\end{definition}

\begin{definition}
A geometric twin $BN$\ndash pair is called symmetric iff it has a $BN$\ndash flip. 
\end{definition}

\begin{example}
For a twin $BN$\ndash pair to be symmetric we need that $B^+$ and $B^-$ are isomorphic groups. More precisely this means that the completion has to be symmetric in both directions.
Thus the groups of example~\ref{Kumarsexample} have non symmetric geometric twin $BN$\ndash pairs. 
\end{example}

\begin{example}
An algebraic affine twin $BN$\ndash pair is symmetric.
\end{example}

\begin{example}
The geometric $BN$\ndash pair associated to any group $\widehat{L}(G, \sigma)$ is symmetric.
\end{example}

\begin{lemma}
The intersection $\widehat{L}(G,\sigma)^0$ of  $\widehat{L}(G,\sigma)^+$ with $\widehat{L}(G,\sigma)^-$ is a quasi-algebraic subgroup. For the standard affine Borel subgroups, it is the algebraic Kac-Moody group
$$\widehat{L}(G,\sigma)^0\simeq \widehat{L_{alg}G}^{\sigma}\,.$$
\end{lemma}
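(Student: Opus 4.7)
The plan is to first establish the isomorphism for the standard affine Borel subgroups by a Laurent-expansion argument, and then deduce the quasi-algebraic statement for an arbitrary pair of opposite Borel subgroups via conjugation.

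First I would unpack the characterization stated just after the definition of the geometric $BN$-pair: the standard positive subgroup $\widehat{L}(G,\sigma)^+$ consists of those elements whose underlying loops extend to a neighborhood of $0$ with a pole (equivalently, a zero in the sense of finite order) of finite order, and symmetrically $\widehat{L}(G,\sigma)^-$ consists of loops of finite order at $\infty$. An element of the intersection is therefore a map $f\colon \mathbb{C}^{*}\to G_{\mathbb C}$, twist-equivariant $f(t+2\pi)=\sigma f(t)$, with finite order singularities at both $0$ and $\infty$. A standard Laurent-series argument (applied entrywise after embedding $G_{\mathbb C}\hookrightarrow GL(n,\mathbb C)$) forces such an $f$ to have a finite Fourier expansion, so $f\in L_{\textrm{alg}}G^{\sigma}$.

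Next I would promote this to the level of the $2$-dimensional extension. The central $S^1$- (resp.\ $\mathbb{C}^{*}$-) extension corresponding to $c$ and the rotation semidirect factor corresponding to $d$ are present in both $\widehat{L}(G,\sigma)^{+}$ and $\widehat{L}(G,\sigma)^{-}$ and already sit inside $\widehat{L_{\textrm{alg}}G}^{\sigma}$, so the extension survives the intersection unchanged. Combining this with the loop-level statement yields
$$\widehat{L}(G,\sigma)^{0} \;=\; \widehat{L}(G,\sigma)^{+}\cap\widehat{L}(G,\sigma)^{-} \;\cong\; \widehat{L_{\textrm{alg}}G}^{\sigma}\,.$$
For an arbitrary choice of opposite Borel subgroups $B^{\pm}$, I would invoke the previous lemma asserting that all positive (resp.\ negative) Borel subgroups are conjugate: there exist $g_\pm\in\widehat{L}(G,\sigma)$ conjugating the standard affine Borels $B^{\pm}_{\textrm{st}}$ onto $B^{\pm}$. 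The paragraph following the definition of geometric $BN$-pair then identifies $\widehat{L}(G,\sigma)^{\pm}=g_\pm\,\widehat{L}(G,\sigma)^{\pm}_{\textrm{st}}\,g_\pm^{-1}$. Thus $\widehat{L}(G,\sigma)^{0}$ contains the maximal torus $\widehat{T}=B^{+}\cap B^{-}$ together with each simple affine root subgroup (since these lie in both $\widehat{L}(G,\sigma)^{+}$ and $\widehat{L}(G,\sigma)^{-}$), and the generation result recalled in lemma~\ref{generation_of_l_alg_G} and lemma~\ref{generation_of_l_alg_G_C} produces inside $\widehat{L}(G,\sigma)^{0}$ a copy of $\widehat{L_{\textrm{alg}}G}^{\sigma}$. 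Conversely the standard-Borel case shows that every element of $\widehat{L}(G,\sigma)^{0}$ arises this way, so $\widehat{L}(G,\sigma)^{0}$ is precisely a quasi-algebraic subgroup $G(B^{+},B^{-})$ in the sense introduced in section~\ref{section:regularity}.

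The principal obstacle is the Laurent-polynomial step. For the complex analytic completions $\widehat{MG}^{\sigma}_{\mathbb{C}}$ and $\widehat{A_n G}^{\sigma}$ the argument is immediate, but for purely real regularities ($H^{1}$-Sobolev, $C^{k}$, $C^{\infty}$) one must first interpret "finite order at $0$ resp.\ $\infty$'' as a holomorphic continuation to a punctured neighborhood of the respective point; this is precisely the content of the paragraph defining the standard positive and negative subgroups. Once this continuation is in hand the Laurent argument applies uniformly, and the rest of the proof is routine bookkeeping with the central and rotation factors.
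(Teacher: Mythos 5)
Your argument is correct in substance but follows a genuinely different route from the paper's. The paper's proof is a one-line appeal to the Bruhat-decomposition characterization: $\widehat{L}(G,\sigma)^{+}$ and $\widehat{L}(G,\sigma)^{-}$ are the subgroups admitting the positive resp.\ negative Bruhat decomposition, and $\widehat{L_{\textrm{alg}}G}^{\sigma}$ is identified as the maximal subgroup of $\widehat{L}(G,\sigma)$ having both, so the intersection is exactly the algebraic Kac-Moody group. You instead start from the other characterization the paper gives of the standard subgroups --- finite order at $0$ resp.\ at $\infty$ --- and run the elementary Laurent/Fourier argument: only finitely many negative and only finitely many positive Fourier coefficients forces a finite Fourier expansion. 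Your route is more concrete and self-contained (it reduces to complex analysis rather than to the decomposition theorems of Pressley--Segal, chapter~8, which the paper's one-liner implicitly invokes), and your remark that for purely real regularities ``finite order'' must be read as holomorphic continuation to a punctured neighbourhood is exactly the right caveat. The paper's route has the advantage of being regularity-independent by fiat, since it only uses the group-theoretic decompositions.

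One step of yours is thinner than it should be: for a non-standard pair of opposite Borel subgroups you conjugate $B^{+}$ and $B^{-}$ separately by elements $g_{+}$ and $g_{-}$, and $g_{+}\widehat{L}(G,\sigma)^{+}_{\textrm{st}}g_{+}^{-1}\cap g_{-}\widehat{L}(G,\sigma)^{-}_{\textrm{st}}g_{-}^{-1}$ is not visibly conjugate to the standard intersection unless the pair $(B^{+},B^{-})$ can be moved simultaneously by a single element. Your fallback generation argument gives the inclusion $\widehat{L_{\textrm{qalg}}G}^{\sigma}=G(B^{+},B^{-})\subseteq\widehat{L}(G,\sigma)^{0}$, but the reverse inclusion in the non-standard case is only asserted (``the standard-Borel case shows that every element arises this way''). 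To close this you should either invoke transitivity on opposite pairs (which follows from the Bruhat twin decomposition) or the conjugacy of quasi-algebraic subgroups established earlier in the section. The paper's own proof is silent on this point as well, so this is a gap you inherit rather than introduce, but it deserves a sentence.
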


\begin{proof}
$\widehat{L_{alg}G}^{\sigma}$ is the maximal subgroup of $\widehat{L}(G,\sigma)$ having both Bruhat decompositions.
\end{proof}

\subsection{Combinatorics of cities}
\label{subsection:combinatorics_of_cities}

\noindent We now define a twin city using the $W$\ndash metric description:

\begin{definition}[Twin City]
Let $\widehat{L}(G,\sigma)$ be an affine Kac-Moody group with a geometric $BN$\ndash pair and Weyl group $W_{\textrm{aff}}$.
Define $\mathcal{C}^+:=\widehat{L}(G,\sigma)/B^+$ and $\mathcal{C}^-:=\widehat{L}(G,\sigma)/B^-$. 
\begin{enumerate}
\item The distance $\delta^{\epsilon}:\mathcal{C}^{\epsilon}\times \mathcal{C}^{\epsilon} \longrightarrow W_{\textrm{aff}}$, $\epsilon \in \{+,-\}$ is defined as usual via the Bruhat decompositions:
$\delta^{\epsilon}(gB^{\epsilon} , fB^{\epsilon})=w(g^{-1}f)$ if $g^{-1}f\in \widehat{L}(G,\sigma)^{\epsilon}$. Otherwise it is $\infty$.
\item The codistance  $\delta^*:\mathcal{C}^+\times \mathcal{C}^- \cup \mathcal{C}^-\times \mathcal{C}^+ \longrightarrow W$ is defined as usual by $\delta^*(gB^- , fB^+)=w^{\mp}(g^{-1}f)$ (resp.\ $\delta^*(gB^+ , fB^-)=w^{\pm}(g^{-1}f)$).
\end{enumerate}
\end{definition}

The elements of $\mathcal{C}^{\pm}$ are called the positive (resp.\ negative) chambers of the twin city. The building is denoted $\mathfrak{B}=\mathfrak{B}^+\cup\mathfrak{B}^-$. One can define a simplicial complex realization in the usual way. We define connected components in $\mathfrak{B}^{\pm}$ in the following way: Two elements $\{c_1, c_2\}\in \mathfrak{B}^{\pm}$ are in the same connected component iff $\delta^{\pm}(c_1, c_2)\in W_{\textrm{aff}}$. We will check that this is an equivalence relation. Denote the set of connected components by $\pi_0(\mathfrak{B})$ resp.\ $\pi_0(\mathfrak{B}^{\pm})$.

\begin{remark}
Let $\widehat{L}(G, \sigma)$ be an algebraic affine Kac-Moody group. Then each city consists of exactly one building: Hence the twin city coincides with the twin building.
\end{remark}

\begin{lemmas}[Properties of a twin city]~
\label{propertiesofgeometricuniversaltwinbuilding}
\begin{enumerate}
\item The connected components of $\mathfrak{B}^{\epsilon}$ are affine buildings of type $(W,S)$.
\item Each pair consisting of one affine building in $\mathfrak{B}^+$ and one in $\mathfrak{B}^-$ is a twin building of type $(W,S)$.
\item The connected components of $\mathfrak{B}^{\epsilon}$ are indexed by elements in $\widehat{L}(G, \sigma)/\widehat{L}(G,\sigma)^{\epsilon}$.
\end{enumerate}
\end{lemmas}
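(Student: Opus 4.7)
The plan is to reduce each of the three statements to standard building theory applied to the subgroup $\widehat{L}(G,\sigma)^{\epsilon}$ and to the intersection $\widehat{L}(G,\sigma)^0 = \widehat{L}(G,\sigma)^+\cap \widehat{L}(G,\sigma)^-$, using the Bruhat and Bruhat twin decompositions from Lemma~\ref{bruhatforloopgroups}.

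First I would dispose of part (3) as a warm-up, since it is essentially formal. Two chambers $gB^{\epsilon}$ and $fB^{\epsilon}$ satisfy $\delta^{\epsilon}(gB^{\epsilon},fB^{\epsilon})\in W_{\textrm{aff}}$ precisely when $g^{-1}f\in \widehat{L}(G,\sigma)^{\epsilon}$, i.e.\ precisely when they determine the same left coset in $\widehat{L}(G,\sigma)/\widehat{L}(G,\sigma)^{\epsilon}$. Because $B^{\epsilon}\subset \widehat{L}(G,\sigma)^{\epsilon}$ this relation descends to chambers, and because $\widehat{L}(G,\sigma)^{\epsilon}$ is a subgroup it is automatically reflexive, symmetric, and transitive. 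The indexing of components by $\widehat{L}(G,\sigma)/\widehat{L}(G,\sigma)^{\epsilon}$ then follows.

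For part (1), I would fix a component, identified via the bijection above with a coset $g\widehat{L}(G,\sigma)^{\epsilon}$, and translate by $g^{-1}$ so that its chamber set becomes $\widehat{L}(G,\sigma)^{\epsilon}/B^{\epsilon}$. Restricted to this component, $\delta^{\epsilon}$ is exactly the Weyl distance read off from the Bruhat decomposition of $\widehat{L}(G,\sigma)^{\epsilon}$. Since $(B^{\epsilon},N,W,S)$ is by hypothesis a $BN$-pair for $\widehat{L}(G,\sigma)^{\epsilon}$, the three $W$-metric axioms of Definition~\ref{wmetricbuilding} follow from the standard dictionary between $BN$-pairs and $W$-metric buildings recalled in Section~\ref{section:summary_of_the_algebraic_theory}, yielding an affine building of type $(W,S)$ on each component.

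For part (2), I would fix components $\Delta^+\subset \mathfrak{B}^+$ and $\Delta^-\subset \mathfrak{B}^-$ corresponding to cosets $g_+\widehat{L}(G,\sigma)^+$ and $g_-\widehat{L}(G,\sigma)^-$. The Bruhat twin decomposition holds on the \emph{whole} group $\widehat{L}(G,\sigma)$, so $\delta^{*}$ is defined between every cross-pair of chambers, and in particular between every chamber of $\Delta^+$ and every chamber of $\Delta^-$. To verify the three codistance axioms of Definition~\ref{algebraictwinbuildingwmetricapproach}, I would use that by the geometric $BN$-pair axiom the quintuple $(B^+\cap \widehat{L}(G,\sigma)^-, B^-\cap \widehat{L}(G,\sigma)^+, N, W, S)$ is an algebraic twin $BN$-pair for $\widehat{L}(G,\sigma)^0$, and transport the situation into $\widehat{L}(G,\sigma)^0$ by suitable left-multiplication. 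The main obstacle — and the step I would be most careful about — is verifying the axiom (c) on existence of $Z$ with prescribed $\delta^*(Y,Z)=s$ and $\delta^*(X,Z)=ws$: one has to produce $Z$ in the same component $\Delta^-$, not merely somewhere in $\mathfrak{B}^-$. This should follow because multiplication by $s\in S$ on chambers stays within $\widehat{L}(G,\sigma)^-$ (as $N\subset \widehat{L}(G,\sigma)^0$), so the required neighbour $Z$ constructed by the algebraic twin $BN$-pair argument automatically lies in $\Delta^-$, closing the argument.
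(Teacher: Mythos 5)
Your proposal is correct and follows essentially the same route as the paper: components are identified with cosets of $\widehat{L}(G,\sigma)^{\epsilon}$, each component becomes a building of type $(W,S)$ via the Bruhat decomposition of $\widehat{L}(G,\sigma)^{\epsilon}$ from Lemma~\ref{bruhatforloopgroups}, and the globally defined Bruhat twin decomposition supplies the codistance between arbitrary cross-pairs. The only cosmetic difference is that in part (1) you invoke the standard $BN$-pair-to-building dictionary where the paper verifies the three $W$-metric axioms by direct computation with Bruhat cells, and your attention to axiom (c) of the codistance (that the neighbour $Z$ stays in the component $\Delta^-$, which is automatic since $\delta^-(Y,Z)=s\in W_{\textrm{aff}}$) is in fact more explicit than the paper's.
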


\begin{proof}~
\begin{enumerate}
\item Call two elements equivalent iff they are in the same connected component. This relation  is clearly symmetric and self-reflexive. To prove transitivity, let $fB^{\pm}$, $gB^{\pm}$ and $hB^{\pm}$ be such that there are $w_{fg}, w_{gh}\in W$ such that $f^{-1}g\in B^{\pm}w_{fg}B^{\pm}$ and $g^{-1}h\in B^{\pm}w_{gh}B^{\pm}$. Then $f^{-1}h=f^{-1}gg^{-1}h\in B^{\pm}w_{fg}B^{\pm}w_{gh}B^{\pm}$; hence the distance is in $W$ and thus finite. Connected components are exactly subsets with finite codistance. We have to check that each connected component fulfills the metric definition of a building. 
\begin{enumerate}
\item If $\delta(fB^{\epsilon}, gB^{\epsilon})=w$ --- hence there are $b_1, b_2\in B^{\epsilon}$ such that $f^{-1}g=b_1wb_2$ ---, then $g^{-1}f=b_2^{-1}w^{-1}b_1^{-1}$. Hence $\delta(gB^{\epsilon}, fB^{\epsilon})=w^{-1}$.
\item If $\delta(fB^{\epsilon}, gB^{\epsilon})=w$ and $\delta(f'B^{\epsilon},fB^{\epsilon})=s$, then 
$\delta(f'B^{\epsilon}, gB^{\epsilon})=w(f^{'-1}g)=w(f^{'-1}ff^{-1}g)\subset w(f^{'-1}f)w(f^{-1}g)\cup w(f^{-1}g)\in \{sw, w\}$. If $l(sw)=l(w)+1$, then $\delta(f'B^{\epsilon}, gB^{\epsilon})=sw$.
\item Let $\delta(fB^{\epsilon}, gB^{\epsilon})=w$ and denote by $C_s$ be the $s$\ndash panel containing $fB^{\epsilon}$. There are two possibilities: 
	\begin{enumerate}
	\item Either $w$ has a representation such that $w=sw'$ and  $l(sw')>l(w')$ --- i.e.\ the first letter of any reduced word representing $w'$ in the generators $s_i$ is not $s$. As the last letter of $w$ is $s$, the last chamber of the gallery connecting $fB^{\epsilon}$ and  $gB^{\epsilon}$, denoted $f'B^{\epsilon}$, is contained in the $s$\ndash panel $C_s$. Hence $\delta(f'B^{\epsilon}, gB^{\epsilon})=w'=sw$. 
	\item If $w$ has no representation of the form $w=sw'$ such that $l(sw')>l(w')$, then any chamber $fB^{\epsilon}$ in the panel $C(s)$ satisfies $\delta(f'B^{\epsilon}, gB^{\epsilon})=sw$. 
	\end{enumerate}
\end{enumerate}
\item Each pair consisting of one connected component in $\mathfrak{B}^+$ and one in $\mathfrak{B}^-$ fulfills the axioms of definition~\ref{algebraictwinbuildingwmetricapproach}. As the Bruhat decomposition is defined on $\widehat{L}(G, \sigma)$, the codistance is defined between arbitrary chambers in $\mathfrak{B}^{\epsilon}$ resp.\ $\mathfrak{B}^{-\epsilon}$.
\item $\widehat{L}(G,\sigma)$ has a decomposition into subsets of the form $\widehat{L}(G,\sigma)^{\epsilon}$. Those subsets are indexed with elements in $\widehat{L}(G,\sigma)/\widehat{L}(G,\sigma)^{\epsilon}$. The class corresponding to the neutral element is $\widehat{L}(G,\sigma)^{\epsilon}\subset \widehat{L}(G,\sigma)$. Thus it corresponds to a connected component and a building of type $(W,S)$.  The result follows via translation by elements in $\widehat{L}(G,\sigma)/\widehat{L}(G,\sigma)^{\epsilon}$: a connected component of $\mathfrak{B}^{\epsilon}$ containing $fB^{\epsilon}$ consists of all elements $ f\widehat{L}(G,\sigma)^{\epsilon} B^{\epsilon}$ as $\delta(fhB^{\epsilon}, fh'B^{\epsilon})=w((fh)^{-1}fh')= w(h^{-1}f^{-1}fh')=w(h^{-1}h')\in W$ as $h, h'\in \widehat{L}(G, \sigma)$. 
\end{enumerate}
\end{proof}

\begin{definition}
A twin city $\mathfrak{B}$ is symmetric iff there is a simplicial complex involution $\varphi_{\mathfrak{B}}: \mathfrak{B}\longrightarrow \mathfrak{B}$ such that $\varphi_{\mathfrak{B}}(\mathfrak{B}^{\epsilon})=\mathfrak{B}^{-\epsilon}$.
\end{definition}

\begin{lemma}
A twin city is symmetric iff its geometric $BN$\ndash pair is symmetric.
\end{lemma}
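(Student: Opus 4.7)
The plan is to prove both implications by translating between group-level involutions and simplicial involutions using the coset description $\mathcal{C}^{\pm} = \widehat{L}(G,\sigma)/B^{\pm}$ of the chambers of $\mathfrak{B}$.

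For the \emph{``only if''} direction, I would start with a $BN$\ndash flip $\varphi$ and build $\varphi_{\mathfrak{B}}$ directly. First, applying $\varphi$ to the relation $\varphi(B^+) = B^-$ and using $\varphi^2 = 1$ yields $\varphi(B^-) = B^+$. Then define
\[
\varphi_{\mathfrak{B}}(gB^{\epsilon}) := \varphi(g)\,B^{-\epsilon}, \qquad \epsilon\in\{+,-\}.
\]
Well\ndash definedness is immediate because $\varphi(B^{\epsilon}) = B^{-\epsilon}$; the involution property follows from $\varphi^2 = 1$; and the swap $\varphi_{\mathfrak{B}}(\mathfrak{B}^{\pm}) = \mathfrak{B}^{\mp}$ holds by construction. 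To verify it is a simplicial map I would check that $s$\ndash adjacency is preserved: two chambers $fB^{\epsilon}, gB^{\epsilon}$ are $s$\ndash adjacent iff $f^{-1}g \in B^{\epsilon}\langle s\rangle B^{\epsilon}$, and because $\varphi$ centralizes $W$ (so $\varphi(s) = s$ as Weyl group elements) and sends $B^{\epsilon}$ to $B^{-\epsilon}$, the images are again $s$\ndash adjacent chambers of opposite sign.

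For the \emph{``if''} direction, assume a simplicial involution $\varphi_{\mathfrak{B}}$ swapping the two halves. Since $\widehat{L}(G,\sigma)$ acts transitively on each $\mathcal{C}^{\epsilon}$ by left multiplication, I can precompose $\varphi_{\mathfrak{B}}$ with the action of a suitable group element so that the base chamber $B^+ \in \mathcal{C}^+$ is sent to $B^- \in \mathcal{C}^-$; denote the normalized involution by $\widetilde\varphi_{\mathfrak{B}}$. Define $\varphi:\widehat{L}(G,\sigma)\to \widehat{L}(G,\sigma)$ to be the unique group\ndash level automorphism representing conjugation by $\widetilde\varphi_{\mathfrak{B}}$ in the action on $\mathfrak{B}$ (modulo the center, which acts trivially). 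Involutivity of $\varphi$ follows from $\widetilde\varphi_{\mathfrak{B}}^2 = \mathrm{id}$. The identity $\varphi(B^+) = B^-$ holds because $B^{\pm}$ is the stabilizer of the corresponding base chamber and stabilizers transform by conjugation. Finally, $\varphi$ centralizes $W$ because a simplicial map preserves the type of every panel and hence acts trivially on the labelling set $S$, whence trivially on $W$.

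The main obstacle is the backward direction: producing a genuine group\ndash theoretic automorphism from a purely combinatorial symmetry. This requires that the action of $\widehat{L}(G,\sigma)$ on $\mathfrak{B}$ be faithful modulo the center, together with the rigidity statement that simplicial automorphisms of a thick twin building are induced by group automorphisms. This can be invoked on each pair $(\Delta^+,\Delta^-)$ supplied by lemma~\ref{propertiesofgeometricuniversaltwinbuilding} and then reassembled globally using the codistance $\delta^*$, which is defined on all of $\mathfrak{B}$ by the Bruhat twin decomposition of $\widehat{L}(G,\sigma)$.
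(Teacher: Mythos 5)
Your forward implication (a $BN$\ndash flip $\varphi$ induces the simplicial involution $gB^{\epsilon}\mapsto \varphi(g)B^{-\epsilon}$) is exactly the content of the paper's one\ndash line proof, and your verification of well\ndash definedness from $\varphi(B^{\epsilon})=B^{-\epsilon}$ and of adjacency\ndash preservation from $\varphi$ centralizing $W$ is a correct expansion of it.

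The converse is where you part company with the paper --- which silently omits that direction --- and where your argument has two genuine gaps. First, your deduction that the induced $\varphi$ centralizes $W$ rests on the claim that ``a simplicial map preserves the type of every panel.'' That is not true in general: a simplicial automorphism of a (twin) building only permutes the types according to an automorphism of the Coxeter diagram, and affine diagrams typically carry nontrivial automorphisms (e.g.\ type $\widetilde{A}_n$). The paper's definition of a symmetric twin city asks only for a simplicial involution swapping $\mathfrak{B}^+$ and $\mathfrak{B}^-$, with no type\ndash preservation hypothesis, so your $\varphi$ may a priori act on $W$ by a nontrivial diagram automorphism and thus fail the third axiom of a $BN$\ndash flip. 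Second, the step that manufactures a group automorphism out of the combinatorial involution invokes rigidity of thick twin buildings as a black box; for a city this is more delicate than for a single twin building, because the involution may permute the uncountably many connected components of $\mathfrak{B}^{\pm}$ in a way not realized by any element of $\widehat{L}(G,\sigma)$, and the proposed reassembly ``using the codistance $\delta^*$'' is asserted rather than carried out. As it stands, the converse is a plausible programme rather than a proof; either it needs the rigidity input made precise for cities, or the statement should be read with a type\ndash preserving (or suitably weakened) notion of symmetry.
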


\begin{proof}
The $BN$\ndash pair involution induces a building involution.
\end{proof}

\subsection{Group actions on the twin city}

This section studies the action of $\widehat{L}(G, \sigma)$ on the twin city associated to it.

We recall the special case of an algebraic Kac-Moody group: Borel subgroups in an algebraic Kac-Moody group are exactly the stabilizers of chambers while parabolic subgroups are the stabilizers of simplices. Furthermore the action is isometric with respect to the Weyl distance.

\begin{lemmas}[Action of $\widehat{L}(G, \sigma)$]~
\begin{enumerate}
\item The action of $\widehat{L}(G, \sigma)$  on $\mathfrak{B}$ by left multiplication is isometric.
\item The Borel subgroups are exactly the stabilizers of the chambers, parabolic subgroups are the stabilizers of simplices.
\item $\widehat{L}(G, \sigma)^{\epsilon}$  acts on the identity component $\Delta^{\epsilon}_0\subset \mathfrak{B}^{\epsilon}$ by isometries.
\item Let $\Delta_1^+\cup \Delta_1^-$ be an arbitrary twin building in $\mathfrak{B}$. Suppose $D^{\pm}$ are two opposite Borel subgroup stabilizing cells in $\Delta^{\pm}_0$. The group  $G(D^+,D^-)$ acts on  $\Delta_0^+\cup \Delta_0^-$ by isometries.
\item Let $fB^{\epsilon}$ and $gB^{\epsilon}$ be two chambers in the same connected component of $\mathfrak{B}^{\epsilon}$, and $h, h'\in \widehat{L}(G, \sigma)^{-\epsilon}$. The left translates  $hfB^{-\epsilon}$ and $h'gB^{-\epsilon}$ are in the same connected component iff $f^{-1}h^{-1}h'g \in \widehat{L}(G,\sigma)^{\epsilon}$. 
\end{enumerate}
\end{lemmas}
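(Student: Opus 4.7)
The plan is to derive all five statements from (i) the $W$\ndash metric definitions of $\delta^{\epsilon}$ and $\delta^{*}$, (ii) the description of connected components from Lemma~\ref{propertiesofgeometricuniversaltwinbuilding}(3), and (iii) the Borel and quasi\ndash algebraic structure theory developed in Section~\ref{section:regularity}.

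Part~(1) reduces to the cancellation $(hg)^{-1}(hf)=g^{-1}f$, which shows that the Bruhat class $w(g^{-1}f)$ is invariant under left multiplication by any $h\in\widehat{L}(G,\sigma)$; applied to $\delta^{\epsilon}$ it preserves both finite values and the $\infty$\ndash value, and applied to the globally defined Bruhat twin decomposition it preserves $\delta^{*}$. Part~(2) follows because the standard Borel $B^{\epsilon}$ is tautologically the stabilizer of the base chamber $eB^{\epsilon}\in\widehat{L}(G,\sigma)/B^{\epsilon}$; conjugating produces $fB^{\epsilon}f^{-1}$ as the stabilizer of $fB^{\epsilon}$, and conjugacy of all positive (resp.\ negative) Borels then identifies Borel subgroups with chamber stabilizers. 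The same argument, with $B^{\epsilon}$ replaced by a standard parabolic and chambers by simplices of the corresponding type, settles the parabolic case. Part~(3) is immediate: by Lemma~\ref{propertiesofgeometricuniversaltwinbuilding}(3) the identity component $\Delta_{0}^{\epsilon}$ consists of all cosets $fB^{\epsilon}$ with $f\in\widehat{L}(G,\sigma)^{\epsilon}$, and this set is visibly stable under left multiplication by $\widehat{L}(G,\sigma)^{\epsilon}$, while the isometry assertion comes from part~(1). Part~(5) is a direct specialization of the same component criterion: two chambers $xB^{\epsilon}$ and $yB^{\epsilon}$ lie in the same component iff $x^{-1}y\in\widehat{L}(G,\sigma)^{\epsilon}$, and the choice $x=hf$, $y=h'g$ yields the stated equivalence $f^{-1}h^{-1}h'g\in\widehat{L}(G,\sigma)^{\epsilon}$.

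The step requiring the most care is part~(4). I will first invoke part~(2) to translate the hypothesis that $D^{\pm}$ stabilizes a cell in $\Delta_{0}^{\pm}$ into the containment $D^{\pm}\subset\widehat{L}(G,\sigma)^{\pm}$: chambers in $\Delta_{0}^{\pm}$ correspond to cosets $fB^{\pm}$ with $f\in\widehat{L}(G,\sigma)^{\pm}$, so every Borel stabilizing such a chamber is a conjugate $fB^{\pm}f^{-1}\subset\widehat{L}(G,\sigma)^{\pm}$. Next I observe that $N$ is generated by the maximal torus $\widehat{T}$ together with lifts of the simple reflections, which lie inside the algebraic $SU(2)$\ndash (resp.\ $SL_{2}(\mathbb{C})$\ndash) subgroups, hence $N\subset\widehat{L_{alg}G}^{\sigma}\subset\widehat{L}(G,\sigma)^{0}$. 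Combined with the two Bruhat decompositions $G(D^{+},D^{-})=\coprod_{w}(D^{\epsilon}\cap G(D^{+},D^{-}))\,w\,(D^{\epsilon}\cap G(D^{+},D^{-}))$ coming from the twin $BN$\ndash pair of $G(D^{+},D^{-})$ (one for each sign $\epsilon$), this yields $G(D^{+},D^{-})\subset\widehat{L}(G,\sigma)^{+}\cap\widehat{L}(G,\sigma)^{-}=\widehat{L}(G,\sigma)^{0}$. Applying part~(3) to both signs then gives the claimed isometric action of $G(D^{+},D^{-})$ on $\Delta_{0}^{+}\cup\Delta_{0}^{-}$. The crux of the argument is exactly this two\ndash sided containment; everything else is routine unwinding of the $W$\ndash metric definitions and the Bruhat/Birkhoff decomposition.
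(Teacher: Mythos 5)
Parts (1), (2), (3) and (5) of your argument coincide with the paper's proof and are fine: left cancellation for the (co)distance, the tautological stabilizer computation $fB^{\epsilon}f^{-1}$ together with conjugacy of Borels, stability of $\widehat{L}(G,\sigma)^{\epsilon}B^{\epsilon}$ under left multiplication, and the component criterion $x^{-1}y\in\widehat{L}(G,\sigma)^{\epsilon}$.

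Part (4) is where your proof breaks down, and the problem is that you have proved a different (and essentially vacuous) statement. The hypothesis ``Let $\Delta_1^+\cup\Delta_1^-$ be an \emph{arbitrary} twin building'' is not decorative: the paper's own proof takes the chambers $fB^+\in\Delta_1^+$, $gB^-\in\Delta_1^-$ stabilized by $D^{\pm}$ and shows that $G(D^+,D^-)$ preserves $\Delta_1^{\pm}$; the occurrences of $\Delta_0$ in the displayed statement are slips for $\Delta_1$. Under your literal reading ($D^{\pm}$ stabilize chambers of the \emph{identity} components) one can in fact show, via the Birkhoff decomposition, that $D^+=(gb^-)B^+(gb^-)^{-1}$ and $D^-=(gb^-)B^-(gb^-)^{-1}$ with $gb^-\in\widehat{L}(G,\sigma)^+\cap\widehat{L}(G,\sigma)^-$, so that $G(D^+,D^-)$ is simply $\widehat{L}(G,\sigma)^0$ itself --- the assertion then says nothing about the generic twin building in the city, which is the whole point of the lemma. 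For an arbitrary component $\Delta_1^{\pm}$ the containment $G(D^+,D^-)\subset\widehat{L}(G,\sigma)^0$ is false: one has $G(D^+,D^-)=hG(B^+,B^-)h^{-1}$ with $h\notin\widehat{L}(G,\sigma)^{\pm}$ in general, and your argument collapses already at the first step $D^{\pm}\subset\widehat{L}(G,\sigma)^{\pm}$. The paper instead writes $f=f'hb_f$, $g=g'hb_g$ and verifies directly that $w(f^{-1}k^{-1}f)\in W$ for $k=hk_0h^{-1}\in G(D^+,D^-)$, which is the computation you would need.

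A second, independent flaw in the same step: the group $N$ appearing in the twin $BN$-pair of $G(D^+,D^-)$ is the normalizer $N_D$ of the torus $T_D=D^+\cap D^-$ \emph{inside $G(D^+,D^-)$}, not the standard $\widehat{N}$. Your observation that lifts of the simple reflections lie in algebraic $SU(2)$-subgroups applies to the standard torus; for $T_D$ the relevant rank-one subgroups are those of the conjugated group $hG(B^+,B^-)h^{-1}$, and there is no a priori reason for $N_D$ to lie in $\widehat{L_{alg}G}^{\sigma}$. So even granting your reading of the hypotheses, the Bruhat-decomposition step $G(D^+,D^-)\subset\widehat{L}(G,\sigma)^+\cap\widehat{L}(G,\sigma)^-$ is not justified as written; it has to be replaced either by the Birkhoff argument sketched above or by the paper's explicit conjugation computation.
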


\begin{proof}~
\begin{enumerate}
\item $G$ acts isometrically on a twin building if the action on both parts preserves the distances and the codistance --- cf.~\cite{AbramenkoBrown08}, section~6.3.1. Hence the first assertion follows from the definition of $\mathcal{C}^{\pm}$ as coset spaces of $\widehat{L}
(G, \sigma)$ and a direct check: $$\delta(hfB^{\pm\epsilon}, hgB^{\pm \epsilon})=w(f^{-1}h^{-1}hg)=w(f^{-1}g)=\delta(fB^{\pm\epsilon}, gB^{\pm \epsilon})\,.$$
\item The chamber corresponding to $fB^{\epsilon}$ is stabilized by the Borel subgroup $B_f^{\epsilon}:=fB^{\epsilon}f^{-1}$. The converse follows as each Borel subgroup is conjugate to a standard one. Analogous for the parabolic subgroups. 
\item The identity component is described by $\widehat{L}(G, \sigma)^{\epsilon}B^{\epsilon}$. Hence it is preserved by left multiplication of $\widehat{L}(G, \sigma)^{\epsilon}$ --- thus the action is well defined and isometric by the first statement. 

\item The group $\widehat{L}(G, \sigma)^{\epsilon}=B^{\epsilon}WB^{\epsilon}$ acts by the last statement on $\Delta_0^{\epsilon}$ by isometries. 
Hence $G(B^+,B^-)=\widehat{L}(G, \sigma)^+\cap\widehat{L}(G, \sigma)^-$ acts on $\Delta_0^+\cup \Delta_0^-$ by isometries.
Let $fB^{+}\in \Delta_1^{+}$ and $gB^{-}\in \Delta_1^{-}$ be the chambers stabilized by $D^{\pm}$. Then $D^{+}=fB^+f^{-1}$ and $D^-=gB^-g^{-1}$. 
By theorem~\ref{algebraic_subgroups} there is some $h\in \widehat{L}(G, \sigma)$ such that $G(D^+,D^-)=hG(B^+,B^-)h^{-1}$. The groups $B^{'\pm}=hB^{\pm}h^{-1}$ are Borel subgroups in $G(D^+,D^-)$. As all positive resp.\ negative Borel subgroups in $G(D^+, G^-)$ are conjugate in $G(D^+, G^-)$, there are elements $f'$ and $g'$ in $G(D^+, G^-)$ such that $D^+=f'{B'}^{+}{f'}^{-1} =f'hB^{+}h^{-1}{f'}^{-1}$ and $D^-=g'{B'}^{-}{g'}^{-1} =g'hB^{-}h^{-1}{g'}^{-1}$. Hence $f=f'hb_f$ with $b_f\in B^+$ and $g=g'hb_g$ with $b_g\in B^-$. 

Now we can prove that $\Delta_1^+$ is invariant under $G(D^+,D^-)$:
For $k\in G(D^+,D^-)$ we find using $k=hk_0h^{-1}$ (hence $k_0\in G(B^+,B^-)\subset\widehat{L}(G, \sigma)^+$):
\begin{equation*}
  \begin{aligned}
    \delta(kfB^+, fB^+)&=w(f^{-1}k^{-1}f)=\\
    &= w(f^{-1}hk_0^{-1}h^{-1}f)=\\
    &= w(b_fh^{-1}f'hk_0^{-1}f'hb_f)
  \end{aligned}
\end{equation*}

As $b_f\in B^+\subset \widehat{L}(G, \sigma)^+$, $h^{-1}{f'}^{-1}h\in G(B^+, B^-)\subset \widehat{L}(G, \sigma)^{+}$, $k_0\in G(B^+,B^-)\subset\widehat{L}(G, \sigma)^+$, we find that $w(b_fh^{-1}f'hk_0^{-1}f'hb_f)\in W$. Hence $G(D^+, D^-)$ preserves $\Delta_1^+$. 
Analogously we conclude for $\Delta_1^-$. This proves the claim.

\item $f^{-1}h^{-1}h'g \in \widehat{L}(G,\sigma)^{\epsilon}$ is equivalent to $\delta^{\epsilon}(hfB^{-\epsilon}, h'gB^{-\epsilon})\in W_{\textrm{aff}}$. \qedhere
\end{enumerate}
\end{proof}

As quasi-algebraic subgroups are in bijection with algebraic twin buildings in $\mathfrak{B}$, we give a geometric characterization of them.

\begin{theorem}
\label{characterization_of_quasi_algebraic_loops}
Let $H\subset \widehat{L}(G, Id)$ be a subgroup conjugate to $G$ and $\mathfrak{h}$ its Lie algebra. Then the group 
$$L_HG:=\{e^{tX_1} e^{t Y_1}\dots e^{tX_n} e^{tY_n} g \hspace{1pt}|\hspace{2pt} g\in H,\hspace{1pt} X_i, Y_i \in \mathfrak{h},\hspace{1pt} e^{tX_i} e^{t Y_i}=e  \}$$
is quasi-algebraic. Conversely for each quasi-algebraic subgroup $\widehat{L_{qalg}G}$ there are $H$ and $\mathfrak{h}$ such that $\widehat{L_{qalg}G}$ is of this form.
\end{theorem}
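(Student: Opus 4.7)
My plan is to prove both directions by exploiting the generation of the algebraic Kac-Moody group by its $SU(2)$-subgroups (respectively $SL_2(\mathbb{C})$-subgroups) attached to the simple affine roots (Lemmas~\ref{generation_of_l_alg_G} and~\ref{generation_of_l_alg_G_C}) combined with the conjugacy of quasi-algebraic subgroups (Theorem~\ref{algebraic_subgroups}).

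For the forward direction, I would first check that $L_HG$ is a subgroup. The key observation is that $\textrm{Ad}(g)\mathfrak{h}=\mathfrak{h}$ for $g\in H$, so conjugation gives $g\,e^{tX}\,g^{-1}=e^{t\,\textrm{Ad}(g)X}$ with $\textrm{Ad}(g)X\in\mathfrak{h}$, which preserves both the form of the defining factors and the closure condition; this lets me reduce products and inverses of elements of $L_HG$ to the normal form used in the definition. Second, I would exhibit inside $L_HG$ a full system of simple affine root subgroups: the $l$ finite simple roots $\alpha_1,\dots,\alpha_l$ yield root subgroups sitting inside $H\cong G$, while the affine simple root $\alpha_0=\delta-\theta$ is obtained from exponentials $e^{tX}e^{tY}$ with $(X,Y)$ taken from an $\mathfrak{sl}_2$-triple of $\mathfrak{h}$ attached to $\pm\theta$, the closure condition being exactly what forces these factors to be closed loops. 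By Lemma~\ref{generation_of_l_alg_G} (respectively Lemma~\ref{generation_of_l_alg_G_C}), these $l+1$ subgroups together with a torus in $H$ generate a subgroup of $L_HG$ isomorphic to $\widetilde{L_{\textrm{alg}}G}$, and a further generation argument shows that every element of $L_HG$ in fact lies in this subgroup, so $L_HG$ itself is quasi-algebraic.

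For the converse, let $\widehat{L_{qalg}G}\subset\widehat{L}(G,\textrm{Id})$ be quasi-algebraic. By Theorem~\ref{algebraic_subgroups} there exists $f\in\widehat{L}(G,\textrm{Id})$ with $\widehat{L_{qalg}G}=f\widehat{L_{\textrm{alg}}G}f^{-1}$. Setting $H:=fGf^{-1}$, the conjugate of the constant-loop copy of $G$, gives a subgroup conjugate to $G$ with Lie algebra $\mathfrak{h}=f\mathfrak{g}f^{-1}$. Applying the forward direction to this $H$ constructs $L_HG$, and matching generating data (torus and simple affine root subgroups) on both sides yields $L_HG=\widehat{L_{qalg}G}$.

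The main obstacle is the concrete realization of the affine simple root subgroup for $\alpha_0$ in the forward direction. Writing this subgroup as an explicit collection of products $e^{tX}e^{tY}$ with $X,Y\in\mathfrak{h}$ (constant in the loop parameter) and satisfying the closure condition requires carefully matching the analytic exponentials of constant root vectors against the algebraic affine root group structure indexed by the loop parameter; the geometric content of the theorem is precisely that this closure condition encodes the affine integrality built into the Kac-Moody group structure.
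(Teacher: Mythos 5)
Your converse direction matches the paper exactly: both reduce to the case $H=G$ by writing $\widehat{L_{qalg}G}=fL_{\textrm{alg}}Gf^{-1}$ and setting $H=fGf^{-1}$, $\mathfrak{h}=f\mathfrak{g}f^{-1}$, so that $L_HG=fL_GGf^{-1}$. The forward direction, however, has a genuine gap. After conjugating to $H=G$, the statement splits into two inclusions. Your generation argument (exhibiting the $l+1$ simple affine root subgroups inside $L_GG$ and invoking Lemma~\ref{generation_of_l_alg_G}) can only ever deliver the inclusion $L_{\textrm{alg}}G\subseteq L_GG$, i.e.\ that $L_GG$ \emph{contains} a quasi-algebraic subgroup. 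The step you dispose of with ``a further generation argument shows that every element of $L_HG$ in fact lies in this subgroup'' is precisely the hard inclusion $L_GG\subseteq L_{\textrm{alg}}G$: one must show that each individual factor $e^{tX}e^{tY}$ subject to the closure condition is itself an algebraic (Laurent-polynomial) loop. This is not a formal consequence of the group being generated by root subgroups; it is an analytic-to-algebraic statement, and the paper obtains it by citing Theorem~4.7 of Mitchell, which says exactly that $c(t)=\exp(tX)\exp(tY)$ lies in $L_{\textrm{alg}}G$ if and only if the loop closes up. Without this input (or a substitute for it), your argument does not establish that $L_HG$ is quasi-algebraic rather than merely containing a quasi-algebraic subgroup.

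For the inclusion you do address, your route differs from the paper's in an interesting way. The paper proves $L_{\textrm{alg}}G\subseteq L'_{\textrm{alg}}G$ by a building-theoretic argument: it computes $L_{\textrm{alg}}G\cap P_I=\{h: h(t)=\exp(tX)h(1)\exp(-tX)\}$ explicitly, rewrites $\exp(tX)h(1)\exp(-tX)=\exp(tX)\exp(tY)h(1)$ with $Y=-\textrm{Ad}(h(1))X$ to see these isotropy groups sit inside $L'_{\textrm{alg}}G$, and then deduces transitivity of $L'_{\textrm{alg}}G$ on chambers by walking along galleries; combining transitivity with equality of chamber stabilizers yields the inclusion. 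Your proposal to instead exhibit the simple affine root subgroups directly and quote the generation lemma is essentially the same computation repackaged (the realization of $i_{\alpha_0}(SU(2))$ as closed products $e^{tX}e^{tY}$ is exactly the paper's isotropy identity), and it would work for that inclusion; but it does not make the missing inclusion any easier, and the obstacle you flag at the end of your proposal concerns the easy containment, not the one that actually requires Mitchell's theorem.
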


\noindent Before describing the proof, let us note as a corollary an application: We give a characterization of the connected components of $\mathfrak{B}^{\epsilon}$. 

\begin{cor}
\label{cor:implication_for_building}
Two simplices $\bar{f}B^{\epsilon}$ and  $\bar{g}B^{\epsilon} \in \mathfrak{B}^{\epsilon}$ are contained in the same connected component $\Delta_1^{\epsilon}$ of $\mathfrak{B}^{\epsilon}$ iff there are representatives $f,g\in \widehat{L}(G, \sigma)$, $\{X_1,\dots, X_n, Y_1, \dots, Y_n \in \mathfrak{h} |e^{tX_i} e^{t Y_i}=e\}$  where $\mathfrak{g}\simeq \mathfrak{h}\subset \widehat{L}(\mathfrak{g}, \sigma)$ and a constant $c$ such that $fB^{\epsilon}=\bar{f}B^{\epsilon}$ $gB^{\epsilon}=\bar{g}B^{\epsilon}$  and $f(t)= e^{tX_1} e^{t Y_1}\dots e^{tX_n} e^{tY_n} c \cdot g(t)$.
\end{cor}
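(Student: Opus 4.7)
The corollary is a direct translation of Lemma~\ref{propertiesofgeometricuniversaltwinbuilding}(3) --- which says that $\bar f B^\epsilon$ and $\bar g B^\epsilon$ lie in the same connected component iff $\bar f^{-1}\bar g\in\widehat{L}(G,\sigma)^\epsilon$ --- into the explicit product expansion of quasi-algebraic elements supplied by Theorem~\ref{characterization_of_quasi_algebraic_loops}. The plan is to prove each direction by exhibiting representatives for which $fg^{-1}$ is a concrete element of some quasi-algebraic subgroup $L_H G$.

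For the forward direction, I would start from $\bar f^{-1}\bar g\in\widehat{L}(G,\sigma)^\epsilon$ and apply the Bruhat decomposition of $\widehat{L}(G,\sigma)^\epsilon$ to write $\bar f^{-1}\bar g = b_1\, w\, b_2$ with $b_1,b_2\in B^\epsilon$ and $w\in W_{\textrm{aff}}$. Setting $f:=\bar f b_1$ and $g:=\bar g b_2^{-1}$ leaves the cosets $\bar f B^\epsilon$ and $\bar g B^\epsilon$ unchanged and gives $f^{-1}g=w$, hence $fg^{-1}=gw^{-1}g^{-1}$. Because $W_{\textrm{aff}}\subset \widehat{L_{alg}G}^\sigma$, the element $fg^{-1}$ sits in the conjugate quasi-algebraic subgroup $g\widehat{L_{alg}G}^\sigma g^{-1}$, and Theorem~\ref{characterization_of_quasi_algebraic_loops} identifies this conjugate with $L_H G$ for $\mathfrak{h}=\textrm{Ad}(g)\mathfrak{g}$ (the conjugate of the constant copy of $\mathfrak{g}$ in $\widehat{L}(\mathfrak{g},\sigma)$); reading off the product expansion of $fg^{-1}$ inside this $L_H G$ yields the required formula $f(t)=e^{tX_1}e^{tY_1}\cdots e^{tX_n}e^{tY_n}c\cdot g(t)$.

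For the reverse direction, I would assume representatives of the given form and observe that $fg^{-1}$ lies in a quasi-algebraic subgroup $L_H G$, which by Theorem~\ref{algebraic_subgroups} is a conjugate $h\,\widehat{L_{alg}G}^\sigma h^{-1}$ of the standard algebraic group. Since $\widehat{L_{alg}G}^\sigma\subset \widehat{L}(G,\sigma)^\epsilon$ and we retain the freedom to shift representatives by $B^\epsilon$ on the right, the plan is to absorb the conjugating factor $h$ into a Borel contribution so that $\bar f^{-1}\bar g$ is pushed into $\widehat{L}(G,\sigma)^\epsilon$. Lemma~\ref{propertiesofgeometricuniversaltwinbuilding}(3) then closes the argument.

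The main obstacle I anticipate is this last step. An arbitrary conjugate of $\widehat{L_{alg}G}^\sigma$ need not lie inside $\widehat{L}(G,\sigma)^\epsilon$, so one cannot trivially deduce membership from the abstract expression for $fg^{-1}$. The freedom to modify representatives only by $B^\epsilon$ on the right must therefore be coupled carefully with the Bruhat decomposition of $\widehat{L}(G,\sigma)^\epsilon$ and with the explicit shape of elements of $L_H G$ from Theorem~\ref{characterization_of_quasi_algebraic_loops}, in order to close the circle between ``same connected component'' and the prescribed product expansion.
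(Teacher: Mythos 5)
Your forward direction is correct but takes a genuinely different route from the paper. The paper's proof is shorter: it chooses a quasi\ndash algebraic subgroup $G(\Delta_1^{\epsilon})$ acting transitively on the connected component, fixes an arbitrary representative $g$ of $\bar g$, uses transitivity to produce $f'\in G(\Delta_1^{\epsilon})$ with $f'gB^{\epsilon}=\bar fB^{\epsilon}$, sets $f:=f'g$, and reads the product expansion of $fg^{-1}=f'$ directly off theorem~\ref{characterization_of_quasi_algebraic_loops}, with $\mathfrak h=\varphi(\mathfrak g)$ for an embedding $\varphi$ into $G(\Delta_1^{\epsilon})$. Your argument instead normalizes $f^{-1}g$ to a Weyl representative $w$ via the Bruhat decomposition and observes $fg^{-1}=gw^{-1}g^{-1}\in g\widehat{L_{alg}G}^{\sigma}g^{-1}=L_HG$ with $\mathfrak h=\mathrm{Ad}(g)\mathfrak g$. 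That reaches the same conclusion without invoking transitivity of a quasi\ndash algebraic subgroup on the whole component (a fact the paper imports from the Mitchell/Pressley--Segal machinery), at the small extra cost of noting that $w$ admits an algebraic representative. In both versions the element $c$ lands in $H$ rather than in the constant loops, so ``constant'' must be read as ``element of $H$''; this looseness is in the statement, not in your argument.

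The reverse direction is where your proposal has a genuine gap, and you have located it precisely. With the existential quantifier over $\mathfrak h$ read literally, the hypothesis only places $fg^{-1}$ in \emph{some} quasi\ndash algebraic subgroup $L_HG=k\widehat{L_{alg}G}^{\sigma}k^{-1}$, whereas the criterion of lemma~\ref{propertiesofgeometricuniversaltwinbuilding} requires $g^{-1}(fg^{-1})g\in\widehat{L}(G,\sigma)^{\epsilon}$, i.e.\ membership in $(g^{-1}k)\widehat{L_{alg}G}^{\sigma}(g^{-1}k)^{-1}$ intersected with $\widehat{L}(G,\sigma)^{\epsilon}$; when $g^{-1}k\notin\widehat{L}(G,\sigma)^{\epsilon}$ this can fail (conjugating a nonconstant algebraic loop by a loop with an essential singularity at $0$ generally leaves $\widehat{L}(G,\sigma)^{+}$), and no right\ndash shift of representatives by $B^{\epsilon}$ repairs it. The resolution is that $\mathfrak h$ must be tied to the component: in the forward construction it comes out as $\mathrm{Ad}(g)\mathfrak g$ (yours) or as $\varphi(\mathfrak g)\subset G(\Delta_1^{\epsilon})$ (the paper's), and with that reading the converse is immediate, since then $g^{-1}L_HG\,g=\widehat{L_{alg}G}^{\sigma}\subset\widehat{L}(G,\sigma)^{\epsilon}$, respectively $L_HG\subset G(\Delta_1^{\epsilon})$ preserves $\Delta_1^{\epsilon}$. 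Be aware that the paper's own proof of corollary~\ref{cor:implication_for_building} writes out only the forward direction and dismisses the rest with a reference to theorem~\ref{characterization_of_quasi_algebraic_loops}, so you are not missing a trick that the paper supplies; you have instead identified the point at which the statement needs its quantifier on $\mathfrak h$ tightened.
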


\begin{proof}[Proof of corollary~\ref{cor:implication_for_building}]
Choose a quasi-algebraic subgroup $G(\Delta_1^{\epsilon})$ acting transitively on $\Delta_1^{\epsilon}$. Choose $g$ to be an arbitrary representative of $\bar{g}$. Then there is $f'\in G(\Delta_1)$ such that $f'g$ is a representative for $\bar{f}$. Put $f:=f'g$, choose an embedding $\varphi:\widehat{L_{alg}\mathfrak{g}}\longrightarrow G(\Delta_1^{\epsilon})$ and put $\mathfrak{h}=\varphi(\mathfrak{g})$.  Now the corollary follows from theorem~\ref{characterization_of_quasi_algebraic_loops}.
\end{proof}

\noindent The theorem is a consequence of the following lemma, due to Ernst Heintze, which is the special case for $H:=G$.

\begin{lemmas}[Characterization of $L_{alg}G$]~
\label{characterizationofalgebraicloops}
$$L_{alg}G:=\{e^{tX_1} e^{t Y_1}\dots e^{tX_n} e^{tY_n} g \hspace{1pt}|\hspace{2pt} g\in G,\hspace{1pt} X_i, Y_i \in \mathfrak{g},\hspace{1pt} e^{tX_i} e^{t Y_i}=e  \}$$
\end{lemmas}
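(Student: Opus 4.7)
The plan is a double-inclusion argument, with the bulk of the work going into the nontrivial direction.

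For $(\supseteq)$, I would fix constant Lie algebra elements $X_i, Y_i \in \mathfrak{g}$ satisfying the loop-closure hypothesis (read as $e^{2\pi X_i}e^{2\pi Y_i} = e$, so that each pair $t \mapsto e^{tX_i}e^{tY_i}$ closes up to a loop $S^1 \to G$) together with a constant $g \in G$. In any faithful finite-dimensional representation the closure condition forces the matrix entries of each factor to combine into trigonometric polynomials in $e^{it}$, so each factor defines an algebraic loop. Since the constant $g$ is also algebraic and $L_{alg}G$ is a group, the full product belongs to $L_{alg}G$.

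For the reverse inclusion, denote the right-hand side by $S$ and show that $S$ is a subgroup of $L(G,\mathrm{Id})$ containing a generating set of $L_{alg}G$. Closure of $S$ under multiplication follows by concatenating two expressions and pushing the inner constant factor $g$ past the subsequent exponentials via $g\, e^{tZ}\, g^{-1} = e^{t\,\mathrm{Ad}(g) Z}$; the loop-closure condition is preserved by $\mathrm{Ad}(g)$. Inverses are handled by reversing the order, conjugating, and negating the exponents, again preserving the closure hypothesis. The constant loops $G$ lie in $S$ by taking $n = 0$. By lemma~\ref{generation_of_l_alg_G}, it then suffices to show that each $i_{\alpha}(SU(2))$ for $\alpha$ a simple affine root is contained in $S$. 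Since every element of $SU(2)$ equals $e^A e^B$ for suitable $A, B \in \mathfrak{su}(2)$, the problem reduces to identifying each one-parameter subgroup of $i_{\alpha}(SU(2))$ with a curve $t \mapsto e^{tZ}$ for some $Z \in \mathfrak{g}$, with the closure condition emerging from the periodicity of the $SU(2)$-action.

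The main obstacle is this last identification, particularly for the extra simple affine root $\alpha_0 = \delta - \theta$, whose associated root vector carries the $e^{it}$-twist. For the finite simple roots $\alpha_1, \ldots, \alpha_{\ell}$ the subgroup $i_{\alpha}(SU(2))$ sits inside the constant loops $G \hookrightarrow L_{alg}G$, and the identification is trivial. For $\alpha_0$, I expect that one realizes $i_{\alpha_0}(SU(2))$ as the conjugate of an $SU(2)$-subgroup of $G$ by a one-parameter curve of the form $e^{tH}$ with $H$ in the appropriate coroot lattice, thereby trading the $e^{it}$-twist for the $e^{tX}e^{tY}$-form demanded by $S$. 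Once this local model is established for each generator, the multiplicative closure of $S$ assembles the generators into arbitrary elements of $L_{alg}G$ and completes the proof.
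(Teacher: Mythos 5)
Your proposal is correct in substance but takes a genuinely different route for the hard inclusion. The paper proves $L_{alg}G\subset L'_{alg}G$ geometrically: it shows that the right-hand side acts transitively on the chambers of the affine building (via the panel subgroups $L_{alg}G\cap P_i\simeq SU(2)$) and that it contains the full chamber stabilizers (via Mitchell's description $L_{alg}G\cap P_I=\{h\mid h(t)=\exp(tX)h(1)\exp(-tX)\}$, rewritten as $\exp(tX)\exp(tY)h(1)$ with $Y=-\mathrm{Ad}(h(1))X$), and concludes by transitivity plus equality of stabilizers. You instead invoke Lemma~\ref{generation_of_l_alg_G} directly and verify that each generating rank-one subgroup lies in the candidate set; the decisive computation for $i_{\alpha_0}(SU(2))$ --- writing $\exp(tH)u\exp(-tH)=\exp(tH)\exp(-t\,\mathrm{Ad}(u)H)\,u$ with $\exp(2\pi H)$ centralizing $SU(2)_\theta$ --- is exactly the identity the paper uses in its stabilizer step, so the two arguments hinge on the same local model. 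Your route avoids the building entirely but leans on the Pressley--Segal generation result (stated for the central extension $\widetilde{L_{alg}G}$; you should note that its image generates $L_{alg}G$), while the paper's route avoids the generation lemma at the cost of Mitchell's parabolic computation. For the easy inclusion the paper simply cites Mitchell's Theorem~4.7, whereas your eigenvalue/linear-independence-of-characters argument gives a direct proof; just be careful with the phrasing there, since the individual factors $e^{tX_i}$ are generally \emph{not} loops --- only the pair products $e^{tX_i}e^{tY_i}$ are, and it is these that periodicity forces to be trigonometric polynomials. Likewise the remark that every element of $SU(2)$ is of the form $e^Ae^B$ is not the relevant reduction (membership in the set is a condition on the whole curve $t\mapsto e^{tX}e^{tY}g$, not on a single group element); your subsequent conjugation argument is what actually carries the step.
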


\begin{proof}[Proof of theorem~\ref{characterization_of_quasi_algebraic_loops}]~
 \begin{enumerate}
   \item Choose $k\in L(G, \sigma)$ such that $H=kGk^{-1}$. Then $\mathfrak{h}=k\mathfrak{g}k^{-1}$. Hence $L_HG=kL_GGk^{-1}=kL_{alg}Gk^{-1}$.
   \item Let $L_{qalg}G$ be a quasi-algebraic subgroup of $L(G, \sigma)$. Then there is some $k\in L(G, \sigma)$ such that $L_{qalg}G=kL_{alg}Gk^{-1}$. Put $H=kGk^{-1}$ and $\mathfrak{h}=k\mathfrak{g}k^{-1}$ and we have reduced the statement of theorem~\ref{characterization_of_quasi_algebraic_loops} to the lemma~\ref{characterizationofalgebraicloops}.
 \end{enumerate}

\end{proof}

We now give the proof of lemma~\ref{characterizationofalgebraicloops}:

\begin{proof}[Proof of lemma~\ref{characterizationofalgebraicloops}] 
Define: $$L'_{alg}G:=\{e^{tX_1} e^{t Y_1}\dots e^{tX_n} e^{tY_n} g \hspace{1pt}|\hspace{2pt} g\in G,\hspace{1pt} X_i, Y_i \in \mathfrak{g},\hspace{1pt} e^{tX_i} e^{t Y_i}=e  \}$$

We have to show: $L'_{alg}G=L_{alg}G$.

\begin{itemize}
	\item[-] We show: $L'_{alg}G \subset L_{alg}G$.	
	First remark that $L'_{alg}G$ is a group of periodic mappings $c: \mathbb{R}\rightarrow G$ with period $1$. As $ge^{tX}=e^{tAd(g)X}g$, the product of two elements is again in $L'_{alg}G$. Checking the group axioms is then elementary. Thus $L'_{alg}G$ is a subgroup of $LG$. From theorem 4.7.\ in~\cite{Mitchell88} it follows that $c(t)=\exp{tX}\exp{tY}$ is in $L_{alg}G$ iff $\exp{tX}\exp{tY}=e$. As each element in $L'_{alg}G$ is generated by elements in $L_{alg}G$, we get: $L'_{alg}G \subset L_{alg}G$.
	\item[-] We show: $L_{alg}G\subset L'_{alg}G$.
	To prove this direction, we study the action of $L'_{alg}G$ on the building. We show:
	\begin{enumerate}
	  \item	$L'_{alg}G$ acts transitively on the set of chambers. 
	  \item The isotropy group of a chamber is the same for $L_{alg}G$ and $L'_{alg}G$.
	\end{enumerate}
Those two assertions contain the theorem, as for $g\in L_{alg}G$ we find the existence of a $g'\in L'_{alg}G$ such that $g\Delta_0=g' \Delta_0$ for some fixed chamber $\Delta_0$. Thus $g'^{-1}g\Delta_0=\Delta_0$. 
Thus the product $g'^{-1}g$ is in the isotropy group of $\Delta_0$ with respect to the $L_{alg}G$\ndash action, called $L_{alg}G_{\Delta_0}$.

Now the second assertion tells us: $g'^{-1}g\in L'_{alg}G_{\Delta_0}=L_{alg}G_{\Delta_0}$ Set $g'':=g'^{-1}g\in L'_{alg}G$. Then $g=g' g''\in L'_{alg}G$. Thus $L_{alg}G \subset L_{alg}G'$ and the lemma is proved. 
Thus we are left with checking assertions $1.$ and $2.$:
  \begin{itemize}
    \item[-] We prove:  The isotropy group of a chamber is the same for $L_{alg}G$ and $L'_{alg}G$.
  		Let $\mathfrak{B}_{alg}$ be the affine building, associated to $L_{alg}G$, $X\in \mathfrak{B} $ a cell of type $I$, $P_I$ its stabilizer in $LG_{\mathbb{C}}$. We know from \cite{Mitchell88}:
  		\begin{alignat*}{2}
  		L_{alg}G\cap P_I	&=\{\overline{h}\in L_{alg}G|h(t) \exp(tX)h^{-1}(1)=\exp(tX)\}=\\
  											&=\{\overline{h}\in L_{alg}G| h(t)=\exp(tX)h(1) \exp(-tX)\}\subset\\
  											&\subset L'_{alg}G
  		\end{alignat*}
  		The last inclusion is true, as $\exp(tX)h(1)\exp(-tX)=\exp(tX)\exp(tY)h(1)$ with $Y=-\textrm{Ad} h(1) X$ and $[h(1), \exp(tX)]=0$.
    \item[-] We prove: $L'_{alg}G$ acts transitively on the set of chambers.
          To this end, we remark that the action of $L_{alg}G\cap P_i\simeq SU(2)$ is transitive on the chambers having the panel corresponding to $i$ in its boundary. Transitivity on the building follows now as every pair of chambers can be connected by a gallery, which we can follow by repeated application of the transitivity on the chambers surrounding a panel.

 As those groups are in $L'_{alg}G$, the action of $L'_{alg}G$ is transitive on $\mathfrak{B}$ --- the result follows now. \qedhere
  \end{itemize}
\end{itemize}
\end{proof}

\begin{remark}
The strategy of proof is similar in spirit to the amalgam\ndash based local\ndash global constructions in the Kac-Moody theory.
\end{remark}

\section{Twin cities and Kac-Moody algebras}
\label{section:twin_cities_and_kac_moody_algebras}

In this section we describe an explicit realization of the twin city.

\noindent Define the two simplicial complexes:
$$
\begin{aligned}
\mathfrak{B}^+&=(L(G_{\mathbb C}, \sigma)/B^+\times \Delta)/\sim\,,\\
\mathfrak{B}^-&=(L(G_{\mathbb C}, \sigma)/B^-\times \Delta)/\sim\,. 
\end{aligned}
$$

In this description $B^+$ and $B^-$ denote opposite Borel subgroups, $\Delta$ denotes the fundamental alcove in a fixed torus $\mathfrak{t} \subset {\mathfrak{g}}$ and $\sim$ is the equivalence relation defined by $(f_1, Y_1)\sim (f_2, Y_2)$ iff $Y_1= Y_2\simeq Y$ and $f_1\simeq f_2 (\textrm{mod} \left(\textrm{Fix} \exp{tY}\right))$. Using the Iwasawa decomposition of $L(G_{\mathbb{C}}, \sigma)$ we get a second description:

$$
\begin{aligned}
\mathfrak{B}^+&=(L(G_{\mathbb R}, \sigma)/T\times \Delta)/\sim\,, \\
\mathfrak{B}^-&=(L(G_{\mathbb R}, \sigma)/T\times \Delta)/\sim\,. 
\end{aligned}
$$
Furthermore, we set 
$$\mathfrak{B}=\mathfrak{B}^+\cup \mathfrak{B}^-\,.$$

\begin{definition}[Apartment]
By abuse of notation, let $W_{\textrm{aff}}\subset L(G_{\mathbb{C}},\sigma)/B=L(G_{\mathbb{R}},\sigma)/T$ be a realization of the affine Weyl group of $G_{\mathbb{C}}$, $W^f_{\textrm{aff}}:=f W_{\textrm{aff}}f^{-1}$.
An apartment $\mathcal{A}_f^{\pm}\in \mathfrak{B}^{\pm}$ is the simplicial complex
$$\mathcal{A}_f^{\pm}:=(W^f_{\textrm{aff}}\times \Delta)/\sim\,.$$
\end{definition}

\begin{proof}~
\begin{itemize} 
	 \item[-] To check that the embedding $W_{\textrm{aff}}\subset G_{\mathbb{C}}/B=G_{\mathbb{R}}/T$ is well defined, let $\mathfrak{t}\subset \mathfrak{g}$ be a maximal Abelian subalgebra. Let $H:=\{g\in G|g\mathfrak{t}g^{-1}=\mathfrak{t}\}$. $H$ is a group. Let $X\in \mathfrak{t}$ be a regular element, $K:=\textrm{Fix}(X)\simeq T$. Then $W=H/T\subset G/T$.  
   \item[-] $\mathcal{A}_f^{\pm}$ is a thin Coxeter complex of type $W$. Thus $\mathcal{A}_f^{\pm}$ is an apartment. \qedhere 
   \end{itemize} 
  \end{proof}

\begin{lemma}
Two elements $(f, X), (g,Y) \in \mathfrak{B}^{\pm}$ are contained in the same connected component iff $f^{-1}g \in \widehat{L}(G, \sigma)^{\pm}$. 
\end{lemma}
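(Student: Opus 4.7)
The plan is to deduce this directly from part~(3) of Lemma~\ref{propertiesofgeometricuniversaltwinbuilding}, which already identifies the connected components of $\mathfrak{B}^{\pm}$ with the cosets $\widehat{L}(G,\sigma)/\widehat{L}(G,\sigma)^{\pm}$. The only genuinely new content is checking that the explicit realization $(L(G_{\mathbb{C}},\sigma)/B^{\pm}\times\Delta)/\sim$ introduced in this section produces the same components as the combinatorial $W$-metric construction.

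First I would observe that every pair $(f,X)$ lies in the closed chamber represented by the coset $fB^{\pm}$: by the equivalence relation $\sim$, the pair $(f,X)$ is identified only with pairs $(f',X)$ for which $f$ and $f'$ agree modulo the stabilizer of the face $\exp(tX)$, and this stabilizer is contained in the parabolic subgroup fixing $fB^{\pm}$. Hence the connected component of $\mathfrak{B}^{\pm}$ containing the simplex $(f,X)$ coincides with the connected component of the chamber $fB^{\pm}$ in the chamber graph (any two simplices of a common chamber are joined by a trivial gallery, and conversely galleries realize combinatorial adjacency geometrically).

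Second, apply Lemma~\ref{propertiesofgeometricuniversaltwinbuilding}(3): the connected component of $\mathfrak{B}^{\pm}$ through $fB^{\pm}$ is exactly $f\,\widehat{L}(G,\sigma)^{\pm}B^{\pm}/B^{\pm}$. Thus $(f,X)$ and $(g,Y)$ lie in the same component iff there exists $h\in\widehat{L}(G,\sigma)^{\pm}$ with $gB^{\pm}=fhB^{\pm}$, which, since $B^{\pm}\subset\widehat{L}(G,\sigma)^{\pm}$, is equivalent to $f^{-1}g\in\widehat{L}(G,\sigma)^{\pm}$. For the converse direction one can equivalently invoke the Bruhat decomposition of $\widehat{L}(G,\sigma)^{\pm}$ from Lemma~\ref{bruhatforloopgroups}: if $f^{-1}g\in\widehat{L}(G,\sigma)^{\pm}$, there is a unique $w\in W_{\textrm{aff}}$ with $f^{-1}g\in B^{\pm}wB^{\pm}$, whence $\delta^{\pm}(fB^{\pm},gB^{\pm})=w\in W_{\textrm{aff}}$ is finite and the two chambers lie in a common connected component by the very definition of $\pi_0(\mathfrak{B}^{\pm})$.

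The only potential obstacle is the first step, namely verifying that the geometric notion of connected component in the realization $(L(G_{\mathbb{C}},\sigma)/B^{\pm}\times\Delta)/\sim$ matches the combinatorial one used in Lemma~\ref{propertiesofgeometricuniversaltwinbuilding}. Once one checks that adjacency of chambers in the $W$-metric sense corresponds to sharing a codimension-one face in the realization (which is immediate from the standard identification of parabolic cosets with panels), this compatibility is automatic, and the statement of the lemma becomes a direct restatement of Lemma~\ref{propertiesofgeometricuniversaltwinbuilding}(3) in the coordinates provided by the realization.
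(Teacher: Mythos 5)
Your proposal is correct and follows the same route as the paper, which simply declares the lemma to be a restatement of Lemma~\ref{propertiesofgeometricuniversaltwinbuilding}(3). You additionally spell out the compatibility between the geometric realization $(L(G_{\mathbb{C}},\sigma)/B^{\pm}\times\Delta)/\sim$ and the combinatorial $W$\ndash metric components, which the paper leaves implicit; this is a welcome clarification rather than a deviation.
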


\begin{proof}
This is a restatement of lemma~\ref{propertiesofgeometricuniversaltwinbuilding}.
\end{proof}

We want now to embed the  twin city in the compact real form $\widehat{L}(\mathfrak{g}_{\mathbb{R}}, \sigma)$ of a Kac-Moody Lie algebra. It will appear as a tessellation of a space $H_{l, r}$ defined as the intersection of the sphere of radius $l, l\in \mathbb{R}$, with a horosphere $r_d= \pm r$. The two sheets of this sphere will correspond to $\mathfrak{B}^+$ resp.\ $\mathfrak{B}^-$.

We require that the regularity of $\widehat{L}(G, \sigma)$  is such that the restriction of the gauge action of $L(G, \sigma)$ on $L(\mathfrak{g},\sigma)$ to $H_{l,r}$ is polar. This condition is fulfilled for $\widehat{A_nG}^{\sigma}$ and for $\widehat{MG}^{\sigma}$ as is shown in \cite{Freyn09}. For Kac-Moody groups of $H^1$\ndash loops acting on the Kac-Moody algebra of $H^0$\ndash loops it is a consequence of Terng's work --- cf.~\cite{Terng95}.

To construct the embedding, we start with the conjugation action:
$$\widehat{\varphi}: \widehat{L}(G,\sigma) \times \widehat{L}(G,\sigma) \longrightarrow \widehat{L}(G, \sigma), \hspace{15pt}(g,h) \mapsto ghg^{-1} $$

\noindent By differentiation we get the adjoint action on the Lie algebra:
$$\widehat{\varphi}: \widehat{L}(G, \sigma) \times \widehat{L}(\mathfrak{g}, \sigma) \longrightarrow \widehat{L}(\mathfrak{g},\sigma), \hspace{15pt}(g,\widehat{u}) \mapsto g\widehat{u}g^{-1}$$

In contrast to the finite dimensional theory it is not possible to cover $\widehat{L}(\mathfrak{g}, \sigma)$ with maximal conjugate flats.

\noindent In contrast, the polarity assumption shows this to be possible for the restriction to $H_{l,r}$ (which is invariant under the adjoint action). Hence we conclude that $H_{l,r}$ is covered with finite dimensional conjugate Abelian subalgebras.  So in the end the situation is exactly as in the finite dimensional case; hence the algebra works out exactly the same way:

\noindent We find for a finite dimensional flat $\widehat{\mathfrak{a}}$  
$$ 
\begin{matrix}
\widehat{\varphi}:& \widehat{L}(G, \sigma) \times H_{l,r} &\longrightarrow &H_{l,r}, &(g,\widehat{u}) \mapsto g\widehat{u}g^{-1}\\
\widehat{\varphi}:& \widehat{L}(G,\sigma) \times \widehat{\mathfrak{a}}\cap H_{l,r} &\longrightarrow& H_{l,r},& (g,\widehat{u}) \mapsto g\widehat{u}g^{-1}\,.
\end{matrix}
$$

Taking $\widehat{\mathfrak{a}}$ to be the standard flat (i.e.\ for non-twisted groups: $\mathfrak{a}$ consists of constant loops), we find that $\mathfrak{a}_H:= \widehat{\mathfrak{a}}\cap H$ consists of triples $\widehat{X}=(X, r_c, r_d)$ where $r_c$ is defined by the condition $|\widehat{X}|=l$.

The exponential image of $\widehat{\mathfrak{a}}$ is the Cartan subalgebra $\widehat{T}\simeq T \oplus \mathbb S^1\oplus\mathbb S^1$. As $\widehat{\mathfrak{a}}$ is fixed by $\widehat{T}$, we get a well defined surjective  action 
$$\begin{matrix}
\widehat{\varphi}:& \widehat{L}(G,\sigma)/\widehat{T} \times \mathfrak{a}_H &\longrightarrow &H,&(g,u_H) \mapsto gu_Hg^{-1}\end{matrix}\,. $$

\noindent The surjectivity of this map follows from the polarity of the adjoint action (see theorem~\ref{polaractiononmg}).

Using the equivalence $\widehat{L}(G, \sigma)/\widehat{T} \simeq L(G,\sigma)/T$ we get:
$$\begin{matrix}
\varphi:& L(G,\sigma)/T \times \mathfrak{a}_H &\longrightarrow &H,&(g,u_H) \mapsto gu_Hg^{-1}\end{matrix}\,. $$

Now the inner automorphisms of $\widehat{\mathfrak{a}}$ are the elements of the affine Weyl group $W_A:=N(T)/T$, so we may further restrict $\widehat{\mathfrak{a}}_H$ to a fundamental domain of the action of $W_A$, denoted $\Delta$. Then the map
$$\begin{matrix}\varphi:& L(G,\sigma)/T\times \Delta&\longrightarrow& H,& (gT,\widehat{u}_H) \mapsto g\widehat{u}_Hg^{-1}\end{matrix}$$ is again surjective.

We can now construct a chamber complex by identifying $\Delta$ with a simplex $\mathfrak{B}$ with boundary and taking its $\widehat{L}(G, \sigma)$\ndash translates.

This construction proves the following theorems: 

\begin{theorem}[Embedding of the twin city]
\label{Embedding of the twin city}
For each algebra $L(\mathfrak{g}, \sigma)$ there is a $2$\ndash parameter family of embeddings for the twin city, parametrised by $r$ and the norm $l$. Those embeddings are equivariant in the sense that:
\begin{displaymath}
\begin{xy}
  \xymatrix{
      \mathfrak{B}_G \ar[rr]^{\widehat{L}(G,\sigma)} \ar[d]_{\varphi_{lr}}   & &   \mathfrak{B}_G  \ar[d]^{\varphi_{lr}}  \\
      \widehat{L}(\mathfrak{g}_R,\sigma) \ar[rr]^{\textrm{Ad}(\widehat{L}(G, \sigma))}           &  &   \widehat{L}(\mathfrak{g}_R,\sigma)  
  }
\end{xy}
\end{displaymath}
\end{theorem}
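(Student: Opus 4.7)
The plan is to take the map constructed explicitly in the paragraphs leading up to the theorem and verify the three properties needed: that it descends to the quotient defining $\mathfrak{B}$, that distinct classes have distinct images (so it is an embedding on each sheet, and the two sheets are distinguished), and that it intertwines the two group actions. Concretely, define
\[
\varphi_{l,r} \colon L(G,\sigma)/T \times \Delta \longrightarrow H_{l,r}, \qquad (gT, \widehat{u}_H) \longmapsto g\,\widehat{u}_H\, g^{-1},
\]
where $\widehat{u}_H = (u, r_c, \pm r) \in \widehat{\mathfrak{a}} \cap H_{l,r}$ and $r_c$ is fixed by the sphere-condition $|\widehat{u}_H| = l$. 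The sign of the $d$-component selects the sheet of $H_{l,r}$, which will furnish the splitting $\mathfrak{B}^+ \sqcup \mathfrak{B}^-$.

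First I would check well-definedness on $\sim$: if $(f_1, Y) \sim (f_2, Y)$ then $f_1^{-1} f_2 \in \mathrm{Fix}(\exp tY)$, so $f_2 = f_1 k$ with $k$ centralising $Y$; since $c$ and $d$ are central in $\widehat{\mathfrak{a}}$ (hence fixed by all of $L(G,\sigma)$ acting by conjugation on the algebra level once the extension is taken into account), conjugation by $k$ also fixes $\widehat{u}_H = (Y, r_c, \pm r)$, giving $f_1 \widehat{u}_H f_1^{-1} = f_2 \widehat{u}_H f_2^{-1}$. For surjectivity onto $H_{l,r}$ I invoke theorem~\ref{polaractiononmg}: the adjoint action is polar on $H_{l,r}$, so every orbit meets $\widehat{\mathfrak{a}}\cap H_{l,r}$, and $W_{\mathrm{aff}}$-reduction pushes any such point into the fundamental alcove $\Delta$. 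Injectivity on each sheet reduces, by the same polarity, to the statement that the slice representation identifies the stabiliser of $\widehat{u}_H \in \Delta$ with $\mathrm{Fix}(\exp tY)$, which is exactly the group quotiented out by $\sim$.

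Equivariance is a direct calculation: for $h \in \widehat{L}(G,\sigma)$ we have
\[
\varphi_{l,r}(hgT, \widehat{u}_H) = (hg)\widehat{u}_H(hg)^{-1} = \mathrm{Ad}(h)\bigl(g\widehat{u}_H g^{-1}\bigr) = \mathrm{Ad}(h)\,\varphi_{l,r}(gT, \widehat{u}_H),
\]
so the diagram commutes. The two-parameter family is then simply the collection $\{\varphi_{l,r}\}_{(l,r) \in \mathbb{R}\times\mathbb{R}^+}$; varying $l$ rescales the radius of the sphere, varying $r$ shifts the horosphere, and the construction is entirely parallel for every choice.

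The main obstacle is matching the abstract simplicial structure on $\mathfrak{B}$ (the chambers $L(G,\sigma)/B^\pm$ from the $W$-metric description) to the geometric tessellation of $H_{l,r}$ cut out by the conjugate flats. Concretely, one has to identify the fibre $\mathrm{Fix}(\exp tY)$ appearing in $\sim$ with the parabolic subgroup stabilising the face of the alcove containing $Y$, and then verify that passing to $B^\pm$-cosets on the one side corresponds, on the other, to picking the correct sheet $r_d = \pm r$. This is where the split into $\mathfrak{B}^+$ and $\mathfrak{B}^-$ really gets forced: a positive Borel stabilises a chamber of the alcove adjacent to $d = +r$, a negative Borel stabilises one on the opposite sheet, and the polarity hypothesis guarantees that these local identifications in $\widehat{\mathfrak{a}}$ propagate coherently by equivariance to the whole of $H_{l,r}$.
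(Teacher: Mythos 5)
Your proposal is correct and follows essentially the same route as the paper: the same explicit conjugation map $\varphi_{l,r}\colon L(G,\sigma)/T\times\Delta\to H_{l,r}$ on the standard flat, surjectivity from the polarity of the adjoint action (theorem~\ref{polaractiononmg}), reduction to the fundamental alcove by the affine Weyl group, and equivariance read off directly from the definition of the map. The additional verifications you supply (well-definedness on $\sim$, the stabiliser/slice argument for injectivity, and the explicit equivariance computation) are details the paper leaves implicit, but they do not change the argument.
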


We call this $2$\ndash parameter family the thickened twin city. 

This construction yields the following result:

\begin{cor}
\label{cor:tori=apartment_compact}
Suppose the adjoint action of $\widehat{L}(G, \sigma)$ induces a polar action. Every torus in a Kac-Moody group $\widehat{L}(G, \sigma)$ corresponds to a complete twin apartment of the thickened twin city.  
\end{cor}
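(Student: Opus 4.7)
The plan is to combine the polarity assumption with the explicit embedding $\varphi_{l,r}$ just constructed, reducing the claim to the statement that a flat in $\widehat{L}(\mathfrak{g},\sigma)$ intersects $H_{l,r}$ in a subset that realizes a complete twin apartment, one sheet in $\mathfrak{B}^+$ and one in $\mathfrak{B}^-$.

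First I would set up the correspondence between tori and flats. Let $\widehat{T}'\subset\widehat{L}(G,\sigma)$ be an arbitrary torus (of finite type) and let $\widehat{\mathfrak{t}}'\subset\widehat{L}(\mathfrak{g},\sigma)$ be its Lie algebra. By Theorem~\ref{polaractiononmg} together with the conjugation property (which is part of the polarity hypothesis), there is $g\in\widehat{L}(G,\sigma)$ such that $\widehat{T}'=g\widehat{T}g^{-1}$ for the standard maximal torus $\widehat{T}$; equivalently $\widehat{\mathfrak{t}}'=\mathrm{Ad}(g)\widehat{\mathfrak{a}}$ for the standard flat $\widehat{\mathfrak{a}}=\mathfrak{a}\oplus\mathbb{R}c\oplus\mathbb{R}d$. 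The invariance of $H_{l,r}$ under the adjoint action (which was used to even define the map $\varphi_{l,r}$) gives $\mathrm{Ad}(g)(\widehat{\mathfrak{a}}\cap H_{l,r})=\widehat{\mathfrak{t}}'\cap H_{l,r}$.

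Next I would identify this subset with an apartment on each side. By construction, the standard flat intersects the sheet $r_d=-r$ in a translate of $\mathfrak{a}_H$ and the sheet $r_d=+r$ in another, and the $\varphi_{l,r}$-preimage of $\widehat{\mathfrak{a}}\cap H_{l,r}$ in $\mathfrak{B}^{\pm}$ is exactly the standard apartment $\mathcal{A}_e^{\pm}=(W_{\textrm{aff}}\times\Delta)/\sim$, since $\Delta$ is a fundamental domain for $W_{\textrm{aff}}$ on $\mathfrak{a}_H$ and all $W_{\textrm{aff}}$-translates of $\Delta$ arise as alcoves in the flat. Translating by $g$ and using the equivariance square in Theorem~\ref{Embedding of the twin city} yields $\varphi_{l,r}^{-1}(\widehat{\mathfrak{t}}'\cap H_{l,r})=\mathcal{A}_g^+\cup\mathcal{A}_g^-$, exhibiting a twin apartment attached to $\widehat{T}'$. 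The twin structure is automatic because both sheets of $H_{l,r}$ are simultaneously met by the single flat $\widehat{\mathfrak{t}}'$ (it contains the line $\mathbb{R}d$), and opposite chambers in the twin apartment correspond, via $\varphi_{l,r}$, to alcoves related by the reflection $r_d\mapsto -r_d$ preserving $\widehat{\mathfrak{t}}'$.

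Finally I would check completeness: every alcove of the Coxeter tessellation of $\widehat{\mathfrak{t}}'\cap H_{l,r}$ on either sheet is the image of some $(wf\cdot\Delta)$ under $\varphi_{l,r}$, because the conjugate affine Weyl group $W^g_{\textrm{aff}}=gW_{\textrm{aff}}g^{-1}$ normalizes $\widehat{T}'$ and acts by translations/reflections inside $\widehat{\mathfrak{t}}'$; nothing is missed, so the apartment is complete. Conversely, any complete twin apartment arises this way because, by definition, $\mathcal{A}_f^{\pm}$ is built from $W^f_{\textrm{aff}}$ and hence its stabilizer contains the torus $f\widehat{T}f^{-1}$. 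The main technical obstacle I foresee is the verification that the intersection $\widehat{\mathfrak{t}}'\cap H_{l,r}$ really contains \emph{both} full sheets' worth of alcoves rather than only a piece; this requires the polarity hypothesis, since without it one cannot guarantee that the flat $\widehat{\mathfrak{t}}'$ meets the two connected components of $H_{l,r}$ in a way that tessellates each sheet under $W_{\textrm{aff}}$. All other verifications reduce to the equivariance of $\varphi_{l,r}$ already established.
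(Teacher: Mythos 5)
Your proposal is correct and follows essentially the same route as the paper: the corollary is presented there as a direct consequence of the embedding construction of $\varphi_{l,r}$, and the reduction of an arbitrary torus of finite type to the standard one by conjugation (via theorem~\ref{polaractiononmg}) together with equivariance of the embedding is exactly the argument the paper uses in the second part of the proof of theorem~\ref{tori=apartment}. Your additional verifications (that the flat meets both sheets $r_d=\pm r$ and that the $W_{\textrm{aff}}$-tessellation of $\widehat{\mathfrak{t}}'\cap H_{l,r}$ is complete) are sound elaborations of what the paper leaves implicit.
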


More generally, we have:

\begin{theorem}
\label{tori=apartment}
There is a correspondence between twin apartments in the twin city and tori of finite type in $\widehat{L}(G_{\mathbb{C}}, \sigma)$.
\end{theorem}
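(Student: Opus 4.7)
The plan is to establish the correspondence in two steps: first identify a canonical pair (the standard torus and the standard twin apartment), then extend to all tori by conjugation, using the conjugation property as the bridge.

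First I would match the standard twin apartment $\mathcal{A}^+_e \cup \mathcal{A}^-_e$ to the standard Cartan subgroup $\widehat{T}$. By the apartment definition, $\mathcal{A}^{\pm}_e = (W_{\textrm{aff}} \times \Delta)/\sim$, and $W_{\textrm{aff}} = N(\widehat{T})/\widehat{T}$, so $\widehat{T}$ controls both the simplex $\Delta$ (as a fundamental domain in its Cartan subalgebra) and the affine Weyl group acting on it; the apartment is thereby determined canonically. For an arbitrary torus $\widehat{T}'$ of finite type, the conjugation property (Theorem~\ref{polaractiononmg}, assumed to hold for $\widehat{L}(G, \sigma)$) furnishes some $f \in \widehat{L}(G, \sigma)$ with $\widehat{T}' = f \widehat{T} f^{-1}$, and since $\widehat{L}(G, \sigma)$ acts isometrically on $\mathfrak{B}$ one associates to $\widehat{T}'$ the twin apartment $\mathcal{A}^+_f \cup \mathcal{A}^-_f$.

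Well-definedness of this assignment requires showing that if $f \widehat{T} f^{-1} = f' \widehat{T} f'^{-1}$, then $\mathcal{A}^{\pm}_f = \mathcal{A}^{\pm}_{f'}$. This reduces to the observation that $f'^{-1} f \in N(\widehat{T})$ projects to an element of $W_{\textrm{aff}}$, which preserves the standard apartment setwise by construction. For the reverse direction I would use the algebraic structure inside each twin building $(\Delta^+, \Delta^-) \subset \mathfrak{B}$: an arbitrary twin apartment $\mathcal{A}^+ \cup \mathcal{A}^-$ lies inside some such pair, and by the Abramenko--Ronan result recalled at the end of Section~\ref{section:summary_of_the_algebraic_theory} it is the convex hull of a pair of opposite chambers $c^+, c^-$. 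Their stabilizers give opposite Borel subgroups $(B^+, B^-)$, and the summary at the end of Section~\ref{section:regularity} identifies $B^+ \cap B^-$ with a torus $\widehat{T}''$ of finite type inside the quasi-algebraic subgroup $G(B^+, B^-)$; this is the torus to be assigned to $\mathcal{A}^+ \cup \mathcal{A}^-$.

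The main obstacle is showing that the two constructions are mutually inverse, i.e.\ that the torus recovered from a twin apartment via opposite chambers reproduces that same apartment when fed back into the conjugation construction. This boils down to uniqueness of the twin apartment containing a prescribed pair of opposite chambers, which holds in every algebraic affine twin building by Abramenko--Ronan; since by Lemma~\ref{propertiesofgeometricuniversaltwinbuilding} each pair $(\Delta^+, \Delta^-)$ inside $\mathfrak{B}$ is an honest algebraic twin building, the uniqueness transfers to the twin city and closes the bijection. A subsidiary point to verify is that the torus of finite type produced from opposite Borel subgroups genuinely lies in the conjugacy class of $\widehat{T}$, but this is immediate since all opposite Borel pairs are conjugate (Lemma in Section~\ref{section:regularity}) and conjugation takes the standard Cartan to the Cartan of the conjugated pair.
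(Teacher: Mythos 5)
Your proposal is correct and follows essentially the same route as the paper: one direction via the conjugation property (all tori of finite type are conjugate, so an arbitrary torus is $f\widehat{T}f^{-1}$ and is matched with the $f$\ndash translate of the standard apartment), the other by reducing an arbitrary twin apartment to the algebraic twin building containing it. Where the paper simply cites R\'emy's apartment--Cartan correspondence for the quasi-algebraic subgroup, you unpack it explicitly via opposite chambers, opposite Borel subgroups and $B^+\cap B^-\simeq\widehat{T}$, and you additionally verify well-definedness and that the two assignments are mutually inverse --- details the paper leaves implicit.
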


\begin{remark}
Bertrand Rémy proves a similar result for arbitrary algebraic Kac-Moody groups, showing a correspondence between twin apartments and Cartan subalgebras --- cf.~\cite{Remy02}, section~10.4.3. . 
\end{remark}

\begin{remark}
 For Kac-Moody groups of the classical type, the result can be proven by linear representations of the twin cities as complexes of periodic flags in Hilbert spaces --- cf.~\cite{Freyn09} for the case $\widetilde{A}_n$ and a sketch for the other types and \cite{Freyn10b} for the details. The possibility of a similar construction for Kac-Moody groups of the exceptional types is an interesting open problem.
\end{remark}

\begin{proof}[Proof of theorem~\ref{tori=apartment}]
The embedding shows this theorem for twin apartments corresponding to tori in the Kac-Moody group $\widehat{MG}^{\sigma}$. 
We have to prove two directions:
\begin{itemize}
 \item[-] Let $A$ be an arbitrary twin apartment in the affine twin building $\Delta_+\cup\Delta_-\subset \mathfrak{B}^+\cup \mathfrak{B}^-$. Let $G(\Delta_+, \Delta_-)$ be the quasi-algebraic group associated to $\Delta_+\cup\Delta_-$. By Bertrand Rémy's result, $A$ corresponds to a torus in $G(\Delta_+, \Delta_-)$. The embedding of $G(\Delta_+, \Delta_-)$ as a subgroup in $\widehat{L}(G, \sigma)$ identifies $A$ with a torus in $\widehat{L}(G, \sigma)$.
\item[-] Let $T'$ be a torus in $\widehat{L}(G, \sigma)$.  As all tori of finite type are conjugate, there is a $g$ such that $T'=gTg^{-1}$, where $T$ is the standard torus. $T'$ corresponds to the apartment that is the translate by $g$ of the apartment corresponding to $T$.
\end{itemize}
\end{proof}

\begin{remark}
A second possible proof of theorem~\ref{tori=apartment} consists in identifying the stabilizers of twin apartments in $\mathfrak{B}$ and of tori in $\widehat{L}(G, \sigma)$. This is an adaption of the strategy used by Bertram Rémy in \cite{Remy02} to our setting.
A third proof is implicit in the description of apartment systems in section~\ref{section:the_spherical_building_at_infinity}.
A fourth proof constructs for each apartment a compact real form such that the apartment corresponds to a torus of this compact real form. Then the result follows from corollary~\ref{cor:tori=apartment_compact}.
\end{remark}

Let us remark that a further generalization is possible to Kac-Moody symmetric spaces:

\begin{theorem}
Every flat of finite type in a Kac-Moody symmetric space $\widehat{MG}^{\sigma}$ corresponds to a complete twin apartment of the thickened twin city. 
\end{theorem}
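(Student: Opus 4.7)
The plan is to reduce this statement to theorem~\ref{tori=apartment} by establishing a natural bijective correspondence between flats of finite type in the Kac-Moody symmetric space associated to $\widehat{MG}^{\sigma}$ and tori of finite type in the complexified Kac-Moody group. Recall that such a symmetric space arises from an involutive automorphism $\theta$ of $\widehat{L}(\mathfrak{g}, \sigma)$ yielding a Cartan-type decomposition $\widehat{L}(\mathfrak{g}, \sigma) = \widehat{\mathfrak{k}} \oplus \widehat{\mathfrak{p}}$, and a flat of finite type is (by definition) the exponential image of a finite-dimensional abelian subspace $\widehat{\mathfrak{a}} \subset \widehat{\mathfrak{p}}$.

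First, I would verify that the isotropy representation of the ``compact'' subgroup $\widehat{K}$ on $\widehat{\mathfrak{p}}$ is polar. This transfers from the polarity of the adjoint action of $\widehat{MG}^{\sigma}$ on $\widehat{L}(\mathfrak{g}, \sigma)$ guaranteed by theorem~\ref{polaractiononmg}, via the standard procedure of restricting a polar $G$-action to its $\theta$-fixed subgroup acting on the $(-1)$-eigenspace. Consequently all maximal flats of finite type in the symmetric space are $\widehat{K}$-conjugate, which parallels the conjugacy of all tori of finite type from section~\ref{section:regularity}.

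Second, I would associate to a flat $F = \exp(\widehat{\mathfrak{a}})$ of finite type a torus $T_F$ of finite type in $\widehat{L}(G_{\mathbb{C}}, \sigma)$, by complexifying $\widehat{\mathfrak{a}}$ to a full Cartan subalgebra and exponentiating. For a standard flat, this is immediate from the construction preceding theorem~\ref{Embedding of the twin city}: the standard finite-dimensional flat $\widehat{\mathfrak{a}}$ exponentiates to the standard torus $\widehat{T} \simeq T \oplus S^1 \oplus S^1$. For a general flat of finite type, the conjugacy from the first step, combined with the conjugacy of tori, produces the corresponding $T_F$ and shows the assignment $F \mapsto T_F$ is well defined and bijective up to $\widehat{K}$-conjugacy. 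Then theorem~\ref{tori=apartment} identifies $T_F$ with a complete twin apartment $\mathcal{A}_F$ of the thickened twin city, and the equivariance diagram of theorem~\ref{Embedding of the twin city} ensures that the image of $F$ under $\varphi_{l,r}$ coincides with $\mathcal{A}_F$.

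The main obstacle will be checking that a finite-dimensional abelian subspace $\widehat{\mathfrak{a}} \subset \widehat{\mathfrak{p}}$ really does give rise to a \emph{full} torus of finite type (rather than only to a flat torus in the symmetric space), i.e.\ that the passage from a Cartan subspace of $\widehat{\mathfrak{p}}$ to a full Cartan subalgebra of $\widehat{L}(\mathfrak{g}, \sigma)$ behaves as in the finite-dimensional theory. This depends critically on the polarity hypothesis and on the extension of the finite-dimensional duality between Cartan subspaces and Cartan subalgebras to the affine Kac-Moody setting, and is the only nontrivial input beyond the cited theorems.
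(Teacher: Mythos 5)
Your overall strategy --- reduce the statement to the torus/apartment correspondence by exhibiting the flats of finite type as the conjugate sections of a polar action --- is the same blueprint the paper follows, but there is a genuine gap at your first step. You assert that polarity of the isotropy representation of $\widehat{K}$ on $\widehat{\mathfrak{p}}$ ``transfers from the polarity of the adjoint action \dots\ via the standard procedure of restricting a polar $G$-action to its $\theta$-fixed subgroup acting on the $(-1)$-eigenspace.'' No such standard procedure exists: restricting a polar action to a subgroup acting on an invariant subspace does not in general preserve polarity, and even in finite dimensions the polarity of $s$-representations (equivalently, the $K$-conjugacy of maximal abelian subspaces of $\mathfrak{p}$) is a separate theorem of Cartan, not a formal consequence of the polarity of the adjoint representation. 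The paper treats exactly this as the essential nontrivial input: it invokes Christian Gross's theorem that the $s$-representations of involutions of affine Kac-Moody algebras are polar for $H^1$-groups acting on $H^0$-algebras, together with the extension of that result to the holomorphic, smooth and $k$-differentiable regularity classes. Without that input, the conjugacy of flats of finite type --- on which everything else in your argument rests --- is unsupported.

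Your second concern, whether a finite-dimensional maximal abelian subspace $\widehat{\mathfrak{a}}\subset\widehat{\mathfrak{p}}$ really extends to a full Cartan subalgebra so that a flat corresponds to a genuine torus of finite type, is correctly identified as nontrivial, but you leave it open. The paper avoids it by not passing through tori at all: once polarity of the $s$-representation is in hand, it reruns the embedding construction of theorem~\ref{Embedding of the twin city} directly for the $s$-representation --- covering the relevant isoparametric slice by conjugate finite-dimensional sections and identifying each section with a twin apartment of the thickened twin city --- so flats are matched to apartments without first being promoted to Cartan subalgebras. If you want to keep your detour through theorem~\ref{tori=apartment}, you must actually prove the Cartan-subspace-to-Cartan-subalgebra extension in this infinite-dimensional setting rather than flag it.
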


For an overview of the theory and the definition of Kac-Moody symmetric spaces see \cite{Freyn07}, for a detailed description see \cite{Freyn09}. Their adjoint actions correspond to $s$\ndash representations of involutions of affine Kac-Moody algebras. Christian Gross proved in \cite{Gross00} those to be polar for the action of $H^1$\ndash Kac-Moody groups on $H^0$\ndash Kac-Moody algebras. The result remains true in all other regularity conditions, i.e.\ smooth loops, $k$\ndash differentiable loops and holomorphic loops on  $A_n$ resp.\ $\mathbb{C}^*$.  The last case corresponds to the $s$\ndash representations of Kac-Moody symmetric spaces --- cf.~\cite{Freyn09}. The other settings --- especially the $s$\ndash representation studied in \cite{Gross00} --- are completions. Now the proof follows the blueprint of the Kac-Moody group case which we described.

Hence the infinite dimensional theory mirrors the finite dimensional theory.

\section{Topology and geometry of \boldmath$\mathfrak{B}$}
\label{section:topology_and_geometry}

There are three sundry ways to define a topology (resp.\ geometry) on the twin cities:
\begin{enumerate}
	\item A structure on the geometric realization of $\mathfrak{B}$.
	\item A structure on the set of chambers in $\mathfrak{B}$.
	\item A structure on the set of buildings in $\mathfrak{B}$.
\end{enumerate}

We will discuss these 3 ways in the following three subsections.

\begin{samepage}
\subsection{The structure on the geometric realization of \boldmath$\mathfrak{B}$}

The embedding of the twin city into spaces $H_{l,r}$ shows:

\begin{theorem}
 Let $\widehat{L}(G, \sigma)$ be a Hilbert\ndash, Banach\ndash\/ or Fréchet\ndash Lie group. The geometric realization of the positive (resp.\ negative) component $\mathfrak{B}^+$ (resp.\ $\mathfrak{B}^-$) of the twin city carries the same structure. 
\end{theorem}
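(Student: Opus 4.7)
The plan is to transport the structure from the ambient Kac-Moody algebra via the explicit embedding constructed in section~\ref{section:twin_cities_and_kac_moody_algebras}. Recall that theorem~\ref{Embedding of the twin city} provides, for each admissible pair $(l,r)$, an $\widehat{L}(G,\sigma)$-equivariant map $\varphi_{l,r}:\mathfrak{B}^{\pm}\longrightarrow H_{l,r}\subset \widehat{L}(\mathfrak{g}_{\mathbb{R}},\sigma)$ which realizes $\mathfrak{B}^{\pm}$ as a tessellation of one sheet of $H_{l,r}$. Since $\widehat{L}(\mathfrak{g},\sigma)$ is by hypothesis a Hilbert-, Banach-, or Fréchet space (this structure being inherited from the loop realization and preserved by the $2$-dimensional central/semidirect extension), and since $H_{l,r}$ is cut out as the intersection of a sphere and an affine horosphere, $H_{l,r}$ is a closed subset that inherits the corresponding topological and smooth structure as a submanifold of the ambient locally convex space.

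First, I would verify that $\varphi_{l,r}$ is injective when restricted to the geometric realization (as opposed to just the chamber complex): this uses that two distinct points $(g_1T,x_1)$ and $(g_2T,x_2)$ in $(L(G,\sigma)/T\times \Delta)/\sim$ map to distinct conjugates $g_i \widehat{u}_{x_i} g_i^{-1}$, the polarity of the adjoint action guaranteeing that the fundamental domain $\Delta$ meets each orbit in exactly one $W_A$-orbit point. Next, I would check that $\varphi_{l,r}$ is continuous with respect to the combinatorial/affine topology on the geometric realization of $\mathfrak{B}^{\pm}$: on each closed alcove $g\cdot\Delta$ the map is smooth since it is built from the exponential and the conjugation action, both of which are smooth in the ambient Hilbert/Banach/Fréchet setting. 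Gluing along faces is compatible with the identifications in the equivalence relation $\sim$, so the map is well-defined and continuous on the entire geometric realization.

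Finally, I would pull back the induced structure from $H_{l,r}$: the image $\varphi_{l,r}(\mathfrak{B}^{\pm})$ is one sheet of $H_{l,r}$ (determined by the sign of $r_d$), and since $\varphi_{l,r}$ is an equivariant bijection onto this sheet that is smooth on each alcove, the geometric realization of $\mathfrak{B}^{\pm}$ inherits the Hilbert-, Banach-, or Fréchet-structure of $\widehat{L}(\mathfrak{g}_{\mathbb{R}},\sigma)$ restricted to that sheet. Equivariance of $\varphi_{l,r}$ under the $\widehat{L}(G,\sigma)$-action ensures the pulled-back structure is independent (up to isomorphism) of the chosen base alcove, and independence of the parameters $(l,r)$ follows because rescaling $l$ and shifting $r$ produce diffeomorphic sheets.

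The main obstacle will be the verification that $\varphi_{l,r}$ is actually a homeomorphism onto its image and not merely a continuous bijection — equivalently, that the inverse is continuous in the chosen functional-analytic topology. In the Hilbert case this follows from Terng's polar coordinate decomposition of the principal orbits in \cite{Terng95}; in the Banach and Fréchet cases the analogous statement rests on theorem~\ref{polaractiononmg} and the fact that the normal exponential to a principal orbit is a local diffeomorphism of the appropriate regularity. Once this local slice statement is established, the global structure on $\mathfrak{B}^{\pm}$ follows by equivariance and the fact that the $\widehat{L}(G,\sigma)$-translates of a single alcove cover the entire geometric realization.
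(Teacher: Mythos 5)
Your proposal takes essentially the same approach as the paper: the paper's entire justification is the single introductory sentence ``The embedding of the twin city into spaces $H_{l,r}$ shows:'', i.e.\ it transports the Hilbert\ndash, Banach\ndash\/ or Fréchet structure of the ambient algebra onto $\mathfrak{B}^{\pm}$ through the equivariant embedding of theorem~\ref{Embedding of the twin city}. Your additional verifications (injectivity on the geometric realization, smoothness on each alcove, and the slice argument via theorem~\ref{polaractiononmg} and Terng's results for the homeomorphism-onto-image step) are exactly the details the paper leaves implicit, so the underlying argument is the same.
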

\end{samepage}

Using the results about the analytic structure of the various Kac-Moody groups, as developed in \cite{PressleySegal86}, \cite{Terng95}, \cite{Popescu05}, and \cite{Freyn09}, we get the following corollary:

\begin{cors}[The most important examples]~
\label{bistamefrechet}\begin{enumerate}
\item Each city associated to $\widehat{MG}$ carries a natural tame Fréchet structure.
\item Each city associated to $\widehat{A_nG}$ carries a natural Banach space structure.
\item Each city associated to $\widehat{L^{\infty}G}$ carries a natural tame Fréchet structure.
\item Each city associated to $\widehat{L^{1}G}$ carries a natural Hilbert space structure.
\end{enumerate}
\end{cors}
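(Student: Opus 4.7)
The plan is to deduce the corollary directly from the preceding theorem on the geometric realization of $\mathfrak{B}^{\pm}$ together with the known analytic structure of the four completed Kac-Moody algebras. The embedding $\varphi_{l,r}$ of Theorem~\ref{Embedding of the twin city} realises each (positive or negative) part of the city as a subset of the ambient Kac-Moody algebra $\widehat{L}(\mathfrak{g},\sigma)$ lying inside the invariant hypersurface $H_{l,r}$. Since the embedding is $\widehat{L}(G,\sigma)$\ndash equivariant and is assembled by sweeping a single fundamental alcove $\Delta$ under the conjugation action, the relevant topological/analytic structure on $\widehat{L}(G,\sigma)$ transfers directly to the geometric realisation of $\mathfrak{B}^{\pm}$. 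This is exactly what the preceding theorem packages, so the corollary reduces to identifying, regularity by regularity, the class of Lie group to which $\widehat{L}(G,\sigma)$ belongs.

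First I would treat the four cases separately, in each case citing the analytic structure of the corresponding completion. For $\widehat{MG}$ (real\ndash analytic loops) the relevant fact is that $MG$ is the projective limit of the Banach loop groups $A_nG$, giving a tame Fréchet Lie group structure in the sense of Hamilton; this is established in \cite{Freyn09} and \cite{PressleySegal86}. For $\widehat{A_nG}$ (holomorphic loops on the annulus $A_n$) the loop group is a complex Banach Lie group in the natural sup\ndash norm on $A_n$, again cf.\ \cite{PressleySegal86}. For $\widehat{L^{\infty}G}$ (smooth loops) the Fréchet topology comes from the family of $C^k$\ndash seminorms and tameness is verified in \cite{Popescu05} and \cite{Freyn09}. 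For $\widehat{L^1G}$ (Sobolev $H^1$\ndash loops) the Hilbert structure is the one introduced by Terng in \cite{Terng95}.

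Having identified the analytic class in each regularity, I would invoke the preceding theorem to transport the structure onto the geometric realisation of $\mathfrak{B}^{\pm}$. Concretely, under $\varphi_{l,r}$ each apartment is a finite\ndash dimensional polytope sitting in a finite\ndash dimensional flat $\widehat{\mathfrak{a}}\cap H_{l,r}$, while the overall city is the orbit of this polytope under the adjoint action of $\widehat{L}(G,\sigma)$; since this action is continuous (respectively smooth, holomorphic) for the relevant topology and polar on $H_{l,r}$, the subspace topology coming from $\widehat{L}(\mathfrak{g},\sigma)$ inherits exactly the same type of structure. Item~(i)--(iv) of the corollary then follow one by one.

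The only step with any real content is the analytic input in each individual case, but these are precisely the facts collected in the references listed just before the corollary statement. The potential obstacle, namely checking that the orbit map $g\mapsto g\widehat{u}_Hg^{-1}$ does not worsen the regularity (for instance that tameness survives the quotient by the isotropy of an alcove), is handled by the polarity of the adjoint action and the finite\ndash dimensionality of $\Delta$, both of which are already in place via theorem~\ref{polaractiononmg} and the construction in Section~\ref{section:twin_cities_and_kac_moody_algebras}. Thus the proof is essentially a bookkeeping exercise combining the embedding theorem with the regularity statements for the four completions.
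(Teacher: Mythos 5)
Your proposal is correct and follows essentially the same route as the paper: the paper derives this corollary directly from the preceding theorem on the geometric realization of $\mathfrak{B}^{\pm}$ (itself obtained from the embedding into $H_{l,r}$) together with the analytic structure of the four completions as established in \cite{PressleySegal86}, \cite{Terng95}, \cite{Popescu05}, and \cite{Freyn09}. Your case-by-case identification of the regularity classes and the remark on transporting the structure via the equivariant embedding is exactly the intended argument, spelled out in somewhat more detail than the paper gives.
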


Using the description of $\widehat{M\mathfrak{g}}$ as inverse limit of the algebras $\widehat{A_n\mathfrak{g}}$  --- cf.~ \cite{Omori97} and \cite{Freyn09}, we find this structure reflected in an inverse limit system $\{\mathfrak{B}_{\widehat{MG}}, \displaystyle\lim_{\longleftarrow}\mathfrak{B}_{A_nG}\}$. Thus the twin city for $\widehat{MG}$ is surrounded by a cloud of buildings corresponding to groups of weaker regularity.

\subsection{The structure on the set of chambers in \boldmath $\mathfrak{B}$}

As chambers in $\mathfrak{B}$ correspond bijectively to elements in the quotient $MG/T$, the space of chambers inherits the tame Fréchet topology of $MG/T$. Study the gauge action of $MG$ on $M\mathfrak{g}$. By theorem~\ref{polaractiononmg} it is a polar action. Let $X\in \mathfrak{t}$ be an element in the Lie algebra of $T$, such that $\{MG\cdot X\}$ is a principal orbit. As the stabilizer of $X$ is $T$, we have $\{MG\cdot X\}\simeq MG/T$.

Hence the space of chambers can be identified with an isoparametric submanifold. So the structure of the space of chambers is well understood. 

\begin{theorem}
 Let $S$ be an isoparametric PF\ndash submanifold of a Hilbert space. Suppose $S$ is homogeneous and it is the principal orbit of the gauge action of a Hilbert loop group $L^1G^{\sigma}$. Then the points in the isoparametric submanifold correspond bijectively to chambers in the associated city. Furthermore curvature spheres correspond to panels. 
\end{theorem}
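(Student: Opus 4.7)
The plan is to reduce the theorem to two already-available identifications: the one between a principal orbit of the polar gauge action and $L^1G^\sigma/T$, and the one between chambers of the city and $L^1G^\sigma/T$ provided by the equivariant embedding of Section~\ref{section:twin_cities_and_kac_moody_algebras}; the second assertion will then follow by matching rank\ndash one subgroups on each side.

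First I would combine two $L^1G^\sigma$\ndash equivariant identifications. On the geometric side, because the gauge action of $L^1G^\sigma$ on the $H^0$\ndash Kac\ndash Moody algebra is polar (Terng, restated just before Theorem~\ref{Embedding of the twin city}) and $S$ is a principal orbit, the stabilizer of any point of $S$ is a maximal torus $T$ of finite type, so $S \simeq L^1G^\sigma/T$ as a homogeneous Hilbert submanifold. On the combinatorial side, the realization $\varphi_{l,r}$ of Theorem~\ref{Embedding of the twin city} applied to the $H^1$\ndash regularity writes the positive part of the city as $(L^1G^\sigma/T \times \Delta)/\!\sim$, whose top\ndash dimensional simplices are therefore parametrised by $L^1G^\sigma/T$. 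Composing the two bijections gives a canonical $L^1G^\sigma$\ndash equivariant bijection between the points of $S$ and the chambers of the associated city; equivariance is automatic since both identifications intertwine the left action of $L^1G^\sigma$.

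For the second assertion I would match each curvature sphere with a panel by means of the rank\ndash one subgroup that acts transitively on it. By the classification of curvature distributions of a homogeneous isoparametric PF submanifold, every curvature sphere through a point of $S$ is an orbit of a rank\ndash one loop subgroup of the form $f\,i_\alpha(SU(2))\,f^{-1}$ attached to a simple affine root $\alpha$; on the combinatorial side, by Lemma~\ref{generation_of_l_alg_G}, the $s$\ndash panel through a chamber $fB^+$ is exactly the orbit of the same conjugate subgroup $f\,i_\alpha(SU(2))\,f^{-1}$, since the $i_\alpha(SU(2))$ are precisely the rank\ndash one subgroups responsible for $s$\ndash moves. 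Under the bijection constructed above, these two orbit families therefore coincide.

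The step I expect to be the main obstacle is precisely this last matching: one must verify that the rank\ndash one subgroup attached by Terng to a curvature distribution coincides with the simple\ndash root subgroup $i_\alpha(SU(2))$ that cuts out the corresponding $s$\ndash panel, rather than being merely isomorphic to it. This identification is implicit in the construction of $\varphi_{l,r}$, where the standard flat $\widehat{\mathfrak{a}}$ and its root decomposition were used both to tessellate $H_{l,r}$ and to label the simplices of the city; making it explicit amounts to checking that the exponentials of the root vectors normal to the principal orbit at a chosen base point are exactly the $i_\alpha(SU(2))$ subgroups, which reduces to a direct computation in the standard apartment.
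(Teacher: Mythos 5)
Your proposal matches the paper's own argument: the paper identifies the space of chambers with $L(G,\sigma)/T$ via the realization of the city and identifies the principal orbit with the same quotient because the isotropy group of a regular element of $\mathfrak{t}$ under the polar gauge action is $T$, exactly as you do. Your additional matching of curvature spheres with panels via the rank\ndash one subgroups $i_\alpha(SU(2))$ is consistent with (and more explicit than) the paper, which asserts that part without further argument.
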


For the definition of isoparametric submanifolds see ~\cite{Terng89} and \cite{PalaisTerng88}.
All known isoparametric submanifolds with higher codimension are of this type.

Conversely Ernst Heintze and Xiaobo Liu --- cf.~\cite{HeintzeLiu} --- prove the following theorem:
\begin{theorem}
 A complete, connected, full, irreducible isoparametric submanifold $M$ of an infinite dimensional Hilbert space $V$ with codimension $\neq 1$ is a principal orbit of a polar action.
\end{theorem}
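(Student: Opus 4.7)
The plan is to adapt Thorbergsson's finite-dimensional homogeneity theorem for isoparametric submanifolds (which says that a connected, full, irreducible isoparametric submanifold of Euclidean space of codimension $\geq 3$ is a principal orbit of a polar representation) to the Hilbert PF setting, handling the codimension $2$ case by a separate argument. The overall idea is to reconstruct the acting group directly from the focal geometry of $M$ rather than to assume it a priori.

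First, I would invoke Terng's slice structure: at each $x\in M$ the affine normal space $\nu_xM$ is cut by the focal hyperplanes into a chamber system permuted by a Coxeter group $W_x$. Fullness and irreducibility of $M$ imply that $W_x$ is irreducible, and the hypothesis $\mathrm{codim}(M)\neq 1$ guarantees that $W_x$ has rank $\geq 2$, so the curvature normals span at least a two-dimensional configuration. Parallel translation in the normal bundle identifies the $W_x$ coherently as $x$ varies, producing a globally defined affine Weyl group acting on every normal slice.

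Second, I would construct a candidate group $H$ of isometries of $V$ as follows. For each pair of focal hyperplanes meeting at a sub-chamber of $\nu_xM$, the reflection across the corresponding focal submanifold extends (via the parallel foliation of $V$ by the family of parallel isoparametric leaves) to an isometry of $V$ that preserves the whole foliation; this is the PF analogue of the "normal holonomy reflections" used by Olmos and Thorbergsson. Let $H$ be the Hilbert--Lie group generated by these reflections. Using the Coxeter/Weyl structure above together with the fact that galleries in a Coxeter complex connect any two chambers, one deduces that $H$ already acts transitively on the chambers of each $\nu_xM$, and, combined with the transitive holonomy action along $M$, that $H$ acts transitively on $M$. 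A section for the $H$-action is given by $\nu_xM$: since $H\cdot y \cap \nu_xM$ is always a $W_x$-orbit, the action is polar with $M$ as a principal orbit.

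The main obstacle is verifying that $H$ really exists as a Hilbert/Banach--Lie group acting smoothly and properly Fredholm on $V$; in finite dimensions this is a free consequence of compactness and Olmos' normal holonomy theorem, but in Hilbert space one has to guarantee regularity of the generators and closure of $H$ under composition while preserving the polar/Fredholm structure. A natural route is to use that every homogeneous irreducible isoparametric PF submanifold arises (up to conjugation) via the gauge action of a loop group on a Hilbert Kac--Moody algebra — exactly the set-up of the previous theorem and of~\cite{Terng89}. Combining this with a Thorbergsson-style induction on the rank of $W_x$ reduces the general case to a finite list of local models, inside each of which the group $H$ can be written down explicitly, after which gluing through parallel transport yields the global polar action on $V$.
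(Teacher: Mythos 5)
This statement is not proved in the paper at all: it is quoted verbatim as a theorem of Heintze and Liu (the citation \cite{HeintzeLiu}), whose proof of homogeneity of infinite dimensional isoparametric submanifolds is a long and technical piece of work in its own right. So your proposal should be judged as an attempted reconstruction of that external proof, and as such it has a genuine gap.

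The gap is in your second and third paragraphs. The assertion that ``the reflection across the corresponding focal submanifold extends (via the parallel foliation\ldots) to an isometry of $V$ that preserves the whole foliation'' is not a construction one gets for free from the slice structure --- it is essentially the entire content of the theorem. In finite dimensions this extension is supplied by Olmos' normal holonomy theorem together with compactness of the holonomy group; neither ingredient is available for PF submanifolds of a Hilbert space, and Heintze--Liu's actual argument has to build the candidate isometries by a delicate limiting procedure along curvature distributions precisely because no such reflection group is known to exist a priori. Worse, your proposed repair of this obstacle is circular: you suggest using the fact that ``every homogeneous irreducible isoparametric PF submanifold arises via the gauge action of a loop group'' to identify the group $H$ inside explicit models. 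But homogeneity of $M$ is exactly the conclusion being proved (a principal orbit of a polar action is in particular a homogeneous submanifold), so you may not assume $M$ lies in the homogeneous/loop-group class at any stage of the argument. The ``finite list of local models'' and the rank induction likewise presuppose a classification of infinite dimensional isoparametric submanifolds that does not exist independently of this very theorem. If you want to engage with this result, the honest options are either to cite Heintze--Liu as the paper does, or to follow their actual proof, which constructs the transitive group of isometries directly from the focal data without ever passing through a classification of homogeneous examples.
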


The set $Q$ constructed in \cite{HeintzeLiu} has the structure of an affine algebraic building.
For more details cf.~\cite{Terng95}, \cite{HPTT}, \cite{Freyn10a}, and the references therein.
It is conjectured that all polar actions on Hilbert spaces correspond to $P(G,H)$ actions under suitable assumptions on the cohomogeneity. Very promising partial results in this direction due to Claudio Gorodski, Ernst Heintze and Kerstin Weinl exist. If this is the case one gets an equivalence between cities and isoparametric submanifolds of codimension $\not=1$ mirroring the situation described by Thorbergsson's theorem in the finite dimensional situation  --- cf.~\cite{Thorbergsson91}.

\subsection{The structure on the set of buildings in \boldmath$\mathfrak{B}$}

While the space of chambers and the simplicial realization allow a metric structure similar to the one of the subjacent Lie group, i.e.\ a Hilbert, Banach or Fréchet space structure, the situation is completely different for $\mathfrak{B}$ itself. The simple fact that the chambers belonging to a single building are dense in the space of all chambers shows that no refinement of a topology on the space of chambers will give a topology on the space of buildings. 

As we choose to define the twin city in terms of the geometric Kac-Moody groups, we will also describe the geometry and topology of those groups. We want two buildings in $\mathfrak{B}$ to be close iff there is a small group $\widehat{L}(G, \sigma)$, i.e.\ a group defined using strong regularity conditions, containing both of them. As the product of two functions of a given regularity is of the same regularity, we find that a distance defined in this way will be ultrametric. 

Hence we will show that a twin city carries a ultrametric pseudo distance.

For $\widehat{f}\in \widehat{L}(G, \sigma)$ let $f=\sum a_kz^k$ be the (matrix valued) associated Fourier series of the loop part. Recall that convergence conditions on the series $\sum |a_k|$ correspond to regularity conditions on $f$ --- cf.~\cite{GoodmanWallach84} for an extensive overview:

\begin{itemize}
 \item[-] $f$ is in $L^rG$ iff $\sum |a_k|k^r<\infty$.
 \item[-] $f$ is smooth iff $\sum |a_k|k^r<\infty$ for all $r\in \mathbb{N}$.
 \item[-] $f$ is holomorphic on $A_n$ iff $\sum |a_k|e^{kn}<\infty$.
 \item[-] $f$ is holomorphic on $\mathbb{C}^*$ iff $\sum |a_k|e^{kn}<\infty$ for all $n\in \mathbb{N}$.
\end{itemize}

At the moment of this writing, it is unclear if there is a suitable distance function which is meaningful in the whole range of regularity conditions. 

\paragraph{The \boldmath $\{A_n, \mathbb{C}^*\}$\protect\ndash setting}~\\\nopagebreak

To metrize the \glqq cloud\grqq\/ of buildings surrounding the tame twin city associated to a Kac-Moody symmetric space, we propose the following definition:

\begin{definition}
For $\Delta_0,\Delta_1 \in \mathfrak{B^\pm}$ and $x\in \Delta_0, y\in \Delta_1$ define 
$$\nu(x,y)= \textrm{max}_n \{\textrm{There is } f \in A_nG \textrm{ such that } f(x)= y\}$$ and $d(x,y)= e^{-\nu(x,y)}$. Then we put $d(\Delta_0, \Delta_1)=d(x,y)$. 
\end{definition}

This is equivalent to $$\nu(x,y)= \textrm{max}_n\{\textrm{There is a function } f \textrm{ such that } f(x)= y \textrm{ satisfying } \sum |a_k|e^{kn}<\infty\}.$$

\begin{lemma}[pseudo distance]
\label{pseudo distance}
$d$ is a ultrametric pseudo distance on the space of buildings in $\mathfrak{B}^{\epsilon}$.
\end{lemma}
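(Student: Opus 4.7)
The plan is to verify the four defining properties of an ultrametric pseudo distance in turn: well-definedness of $d(\Delta_0,\Delta_1)$ independently of the choice of chambers, non-negativity with $d(\Delta,\Delta)=0$, symmetry, and the ultrametric inequality. Throughout I will exploit two structural facts: the monotone inclusion $A_{m+1}G\subset A_mG$ (since holomorphicity on a larger annulus implies holomorphicity on a smaller one, and the Fourier estimate $\sum|a_k|e^{kn}<\infty$ is monotone in $n$), and that each $A_mG$ is a group under pointwise multiplication.

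The straightforward steps come first. Symmetry: if $f\in A_mG$ satisfies $f(x)=y$, then $f^{-1}\in A_mG$ and $f^{-1}(y)=x$, so $\nu(x,y)=\nu(y,x)$ and $d$ is symmetric. Reflexivity: taking $x=y$ and $f=e$, which lies in $A_mG$ for every $m\in\mathbb{N}$, yields $\nu(x,x)=\infty$ and hence $d(\Delta,\Delta)=0$. For the ultrametric inequality, suppose $f\in A_nG$ with $f(x)=y$ and $g\in A_mG$ with $g(y)=z$; after relabelling assume $n\le m$. Then by the inclusion $A_mG\subset A_nG$ we have $g\in A_nG$, so the product $gf\in A_nG$ and $gf(x)=z$. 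Taking the maximum over admissible $n$ and $m$ gives $\nu(x,z)\ge\min(\nu(x,y),\nu(y,z))$, and since $t\mapsto e^{-t}$ is order-reversing, $d(x,z)\le\max(d(x,y),d(y,z))$.

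The substantive step is well-definedness: if $x,x'\in\Delta_0$ and $y,y'\in\Delta_1$, then $\nu(x,y)=\nu(x',y')$. By Lemma~5.2 the quasi-algebraic stabilizer $H_i$ of $\Delta_i$ acts transitively on chambers in $\Delta_i$, and by Theorem~3.1 each $H_i$ is of the form $g_i\widehat{L_{\textrm{alg}}G}^\sigma g_i^{-1}$ for some $g_i\in\widehat{L}(G,\sigma)$. In the ambient setting relevant to the corollary (Kac-Moody groups of $\{A_n,\mathbb{C}^*\}$-regularity, in particular $\widehat{MG}^\sigma$), every element of $\widehat{L}(G,\sigma)$ lies in $A_mG$ for all $m$, and algebraic loops are finite Fourier sums, hence likewise in every $A_mG$. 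Therefore any element of $H_i$ lies in $A_mG$ for all $m$. Writing $x'=h_0x$, $y'=h_1y$ with $h_i\in H_i$, if $f\in A_mG$ connects $x$ to $y$ then $h_1fh_0^{-1}\in A_mG$ connects $x'$ to $y'$, and conversely; so the supremum $\nu$ is unchanged, and $d$ descends to a well-defined function on $\pi_0(\mathfrak{B}^\epsilon)\times\pi_0(\mathfrak{B}^\epsilon)$.

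The main obstacle I expect is precisely this well-definedness check, because it rests on confirming that quasi-algebraic stabilizers live inside every $A_mG$; one must be explicit about which $\widehat{L}(G,\sigma)$ is ambient so that the conjugating elements $g_i$ themselves have the requisite regularity. Finally, this is genuinely only a pseudo distance: $d(\Delta_0,\Delta_1)=0$ merely means that for each $m$ there exists \emph{some} $f_m\in A_mG$ sending $x$ to $y$, which does not produce a single function holomorphic on all of $\mathbb{C}^*$ (the family $\{f_m\}$ need not converge or stabilize). Thus the separation axiom may fail even though all four pseudo-distance axioms hold, completing the verification.
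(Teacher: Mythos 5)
Your proof is correct and follows essentially the same route as the paper's: symmetry and the strong triangle inequality come from the nesting $A_{m+1}G\subset A_mG$ together with closure of each $A_mG$ under products and inverses, and well-definedness on the space of buildings comes from transitivity of the quasi-algebraic group on each connected component. The only (cosmetic) difference is that you conjugate the connecting function directly by stabilizer elements, whereas the paper observes $d(x,x')=0$ for chambers in a common building and then applies the ultrametric inequality; your explicit check that quasi-algebraic elements lie in every $A_mG$ merely spells out a step the paper dismisses as clear.
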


\begin{proof}[Proof of lemma~\ref{pseudo distance}]~
\begin{enumerate}
 \item We prove that $d$ is a ultrametric pseudo distance on the space of chambers. To this end let $x, y, z\in \mathfrak{B}^{\epsilon}$ be chambers. We have to check:
	\begin{itemize}
	\item[-] symmetry: $f\in A^nG \Leftrightarrow f^{-1} \in A^nG$. Thus $\nu(x,y)=\nu(y,x)$ and $d(x,y)= d(y,x)$.
	\item[-] strong $\Delta$\ndash inequality: Let $d(x,y)= e^{-\nu(x,y)}$, $d(y,z)= e^{-\nu(y,z)}$. Thus there is a function $f_{xy}\in A^{\nu(x,y)}G$ such that $f(x)=y$ and a function $f_{yz}\in A^{\nu(y,z)}G$ such that $f(y)=z$. Without loss of generality suppose $\nu(x,y)\leq \nu(y,z)$. Thus $A^{\nu(x,y)}G\supset A^{\nu(y,z)}G$. So $f_{xz}=f_{xy} f_{yz}\in A^{\nu(x,y)}G$. Thus $d(x,z)= e^{-\nu(x,z)} \leq e^{-\nu(x,y)}=d(x,y)$. 
	\end{itemize}
 \item We have to check that the distance on the space of buildings is well defined. To this end let $x,x'\in \Delta_0$. There is a quasi-algebraic subgroup $G(\Delta_0)$ acting transitively on $\Delta_0$. Let $h\in G(\Delta_0)$ such that $x'=h(x)$. Clearly $d(x,x')=0$. The result follows now from the triangle inequality. \qedhere

\end{enumerate}
\end{proof}

\paragraph{\bf\boldmath The Hilbert space setting: $H^1$-loops acting on  $H^0$-spaces}

In many papers describing the geometry of Kac-Moody groups (see \cite{HPTT}, \cite{Terng89}, and \cite{Terng95}), the setting of $H^1$\ndash loops with values in a compact simple Lie group $G$, acting on the space of $H^0$\ndash loops in $\mathfrak{g}$, is used. Our results carry over to this setting: 

Nevertheless, describing $\mathfrak{B}^{\pm}= (LG\times \Delta)/\sim$, it seems meaningful to make some changes in the definition of the pseudo distance. As we defined it, the distance between two buildings depends on the convergence radius of the functions transforming one building into the other. For $H^1$\ndash functions this definition is useless: The space of buildings such that the distance is $0$ is just to big. So it seems meaningful to introduce another distance function:

\begin{definition}[$H^1$\ndash distance]
Let $\Delta_1,\Delta_2 \in \mathfrak{B}^{\epsilon}$. Let $fB^{\epsilon}\in \Delta_1$, $gB^{\epsilon}\in \Delta_2$ and let $fg^{-1}=\sum a_k e^{ikt}$ be the Fourier series expansion.   $\nu_r(fB^{\epsilon},gB^{\epsilon})=\textrm{max}_r \{ \sum k^r a_k< \infty\}$ and
$d_r(\Delta_1,\Delta_2)= e^{-\nu(fB^{\epsilon}, gB^{\epsilon})}$.
\end{definition}

\begin{lemma}
The $H^1$\ndash distance is a ultrametric pseudo distance.
\end{lemma}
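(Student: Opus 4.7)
The plan is to mirror the structure of the earlier pseudo-distance lemma, checking in turn well-definedness on the set of buildings, symmetry, and the strong triangle inequality. The essential analytic input is that the algebra of loops satisfying $\sum k^r|a_k| < \infty$ is closed under pointwise multiplication and (for $r$ large enough that it is a Wiener-type Banach algebra containing $G$) under the inversion map of the loop group; this is precisely the Sobolev regularity scale that organizes the $H^1$-setting used by Terng.

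First I would establish that $\nu_r$, and hence $d_r$, is well defined on the set of buildings, i.e.\ independent of the chosen chamber representatives. If $fB^\epsilon$ and $f'B^\epsilon$ lie in the same building $\Delta_1$, then by Corollary~\ref{cor:implication_for_building} there exists an element $p$ of a quasi-algebraic subgroup acting transitively on $\Delta_1$ such that $f' = pf$. Since any such $p$ has finite Fourier expansion (up to the central $\mathbb{C}^\ast$-factor, which does not affect the Fourier decay of the loop part), convolving its Laurent polynomial Fourier coefficients against those of $fg^{-1}$ preserves every regularity class $\sum k^r|a_k|<\infty$. Hence $\nu_r(f'B^\epsilon,gB^\epsilon) = \nu_r(fB^\epsilon,gB^\epsilon)$, and the same argument on the right shows independence of the representative of $\Delta_2$. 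The same observation immediately yields the \emph{pseudo} property: two chambers of a common building are related by a polynomial loop, so $\nu_r = \infty$ and $d_r(\Delta,\Delta)=0$.

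For symmetry, observe that $(fg^{-1})^{-1} = gf^{-1}$; since the Banach algebra of $H^r$-regular loops is closed under inversion in the loop group, the Fourier decay of $gf^{-1}$ matches that of $fg^{-1}$, so $\nu_r(\Delta_1,\Delta_2)=\nu_r(\Delta_2,\Delta_1)$ and thus $d_r$ is symmetric. For the strong $\Delta$-inequality, pick representatives $f,g,h$ of $\Delta_1,\Delta_2,\Delta_3$ and write
\[
fh^{-1} = (fg^{-1})(gh^{-1}).
\]
By the Banach-algebra property, if $fg^{-1}$ lies in the $H^{r_1}$-class and $gh^{-1}$ in the $H^{r_2}$-class, then their product lies in the $H^{\min(r_1,r_2)}$-class. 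Setting $r_i = \nu_r(\Delta_1,\Delta_2)$ and $\nu_r(\Delta_2,\Delta_3)$ yields $\nu_r(\Delta_1,\Delta_3) \geq \min(r_1,r_2)$, hence
\[
d_r(\Delta_1,\Delta_3)\leq \max\bigl(d_r(\Delta_1,\Delta_2),\,d_r(\Delta_2,\Delta_3)\bigr).
\]

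The main obstacle, as in the earlier pseudo-distance lemma, is verifying the multiplicative behaviour of the regularity scale: one needs that $\sum k^r|a_k|<\infty$ is preserved under convolution of matrix-valued Fourier series and under inversion inside $LG$. In the range of $r$ where the loop space is a Banach algebra this is standard, but for lower $r$ one may need to replace the Wiener-type estimate by a Leibniz/Sobolev multiplication argument; the formulation adopted here (via the exponent $r$ of the weight $k^r$) is precisely chosen so that the algebra property holds uniformly, and that is the only nontrivial ingredient beyond the purely combinatorial Bruhat-style computations already used in the proof of Lemma~\ref{pseudo distance}.
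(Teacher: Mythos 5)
Your proof is correct and follows essentially the same route as the paper, which simply states that the argument mirrors the earlier pseudo-distance lemma: symmetry from closure of the regularity class under inversion, the strong ultrametric inequality from the (weighted Wiener/Sobolev) algebra property of the classes $\sum k^r|a_k|<\infty$, and well-definedness on buildings from transitivity of the associated quasi-algebraic subgroup. Your added care about when the multiplication and inversion estimates actually hold is a useful fleshing-out, but not a different method.
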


\begin{proof}
The proof follows the pattern of the proof for lemma~\ref{pseudo distance}.
\end{proof}

\section{The spherical building at infinity}
\label{section:the_spherical_building_at_infinity}

An affine building $\mathfrak{B}_{\textrm{aff}}$ being a CAT$(0)$\ndash space, it has a natural boundary at infinity. This boundary has the structure of a spherical building $\mathfrak{B}^{\infty}$. Let $W_{\textrm{aff}}$ be the affine Weyl group of $\mathfrak{B}_{\textrm{aff}}$. Call a vertex $v$ special iff every reflection\ndash hyperplane is parallel to a hyperplane passing through $v$ and let $W$ denote the stabilizer of a special vertex. Then the affine Weyl group $W_{\textrm{aff}}$ can be described as a semidirect product of $W$ with an Abelian group $\mathbb{Z}^{k}$. $W$ is the Weyl group of $\mathfrak{B}^{\infty}$. Every affine apartment in $\mathfrak{B}_{\textrm{aff}}$ has a spherical apartment in $\mathfrak{B}^{\infty}$ as its boundary. The chambers of the spherical building $\mathfrak{B}^{\infty}$ correspond to equivalence classes of sectors in the affine building --- cf.~\cite{AbramenkoBrown08} and \cite{Ronan03}.

\subsection{Apartment systems in affine buildings}

Similar to the various possible completions of Kac-Moody groups, an affine building does not have a unique apartment system but admits a variety of different apartment systems with sundry properties. Let $\mathcal{A}$ be one of them. We call cells in $\mathfrak{B}^{\infty}$ inner cells with respect to $\mathcal{A}$ if they correspond to equivalence classes of sectors in apartments $A\in \mathcal{A}$. The set of inner cells is a simplicial complex which will be denoted $\mathfrak{B}^{\infty}_{\mathcal{A}}$. It is clearly a subcomplex of $\mathfrak{B}^{\infty}$.
In general this complex is not a building: While every apartment is a spherical Coxeter complex and for each pair of apartments $A_1$ and $A_2$ there is a chamber complex isomorphism $\phi:A_1\longrightarrow A_2$ fixing $A_1\cap A_2$ (hence all apartments are of the same type), it is not clear that each pair of chambers has to be in a joint apartment. Call those complexes prebuildings as they can be made into buildings by the definition of additional apartments --- compare the notion of hovel introduced in \cite{GaussentRousseau08}, which describes a similar situation in another setting. We quote theorem 11.89 of \cite{AbramenkoBrown08}:

\begin{theorem}
$\mathfrak{B}^{\infty}$ is a building with $\mathcal{A}^{\infty}$ as system of apartments iff $\mathcal{A}$ has the following property: For any two $\mathcal{A}$\ndash sectors, there is an apartment in $\mathcal{A}$ containing a subsector of each of them. 
\end{theorem}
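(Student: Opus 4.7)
The plan is to verify both directions directly from the definitions, where chambers of $\mathfrak{B}^{\infty}$ are equivalence classes of sectors (two sectors being equivalent iff they share a common subsector), and $\mathcal{A}^{\infty}$ consists of the sets of sector classes represented by sectors of affine apartments $A \in \mathcal{A}$. Each such $A^{\infty}$ is already known to be a spherical Coxeter complex of type $(W,S)$, so the only substantive thing to check is the two building axioms for $(\mathfrak{B}^{\infty},\mathcal{A}^{\infty})$: that any two chambers lie in a common apartment, and that for any two apartments containing a common simplex there is an isomorphism fixing the intersection pointwise.

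For the direction $(\Rightarrow)$, assume $\mathfrak{B}^{\infty}$ is a building with apartment system $\mathcal{A}^{\infty}$. Let $S_1,S_2$ be two $\mathcal{A}$\ndash sectors and let $c_1,c_2\in \mathfrak{B}^{\infty}$ be the chambers they represent. By the building axiom, $c_1,c_2$ lie in some $A^{\infty}\in\mathcal{A}^{\infty}$, which is by definition the set of sector classes of some $A\in\mathcal{A}$. So there exist sectors $T_1,T_2\subset A$ with $T_i\sim S_i$, i.e.\ $T_i$ and $S_i$ share a common subsector $S'_i\subset A$. Then $S'_i\subset S_i$ is an $\mathcal{A}$\ndash sector contained in $A$, giving the required subsector property.

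For the converse $(\Leftarrow)$, assume the subsector property and verify the two axioms. Given two chambers $c_1,c_2\in \mathfrak{B}^{\infty}$, pick representative sectors $S_1,S_2$. By hypothesis there is $A\in\mathcal{A}$ containing subsectors $S'_i\subset S_i$; since $S_i\sim S'_i$, the chambers $c_i$ are represented by $S'_i$ and therefore lie in $A^{\infty}$, proving the first axiom. For the isomorphism axiom, let $A_1^{\infty},A_2^{\infty}\in\mathcal{A}^{\infty}$ both contain a simplex $\xi$; pick $A_1,A_2\in\mathcal{A}$ with $A_i^{\infty}=$ boundary of $A_i$. A standard argument in affine buildings (cf.~\cite{AbramenkoBrown08}, chapter~11) produces, after possibly translating to subsectors using the hypothesis, an affine chamber complex isomorphism $\phi:A_1\to A_2$ fixing $A_1\cap A_2$, whose boundary induces the desired isomorphism $A_1^{\infty}\to A_2^{\infty}$ fixing all of $A_1^{\infty}\cap A_2^{\infty}$, in particular fixing $\xi$.

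The main obstacle will be the isomorphism axiom: the naive map between affine apartments need not fix their full intersection on the nose, so one has to use the subsector hypothesis to first pass to a common subsector situation and then extend the partial isomorphism obtained from the Coxeter geometry of the affine Weyl group $W_{\textrm{aff}}=W\ltimes\mathbb{Z}^{k}$ to an isomorphism of the full apartments. Everything else (incidence, thinness of apartments, type preservation) is essentially transported from the affine side via the already\ndash established bijection between classes of parallel sectors and chambers at infinity, so no new difficulty arises there.
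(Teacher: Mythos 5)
The paper itself offers no proof of this statement: it is quoted as Theorem~11.89 of \cite{AbramenkoBrown08} and the ``proof'' is the citation. So there is nothing in the paper to compare you against line by line; judged on its own terms, your outline gets the easy parts right. The forward direction is complete: two chambers at infinity lying in a common $A^{\infty}$ give sectors of $A$ parallel to the prescribed ones, and parallel sectors share a common subsector, which then lies in $A$. Likewise the first building axiom in the converse is fine, and the reduction from arbitrary simplices to chambers is harmless there because every simplex at infinity is a face of a chamber at infinity.

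The gap is in the isomorphism axiom, which you rightly call the main obstacle and then discharge by appeal to ``a standard argument in affine buildings (cf.~\cite{AbramenkoBrown08}, chapter~11)'' --- but the theorem you are proving \emph{is} that chapter's Theorem~11.89, so this is essentially circular. Concretely, what is missing: given $A_1,A_2\in\mathcal{A}$ whose boundaries share a chamber, the affine axiom (B2) does hand you $\phi\colon A_1\to A_2$ fixing $A_1\cap A_2$ pointwise, and for a \emph{chamber} of $A_1^{\infty}\cap A_2^{\infty}$ your ``pass to a common subsector'' works, since two parallel sectors always contain a common subsector, which then sits inside $A_1\cap A_2$ and is fixed by $\phi$. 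But $A_1^{\infty}\cap A_2^{\infty}$ also contains lower-dimensional simplices, represented by sector \emph{faces}, and two parallel sector faces need not contain a common subface at all (two parallel rays in a Euclidean plane can be disjoint). So the transport argument does not literally apply to the non-chamber simplices of the intersection, and one genuinely needs the supporting lemmas on intersections of apartments containing a common sector (showing every simplex of $A_1^{\infty}\cap A_2^{\infty}$ has a representative face inside $A_1\cap A_2$), or an explicit reduction of the axiom to the chamber case. Until one of these is supplied, the converse direction is not complete.
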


\begin{proof}
cf.~\cite{AbramenkoBrown08}. 
\end{proof}

Hence $\mathfrak{B}^{\infty}_{\mathcal{A}}$ is a building only for comparatively few distinguished apartment systems. As our buildings and cities carry actions of a Kac-Moody group $\widehat{L}(G, \sigma)$ our principal interest is in apartment systems described via a group action.
Let $A$ be an apartment, $H$ a group acting on $\mathfrak{B}$ and $\mathcal{A}=\{hA|h\in H\}$. Let $C\subset A$ be a chamber and $\mathfrak{C}\subset A$ be a sector, let $B_C$ be the stabilizer of $C$ and $B_{\mathfrak{C}}$ be the stabilizer of $\mathfrak{C}_{\infty}$.

\begin{theorem}
 $\mathcal{A}^{\infty}$ is a building iff $G=B_{\mathfrak{C}}NB_{\mathfrak{C}}$, hence if $(B_{\mathfrak{C}}, N)$ is a $BN$\ndash pair in $G$. 
\end{theorem}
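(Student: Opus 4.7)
The plan is to reduce the statement to the combinatorial criterion of the theorem of Abramenko--Brown quoted above, and then translate that criterion into the double-coset decomposition $G=B_{\mathfrak{C}}NB_{\mathfrak{C}}$.

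First I would parametrise apartments and sectors in $\mathcal{A}$. By definition every apartment in $\mathcal{A}$ is of the form $gA$ with $g\in G$. Since $N$ acts on $A$ with image the affine Weyl group, and this action surjects onto the spherical Weyl group acting transitively on chambers of $A_{\infty}$, every sector of $A$ is of the form $n\mathfrak{C}$ with $n\in N$. Consequently every $\mathcal{A}$-sector may be written as $g\mathfrak{C}$ for some $g\in G$.

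The heart of the proof is the following translation, which I would establish as a lemma.

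\emph{Claim.} An apartment $kA\in\mathcal{A}$ contains a subsector of $\mathfrak{C}$ if and only if $k\in B_{\mathfrak{C}}N$.

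For this, I would use the standard fact that two sectors in an affine building with the same class at infinity always admit a common subsector. It follows that an apartment contains a subsector of $\mathfrak{C}$ iff its boundary contains the chamber $\mathfrak{C}_{\infty}$. Hence $kA$ contains a subsector of $\mathfrak{C}$ iff $\mathfrak{C}_{\infty}\in(kA)_{\infty}=k\cdot A_{\infty}$, iff $k^{-1}\mathfrak{C}_{\infty}$ is a chamber of $A_{\infty}$, iff there is $n\in N$ with $k^{-1}\mathfrak{C}_{\infty}=n\mathfrak{C}_{\infty}$, iff $kn\in B_{\mathfrak{C}}$, iff $k\in B_{\mathfrak{C}}N$.

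Granted the claim, the common-subsector criterion for the pair $g_1\mathfrak{C}$, $g_2\mathfrak{C}$ becomes the existence of $g_0\in G$ with $g_1^{-1}g_0\in B_{\mathfrak{C}}N$ and $g_2^{-1}g_0\in B_{\mathfrak{C}}N$. Setting $h=g_1^{-1}g_2$ and replacing $g_0$ by $g_1^{-1}g_0$, this is equivalent to $(B_{\mathfrak{C}}N)\cap h(B_{\mathfrak{C}}N)\neq\emptyset$, that is,
\[
h\in (B_{\mathfrak{C}}N)(B_{\mathfrak{C}}N)^{-1}=B_{\mathfrak{C}}NN^{-1}B_{\mathfrak{C}}=B_{\mathfrak{C}}NB_{\mathfrak{C}},
\]
where we used that $N$ and $B_{\mathfrak{C}}$ are subgroups. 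Requiring this for \emph{all} pairs of $\mathcal{A}$-sectors is precisely $G=B_{\mathfrak{C}}NB_{\mathfrak{C}}$, which by the preceding theorem is equivalent to $\mathcal{A}^{\infty}$ being a building. The parenthetical remark follows because $G=BNB$ is one of the defining axioms of a $BN$-pair.

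The main obstacle is the Claim above: one must carefully justify the passage from ``$kA$ contains a subsector of $\mathfrak{C}$'' to the statement about $A_{\infty}$ (which relies on the common-subsector property of parallel sectors in an affine building) and identify $B_{\mathfrak{C}}$ with the pointwise stabiliser of $\mathfrak{C}_{\infty}$. Once this dictionary between apartments of $\mathcal{A}$ meeting $\mathfrak{C}$ in a subsector and the coset $B_{\mathfrak{C}}N$ is established, the rest is a short manipulation of double cosets.
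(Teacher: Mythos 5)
Your argument is correct, and it is essentially the standard proof. The paper itself offers no argument here --- it simply cites Theorem~11.100 of Abramenko--Brown --- so your proposal supplies what the paper leaves to the reference: a reduction to the previously quoted criterion (``for any two $\mathcal{A}$\ndash sectors there is an apartment in $\mathcal{A}$ containing a subsector of each''), followed by the translation of that criterion into the double-coset identity $G=B_{\mathfrak{C}}NB_{\mathfrak{C}}$. Your key Claim (an apartment $kA$ contains a subsector of $\mathfrak{C}$ iff $k\in B_{\mathfrak{C}}N$) is exactly the right dictionary, and its proof via the common-subsector property of parallel sectors and the transitivity of $N$ on the chambers of $A_{\infty}$ is sound; the ensuing coset manipulation $(B_{\mathfrak{C}}N)(B_{\mathfrak{C}}N)^{-1}=B_{\mathfrak{C}}NB_{\mathfrak{C}}$ is clean and complete.

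Two small points of precision. First, it is not literally true that every sector of $A$ has the form $n\mathfrak{C}$ with $n\in N$ (sectors may be based at arbitrary points of $A$); what is true, and what your argument actually uses, is that every sector of $A$ is parallel to some $n\mathfrak{C}$, i.e.\ the chambers of $A_{\infty}$ are exactly the classes $n\mathfrak{C}_{\infty}$. Since the common-subsector criterion depends only on classes at infinity, this suffices, but the statement should be phrased up to parallelism. Second, $G=BNB$ is a consequence (the Bruhat decomposition) of the $BN$\ndash pair axioms rather than one of the axioms themselves; the implication you need for the parenthetical remark still holds, but the justification should be ``Bruhat decomposition'' rather than ``defining axiom.'' Neither point affects the validity of the proof.
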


\begin{proof}
cf.~\cite{AbramenkoBrown08}, theorem~11.100.
\end{proof}

In an affine building there is a minimal apartment system, which corresponds to the apartments described by the action of the group of algebraic loops (i.e.\ the group $G(k[t,t^{-1}])$); the corresponding building at infinity is the one associated to the fraction field $k(t)$. 

There are many more apartment systems, many of which correspond to various completions of this group. In contrast, a spherical building --- and hence the building at infinity --- has a unique well-defined apartment system (see corollary~\ref{cor:apartmentsystem_unique}). Hence there is a variety of different buildings at infinity, partially ordered by inclusion, that are all subbuildings of a maximal building, namely $\mathfrak{B}^{\infty}$.

\subsection{Candidates for the spherical building at infinity}

Spherical buildings of rank at least $3$ roughly correspond to algebraic groups over fields --- cf.~\cite{AbramenkoBrown08}, chapter~9. A spherical building at infinity corresponds therefore to a spherical building over some function field.  In the holomorphic setting of $\widehat{MG}$, there are three main candidates for this function field:
\begin{itemize}
   \item[-] The field of rational functions on $\widehat{\mathbb{C}}$, $\mathbb{C}(z)= \mathbb{C}\left(\frac{1}{z}\right)$.
   \item[-] The formal completion of the field of rational functions $\mathbb{C}((z))$ (resp.\ $\mathbb{C}\left(\left(\frac{1}{z}\right)\right)$).
   \item[-] The fields of meromorphic functions $\mathcal{M}(\mathbb{C}^*)$.
\end{itemize}

As $\mathbb{C}(z)= \mathbb{C}\left(\frac{1}{z}\right)$, the involution $z\mapsto \frac{1}{z}$ fixes $\mathbb{C}(z)$. Hence in this situation the spherical building associated to $\mathbb{C}(z)$ can be interpreted as the spherical building at infinity for affine buildings in $\mathfrak{B}^+$ and $\mathfrak{B}^-$.  

This is no longer true for $\mathbb{C}((z))$ and $\mathbb{C}\left(\left(\frac{1}{z}\right)\right)$ as these fields are completions of $\mathbb{C}(z)$ with respect to different norms.
Hence in this case we have to clearly distinguish between the positive and the negative buildings.

The spherical buildings associated to the fields $\mathbb{C}((z))$ (resp.\ $\mathbb{C}\left(\left(\frac{1}{z}\right)\right)$) correspond to the CAT$(0)$\ndash boundary $\mathfrak{B}^{\infty}$ of a building in $\mathfrak{B}^+$ (resp.\ $\mathfrak{B}^-$). The map $z\mapsto \frac{1}{z}$ swaps the two fields. Hence in this case one has to distinguish between positive and negative buildings.

The field of meromorphic functions, the fraction field of the field of holomorphic functions, is a special completion of $\mathbb{C}(z)$ in both directions. We have valuations for each interior point $z\in \mathbb{C}^*$. The affine buildings we are interested in correspond to $0$ and $\infty$, the only two points where no valuation is available, as functions in $\mathcal{M}(\mathbb{C}^*)$ can have essential singularities in these points. Hence the construction of a valuation  --- a necessity for the spherical building to be the boundary of an affine building --- fails. On the other hand, we get valuations on subrings. This is the correct situation for the building to be the boundary of a (twin) city. Let us remark, that one can use this observation as a starting point for a more abstract approach to twin cities.

\subsection{Spherical buildings for twin buildings}

Take an affine twin building $\Delta$. Both affine buildings $\Delta^+$ and $\Delta^-$ each have a spherical building at infinity. The twinning of the affine twin building induces a twinning of the spherical buildings at infinity. The easiest way to see this is to use the description of a twinning via twin apartments.  In the next theorem we call cells in the spherical buildings at infinity twin cells if they are interior cells with respect to a system of twin apartments. They are $\mathcal{A}$\ndash cells  for the apartment system described by the quasi-algebraic subgroup corresponding to the twin building. We quote the following theorem of Marc Ronan~\cite{Ronan03}:

\begin{theorem}
Let $\Delta=\Delta^+\cup \Delta^-$ denote an affine twin building and let $\Delta^{\infty,\pm}_{\mathcal{A}}\subset \Delta^{\infty,\pm}$ denote the subcomplex comprising all twin cells.  Then $\Delta^{\infty,+}_{\mathcal{A}}$ and $\Delta^{\infty,-}_{\mathcal{A}}$ are spherical subbuildings of $(\Delta_\pm)^{\infty}$, and the twinning of $\Delta^+_{\textrm{aff}}$ and $\Delta^-_{\textrm{aff}}$ defines a canonical isomorphism between $\Delta^{\infty,+}_{\mathcal{A}}$ and $\Delta^{\infty,-}_{\mathcal{A}}$. The complexes $\Delta^{\infty,\pm}_{\mathcal{A}}$  are spherical buildings over the field $\mathbb{C}(z)$.
\end{theorem}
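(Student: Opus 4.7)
The plan is to work throughout with the quasi-algebraic subgroup $G(\Delta) := G(B^+, B^-) \subset \widehat{L}(G, \sigma)$ that acts chamber-transitively on $\Delta = \Delta^+ \cup \Delta^-$. This group carries a full affine twin $BN$-pair $(B^+, B^-, N, W_{\textrm{aff}}, S_{\textrm{aff}})$, and $\mathcal{A}$ is the apartment system consisting of $G(\Delta)$-translates of a standard twin apartment. I would check the three assertions of the theorem in turn.

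\textbf{Subbuilding.} First, fix a sector $\mathfrak{C}^{\pm} \subset A^{\pm}$ based at the fundamental chamber and let $B_{\mathfrak{C}^{\pm}} \subset G(\Delta)$ denote its germ-wise stabilizer. Concretely $B_{\mathfrak{C}^{\pm}}$ is generated by the torus together with all affine root subgroups whose gradient direction points into $\mathfrak{C}^{\pm}$. Using the affine Bruhat decomposition of $G(\Delta)$ together with the semidirect decomposition $W_{\textrm{aff}} = W \ltimes \mathbb{Z}^k$, where $W$ is the spherical stabilizer of a special vertex, one regroups double cosets to obtain
$$G(\Delta) \;=\; \coprod_{w \in W} B_{\mathfrak{C}^{\pm}} \,\dot w\, B_{\mathfrak{C}^{\pm}}\,.$$
The remaining $BN$-pair axioms for $(B_{\mathfrak{C}^{\pm}}, N)$ are routine, so the criterion quoted just before this theorem (theorem~11.100 of~\cite{AbramenkoBrown08}) identifies $\Delta^{\infty,\pm}_{\mathcal{A}}$ as a spherical building of type $W$, naturally contained as a subcomplex of $(\Delta^{\pm})^{\infty}$.

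\textbf{Canonical isomorphism.} For each twin apartment $A = A^+ \cup A^-$, the codistance $\delta^*$ produces an antipodal pairing between the sector germs in $\partial A^+$ and those in $\partial A^-$: to a sector germ in $A^+$ one associates the unique sector germ in $A^-$ whose direction is opposite, determined cell-by-cell by the pointwise opposition relation $\delta^* = 1$. This gives a simplicial isomorphism $\partial A^+ \to \partial A^-$. To see that these local identifications glue to a global isomorphism $\Delta^{\infty,+}_{\mathcal{A}} \to \Delta^{\infty,-}_{\mathcal{A}}$ I would invoke that a twin apartment is uniquely determined by a pair of opposite sector germs (the affine-twin analogue of theorem~\ref{apartmentsconvexhulls}, proved in \cite{AbramenkoRonan98}), so the pairwise maps automatically agree on overlaps; $G(\Delta)$-equivariance then follows from the invariance of $\delta^*$ under the diagonal action.

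\textbf{Field of definition.} Via the isomorphism $G(\Delta) \cong G(\mathbb{C}[z,z^{-1}])$ from section~\ref{section:regularity}, the two discrete valuations $v_0$ (order at $z = 0$) and $v_{\infty}$ (order at $z = \infty$) on $\mathbb{C}(z)$ exhibit $\Delta^+$ (resp.\ $\Delta^-$) as the Bruhat-Tits building of $G(\mathbb{C}(z))$ with respect to $v_0$ (resp.\ $v_{\infty}$). Under this identification, $B_{\mathfrak{C}^{\pm}} \cap G(\mathbb{C}(z))$ is a Borel subgroup of the reductive group $G_{\mathbb{C}(z)}$, and the sector germs in $\mathcal{A}$ correspond bijectively to Borel subgroups over $\mathbb{C}(z)$; this realizes each $\Delta^{\infty,\pm}_{\mathcal{A}}$ as the spherical building of $G(\mathbb{C}(z))$. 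The main obstacle is precisely this last step: matching the combinatorial sector-germ stabilizer $B_{\mathfrak{C}^{\pm}}$ with an honest Borel over $\mathbb{C}(z)$ requires the Bruhat-Tits dictionary between parahoric subgroups and the residue spherical geometry, and in the Kac-Moody setting has to be verified by checking simple root subgroups and the spherical Weyl-group action. Once this identification is in place, parts (1) and (2) become essentially bookkeeping, and the canonical isomorphism of (2) is also the isomorphism of $G(\mathbb{C}(z))$-buildings swapping the chambers fixed by the two opposite Borel classes of $G_{\mathbb{C}(z)}$, as expected.
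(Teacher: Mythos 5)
First, a point of reference: the paper does not actually prove this statement --- its proof is the single line ``This is the main theorem in \cite{Ronan03}'' --- so there is no argument in the paper to compare yours against, and your sketch has to stand on its own. Your second and third paragraphs are sound in outline and do reflect how the result is actually obtained: the canonical isomorphism is induced by opposition of sector germs inside twin apartments, and the identification of the twin cells with the parabolic subgroups of $G(\mathbb{C}(z))$ is the correct mechanism for both the subbuilding property and the field of definition.

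The genuine gap is in your first paragraph. The decomposition $G(\Delta)=\coprod_{w\in W}B_{\mathfrak{C}}\,\dot w\,B_{\mathfrak{C}}$ is false, and no regrouping of affine double cosets will produce it. Take $G(\Delta)=SL_2(\mathbb{C}[z,z^{-1}])$ and let $\mathfrak{C}$ point towards the end $[1:0]$; then $B_{\mathfrak{C}}$ as you define it is the group of upper triangular matrices over $\mathbb{C}[z,z^{-1}]$, whose diagonal entries are necessarily units $\lambda z^{n}$. Consequently every element of $B_{\mathfrak{C}}\dot wB_{\mathfrak{C}}$ has lower-left entry a unit, and every element of $B_{\mathfrak{C}}$ has lower-left entry $0$, so $\left(\begin{smallmatrix}1&0\\1+z&1\end{smallmatrix}\right)$ lies in no cell of your decomposition: its spherical Bruhat factors exist only over $\mathbb{C}(z)$, since they involve $(1+z)^{-1}$. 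This is not a repairable slip, because the criterion you invoke (theorem~11.100 of \cite{AbramenkoBrown08}) genuinely fails here: the system $\mathcal{A}^{\infty}$ of sector germs of $G(\Delta)$-translates of the standard apartment is \emph{not} an apartment system for $\Delta^{\infty,\pm}_{\mathcal{A}}$ --- already for $SL_2$ the twin ends $[1:0]$ and $[1:1+z]$ lie on no common $\mathcal{A}$-line. The theorem only asserts that the subcomplex of twin cells is a building; by corollary~\ref{cor:apartmentsystem_unique} its apartment system is then necessarily the complete one of the spherical building of $G(\mathbb{C}(z))$, which strictly contains $\mathcal{A}^{\infty}$. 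So the subbuilding property cannot be extracted from a $BN$-pair inside $G(\Delta)$ itself; it has to come from the spherical $BN$-pair of $G(\mathbb{C}(z))$, i.e.\ from carrying out in full the identification that you correctly flag as the main obstacle in your third paragraph (together with the verification that $G(\mathbb{C}(z))=G(\mathbb{C}[z,z^{-1}])\cdot B(\mathbb{C}(z))$, so that every $\mathbb{C}(z)$-parabolic really does arise as a twin cell).
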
 

\begin{proof}
 This is the main theorem in \cite{Ronan03}.
\end{proof}

Hence twin apartments induce not only a twinning of the affine buildings but also of the spherical buildings at infinity.

Generalizing this idea to a twin city, each positive spherical building is twinned with each negative one along a series of inner apartments with respect to the apartment system corresponding to the quasi-algebraic subgroup $\widehat{G}(B^+, B^-)$ described by the twin building. This gives us a collection of spherical buildings at infinity which are in bijection with the individual twin buildings in $\Delta$.

\subsection{Embedding of the building at infinity}

Let $\mathfrak{B}$ be a building in the $X$\ndash category. Buildings in $\mathfrak{B}^{\pm}$ piece together to give an $X$\ndash vector space. Their spherical buildings at infinity piece together to form the sphere at infinity of this vector space. We can identify this sphere with a small sphere around a special vertex.  Chambers in the building at infinity correspond to equivalence classes of sectors. Using this identification they can be identified with the sectors based at a single chosen special vertex.

We verified that each affine building in $\mathfrak{B}^+$ is twinned with all buildings in $\mathfrak{B}^-$. Identify the cells of  $\mathfrak{B}^+$ as usual with $(G, \sigma)/B^+$ and take as system of apartments $\mathcal{A}_{L(G, \sigma)}$ all apartments corresponding to translates by elements of $L(G, \sigma)$ of a standard apartment.

We study now the structure at infinity described by this apartment system.

We investigate now how the $2$\ndash parameter family of immersion of $\mathfrak{B}$ behaves with respect to the various apartment systems. The crucial point is that the embedding \glqq looses\grqq\/ many apartments. Because of the identification $\widehat{L}(G_{\mathbb{C}}, \sigma)/B=L(G,\sigma)/T$ we just see a very limited number of apartments, namely each cell is contained in exactly one apartment. This means that we see in the embedding only one distinguished element in each equivalence class of sectors. Hence for each cell in $\mathfrak{B}^{\infty}_{\mathcal{A}}$ we are left with exactly one apartment containing it.

Let $\widehat{T}=T\oplus \mathbb{R}c\oplus \mathbb{R}d$ be a maximal torus and study the restriction of the 2\ndash parameter family $\varphi_{l,r}|_{\varphi^{-1}(\widehat{T})}$ of the embeddings of the twin city to $\widehat{T}$. Take a special vertex $v_0$ (that is, a vertex which contains an element in every parallel class). This vertex corresponds to a $2$\ndash subspace described by $l$ and $r$ passing through the origin. The sectors centered at $v_0$ are $2$\ndash parameter families of sectors in $\widehat{T}\backslash \widetilde{T}$. Take the closure of a sector in $\widetilde{T}$. This is again a $2$\ndash parameter family in $\widetilde{T}$. This yields the following theorem:

\begin{theorem}[Embedding of the spherical pre-building at infinity]
The cone over $\mathfrak{B}^{\infty}$ embeds equivariantly and surjectively in the loop algebra $L(\mathfrak{g},\sigma)$. Equivalently: there is a $2$\ndash parameter family of embeddings of the spherical buildings at infinity in $\widetilde{L}(G, \sigma)$. 
\end{theorem}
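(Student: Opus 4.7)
The plan is to bootstrap from the already-established $2$\ndash parameter family $\varphi_{l,r}$ of embeddings of the twin city into $\widehat{L}(\mathfrak{g},\sigma)$ from Theorem~\ref{Embedding of the twin city}, and to pass to the cone structure on $\mathfrak{B}^\infty$ by letting the fundamental alcove $\Delta$ degenerate to a sector.

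First I would fix a maximal torus of finite type $\widehat{T}=T\oplus \mathbb{R}c \oplus \mathbb{R}d$ and its standard apartment $\mathcal{A}\subset \mathfrak{B}$. By the construction preceding Theorem~\ref{Embedding of the twin city}, the restriction $\varphi_{l,r}|_{\mathcal{A}}$ lands in the intersection $\widehat{T}\cap H_{l,r}$, which, after projection modulo the $\mathbb{R}c$ coordinate fixed by $l$ and the $\mathbb{R}d$ coordinate fixed by $r$, is an affine Euclidean space modeled on $T$. Choose a special vertex $v_0\in \mathcal{A}$. Since $v_0$ is special, every reflection hyperplane through $\mathcal{A}$ is parallel to one through $v_0$, so the sector decomposition of $\mathcal{A}$ based at $v_0$ coincides (after translating $v_0$ to the origin) with the Weyl chamber decomposition of $\widetilde{T}:=T\oplus\mathbb{R}c$.

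Second, I would identify the cone over the apartment $\mathcal{A}^\infty \subset \mathfrak{B}^\infty$ with $\widetilde{T}$ itself. A sector in $\mathcal{A}$ based at $v_0$ represents a chamber of $\mathcal{A}^\infty$, and the cone on this chamber is exactly the closure in $\widetilde{T}$ of this sector translated to the origin. Varying $(l,r)$ over $\mathbb{R}\times \mathbb{R}^+$ is precisely what one scales away upon forming the cone, which yields the equivalence between the single cone embedding in $L(\mathfrak{g},\sigma)$ and the $2$\ndash parameter family of embeddings of $\mathfrak{B}^\infty$ in $\widetilde{L}(G,\sigma)$ announced in the statement.

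Third, I would extend from $\mathcal{A}^\infty$ to all of $\mathfrak{B}^\infty$ by equivariance. Every chamber at infinity lies in the $\widehat{L}(G,\sigma)$\ndash translate of $\mathcal{A}^\infty$ by an apartment system argument (using Theorem~\ref{tori=apartment} to see that every apartment corresponds to a torus of finite type, hence to a conjugate of $\widetilde{T}$); the $\widehat{L}(G,\sigma)$\ndash equivariance of $\varphi_{l,r}$ then forces the extension to be well\ndash defined and equivariant. For surjectivity, I would invoke the polarity assumption: by Theorem~\ref{polaractiononmg} every element of $L(\mathfrak{g},\sigma)$ lies on a finite\ndash dimensional flat $\widehat{\mathfrak{a}}$ obtained by conjugating the standard flat, i.e.\ in the image of $\widetilde{T}$ under the adjoint action, and by the previous paragraph $\widetilde{T}$ is the cone over the standard apartment at infinity.

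The main obstacle I expect is verifying well\ndash definedness of the cone embedding: the $2$\ndash parameter family $\varphi_{l,r}$ is defined only for $r\neq 0$, and one must check that letting $r\to 0$ while rescaling by $1/r$ produces a genuine simplicial embedding of $\mathfrak{B}^\infty$ rather than a degenerate collapse. The sector\ndash based reformulation handles this, but one must check carefully that two sectors which are equivalent in the affine building (i.e.\ have a common subsector) map to the same point in the projectivization and conversely that non\ndash equivalent sectors map to distinct chambers in $\widetilde{L}(G,\sigma)$. The latter reduces, via the apartment $\mathcal{A}$, to the statement that distinct Weyl chambers of the finite Weyl group $W$ acting on $\widetilde{T}$ have disjoint interiors, which is classical.
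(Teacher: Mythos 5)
Your proposal follows essentially the same route as the paper: restricting $\varphi_{l,r}$ to a maximal torus $\widehat{T}$, passing to sectors based at a special vertex, taking closures in $\widetilde{T}$, and then extending by equivariance and using polarity for surjectivity, exactly as in the discussion the paper gives before the theorem (whose proof is stated to follow the pattern of Theorem~\ref{Embedding of the twin city}). Your additional care about well\ndash definedness on equivalence classes of sectors corresponds to the paper's remark that the embedding sees only one distinguished representative of each class of sectors, so the two arguments match in substance.
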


Via this embedding we get an identification of the spherical buildings associated to $\mathfrak{B}^+$ resp.\ $\mathfrak{B}^-$.

We can identify this loop algebra with the algebra $\{\widehat{u}\in \widehat{L}(\mathfrak{g}, \sigma)|r_d=0\}$.

\begin{proof}
The proof follows directly the pattern of the proof of theorem~\ref{Embedding of the twin city}.
\end{proof}

\noindent Continuity of the Adjoint action yields the result:

\begin{theorem}
The embedding of the cone building (i.e.\ the cone over the building, the additional vertex is mapped onto 0) is the closure of the embedding of the thickened twin city. The Adjoint action of $\widehat{L}(G,\sigma)$ on $\widehat{L}(\mathfrak{g},\sigma)$ preserves the identification of sectors in the twin city with chambers in the building $\mathfrak{B}^{\infty}$.
\end{theorem}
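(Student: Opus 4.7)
The plan is to exhibit the cone over $\mathfrak{B}^{\infty}$ as the limit in $\widehat{L}(\mathfrak{g}, \sigma)$ of the embedded images $\varphi_{l,r}(\mathfrak{B})$ as the parameter $r$ tends to zero, and then to invoke continuity of the Adjoint action to promote the equivariance of Theorem~\ref{Embedding of the twin city} to this limit. The preceding theorem already supplies an equivariant surjective embedding of the cone over $\mathfrak{B}^{\infty}$ into $\{\widehat{u}\in \widehat{L}(\mathfrak{g},\sigma) \mid r_d = 0\} \simeq L(\mathfrak{g}, \sigma)$; the task is to identify this hyperplane picture with the boundary of $\bigcup_{(l,r)\in \mathbb{R}\times\mathbb{R}^+} H_{l,r}$.

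First I would restrict attention to a single apartment, taking $\widehat{T} = T \oplus \mathbb{R}c \oplus \mathbb{R}d$ to be the standard torus. In this apartment $H_{l,r} \cap \widehat{T}$ is a finite\ndash dimensional affine slice cut out by $r_d = \pm r$ and the norm condition $|\widehat{u}| = l$, and as $r \to 0$ these slices accumulate exactly on the hyperplane $\{r_d = 0\}$, which lies in $L(\mathfrak{g}, \sigma)$. A sector $\mathfrak{C}$ based at a special vertex $v_0$ corresponds, under $\varphi_{l,r}$, to a family of cells which, as $l$ grows and $r$ degenerates, sweep out the closure of a sector in $\widetilde{T}$ based at $0$. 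By the previous theorem this closure is precisely the cone on the chamber of $\mathfrak{B}^{\infty}$ associated with $\mathfrak{C}$, with the apex (the image of $v_0$) sent to $0$. This handles the apartment\ndash level statement.

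Next I would globalize via equivariance. Every sector in $\mathfrak{B}$ is an $\widehat{L}(G,\sigma)$\ndash translate of a sector in the standard apartment, and Theorem~\ref{Embedding of the twin city} intertwines this action with the Adjoint action. Since the Adjoint action is continuous on $\widehat{L}(\mathfrak{g}, \sigma)$ in whichever analytic category we work (Hilbert, Banach, or tame Fréchet, per Corollary~\ref{bistamefrechet}), the $r \to 0$ degeneration commutes with the group action, and the apartment\ndash level identification propagates to all of $\mathfrak{B}$. Thus the closure of the embedded thickened twin city is the embedded cone over $\mathfrak{B}^{\infty}$, and the Adjoint action preserves the identification of sectors in $\mathfrak{B}$ with chambers in $\mathfrak{B}^{\infty}$.

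The main obstacle I expect is making the degeneration precise at the simplicial level: parallel sectors (representing the same chamber of $\mathfrak{B}^{\infty}$) must collapse onto a common chamber, while non\ndash parallel sectors must remain distinct in the limit. The cleanest way to control this is to take, for each fixed $l$, the pointwise limit in $\widehat{L}(\mathfrak{g}, \sigma)$ as $r \to 0$ along a ray $(t, t r_0)$ with $r_0$ fixed, and to verify that the equivalence relation of parallelism on sectors matches exactly the relation of sharing a common ray through $0$ in $\widetilde{T}$. A secondary subtlety is that $\mathfrak{B}$ is disconnected into infinitely many affine buildings; one has to check that the various limits glue coherently along the twinnings described in the preceding subsection, which reduces to the fact that any two buildings in $\mathfrak{B}^{\pm}$ share the same inner apartment system at infinity once one passes to the ambient spherical building $\mathfrak{B}^{\infty}$.
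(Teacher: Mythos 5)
Your argument follows the same route the paper takes: restrict the two-parameter family $\varphi_{l,r}$ to the standard torus $\widehat{T}$, identify the closure of sectors based at a special vertex with cones in $\widetilde{T}$ (i.e.\ in the hyperplane $r_d=0$), and then globalize by equivariance and continuity of the Adjoint action. The paper's own justification is essentially the one-line remark that continuity of the Adjoint action yields the result, so your write-up is in fact a more detailed version of the intended proof, including the useful extra care about parallelism classes of sectors and the disconnectedness of $\mathfrak{B}$.
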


\begin{remark}
Caution: Sectors in the spherical complex at infinity do NOT correspond to infinite dimensional tori in $L\mathfrak{g}$! We constructed them as intersections of tori of finite type with the space $r_d=0$.
\end{remark}

\subsection{The building at infinity for \boldmath$\mathfrak{B}$}

It is a curious fact, that the representation theory of the loop algebra behaves differently to the one of the Kac-Moody algebra --- cf.~\cite{Khesin09}, remark~1.20.   The main point is that the codimension for orbits of the adjoint action is finite for the Kac-Moody algebra and infinite  for the loop algebra.
This geometric fact has a building theoretic interpretation: The adjoint action tells us, that the codimension should correspond to the dimension of maximal flats, hence to tori, hence to apartments in the associated building. If the $d$\ndash coefficient $r_d\not=0$, then the adjoint action of the Kac-Moody algebra describes the affine building over the finite dimensional fields $\mathbb{R}$ resp.\ $\mathbb{C}$. Hence the torus is finite dimensional. On the other hand, the loop group corresponds to the realization of an affine algebraic group scheme over a ring $R$ of functions, infinite dimensional over the ground field $\mathbb{R}$ resp.\ $\mathbb{C}$. Hence, maximal Abelian subgroups are finite dimensional over $R$, but infinite dimensional over the ground field. The associated algebraic group over the quotient field of $Q(R)$ has tori of rank $\textrm{rank}(G)$ over $Q(R)$. This algebraic group corresponds to the structure at infinity of the affine building associated to $R$, which is a spherical building over the function field $Q(R)$, hence finite dimensional over the function field but infinite dimensional over $\mathbb{R}$ or $\mathbb{C}$.

\begin{theorem}
\label{theorem:spherical_building_for_mg}
Let $\mathfrak{B}$ be a twin city in the category of holomorphic functions on $X\in \{A_n, \mathbb{C}^*\}$ with affine Weyl group $W_{\textrm{aff}}=W\ltimes \mathbb{Z}^k$. $\mathfrak{B}^{\infty}$ is the spherical building of type $W$ over the field $\mathcal{M}(X)$ of meromorphic functions of $X$.
\end{theorem}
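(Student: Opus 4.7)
The plan is to identify the city $\mathfrak{B}$ as arising from the realization of the group scheme $G$ over the ring $R=\mathcal{O}(X)$ of holomorphic functions on $X$, and then to recognize $\mathfrak{B}^{\infty}$ as the spherical $G$-building over the fraction field $Q(R)=\mathcal{M}(X)$ in the standard Tits sense. The strategy is to exhibit the three defining data --- apartments, chambers and Weyl distances --- for both buildings and match them via the embedding of the (thickened) twin city into $\widehat{L}(\mathfrak{g},\sigma)$.

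First I would match apartments. By theorem~\ref{tori=apartment}, twin apartments in the thickened city correspond bijectively with tori of finite type in $\widehat{L}(G,\sigma)$. Passing to the boundary at infinity (i.e.\ intersecting with the horosphere $r_d=0$) kills the translation lattice $\mathbb{Z}^k$ and leaves a spherical apartment of type $W$; equivalently, each such torus extends to a maximal $\mathcal{M}(X)$-torus of $G(\mathcal{M}(X))$, and every maximal $\mathcal{M}(X)$-torus arises this way because a Cartan splitting is automatically rational over the function field. Next I would match chambers. Chambers of $\mathfrak{B}^{\infty}$ are parallel classes of sectors; by the Bruhat decomposition applied to any quasi-algebraic subgroup they correspond to cosets of the sector-stabilizer $B_{\mathfrak{C}}$, and after modding out the translation lattice these cosets are in bijection with $G(\mathcal{M}(X))/B_{\mathcal{M}(X)}$, the chamber set of the spherical building over $\mathcal{M}(X)$. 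The Weyl distance is then controlled by the $BN$-pair $(B_{\mathcal{M}(X)},N_{\mathcal{M}(X)})$ coming from the standard theory of reductive groups over fields; its Bruhat decomposition coincides, after the same $\mathbb{Z}^k$-quotient, with the one inherited from the geometric $BN$-pair of $\widehat{L}(G,\sigma)$. Ronan's theorem (quoted above) already supplies the analogous statement with $\mathcal{M}(X)$ replaced by $\mathbb{C}(z)$ for each individual affine twin building $\Delta^+\cup\Delta^-\subset\mathfrak{B}$, so every $\mathbb{C}(z)$-subbuilding embeds as a subbuilding of $\mathfrak{B}^{\infty}$ in the expected way.

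The hard part will be the assembly step: showing that amalgamating all the $\mathbb{C}(z)$-subbuildings --- one for each quasi-algebraic subgroup, all conjugate by theorem~\ref{algebraic_subgroups} --- yields exactly the $\mathcal{M}(X)$-building, and not something smaller or bigger. Concretely, one must check that every chamber of the spherical $\mathcal{M}(X)$-building lies in some $\mathbb{C}(z)$-subbuilding transported by an element of $\widehat{L}(G,\sigma)$, and that two such chambers have the same Weyl distance in $\mathfrak{B}^{\infty}$ as in the $\mathcal{M}(X)$-building independently of the choice of transporting element. Surjectivity should follow from the fact that $\widehat{L}(G,\sigma)$ acts transitively on tori of finite type (the conjugation property, theorem~\ref{polaractiononmg}), which geometrically encodes the fact that every element of $\mathcal{M}(X)$ can be produced, after a change of quasi-algebraic subgroup, from an element of some $\mathbb{C}(z)\subset\mathcal{M}(X)$. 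Well-definedness would use that the codistance (Bruhat twin decomposition) is globally defined on all of $\widehat{L}(G,\sigma)$ and therefore descends compatibly under the $\mathbb{Z}^k$-quotient, independent of which quasi-algebraic subgroup one picks to host a given pair of chambers at infinity.
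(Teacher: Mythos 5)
Your overall strategy is the same as the paper's: interpret the city as the realization of the group scheme $G$ over the ring $R=\mathrm{Hol}(X)$, recognize $\mathfrak{B}^{\infty}$ as the spherical building over the fraction field $Q(R)=\mathcal{M}(X)$, and realize each individual twin building's boundary as a $\mathbb{C}(z)$-subbuilding transported by an element $f$ of the loop group. The paper's actual proof is considerably terser than your plan: it only records the Bruhat decompositions $G(\mathcal{M}(\mathbb{C}^*))=B_{\mathcal{M}(\mathbb{C}^*)}WB_{\mathcal{M}(\mathbb{C}^*)}$ and $G(\mathbb{C}(z))=B_{\mathbb{C}(z)}WB_{\mathbb{C}(z)}$, the compatibility $\varphi_0(B_{\mathbb{C}(z)})=B_{\mathcal{M}(\mathbb{C}^*)}\cap\varphi_0(G(\mathbb{C}(z)))$, and the commuting square relating $\varphi_f=\mathrm{Ad}(f)\circ\varphi_0$ to the fraction-field tensor products, concluding that each $\varphi_f$ embeds the $\mathbb{C}(z)$-building as the orbit of $fB_{\mathcal{M}(\mathbb{C}^*)}$. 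In other words, the paper proves only the ``not bigger'' half --- each subbuilding at infinity of a twin building in $\mathfrak{B}$ sits inside the $\mathcal{M}(X)$-building --- and leaves the assembly step you single out (that the union over all quasi-algebraic subgroups exhausts every chamber of $G(\mathcal{M}(X))/B_{\mathcal{M}(X)}$, with consistent Weyl distances) entirely implicit. Your surjectivity sketch via the conjugation property is the right idea, but as you state it it still rests on an unproved decomposition of the shape $G(\mathcal{M}(X))=\bigcup_f fG(\mathbb{C}(z))B_{\mathcal{M}(X)}$ (an Iwasawa-type statement over the Bezout ring $\mathrm{Hol}(X)$); if you want your write-up to be genuinely stronger than the paper's you should prove that decomposition rather than translate it back into the transitivity on tori, which is essentially a restatement of the same claim.
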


\begin{remark}
The proofs are essentially an adaption of the known proofs to the case of the function field of meromorphic functions. The main difference is that the valuation we used to define the affine buildings is no longer defined on $\mathcal{M}(X)$ but only on subrings. This leads to the resulting affine building being disconnected, i.e.\ a twin city.
\end{remark}

\begin{remark}
The meromorphic functions on a subset $X\subseteq \mathbb{C}$ form a field. Let $W$ be a spherical Weyl group  $W\in \{A_n, B_{n, n\geq 2}, C_{n, n\geq 3}, D_{n, n \geq 4}, E_6, E_7, E_8, F_4, G_2\}$. Then the building of type $W$ over $\mathcal{M}(X)$ is well-defined. For the classical groups, this building can be described using the usual flag complexes.
\end{remark}

\begin{example}[{\bf SL(n)}]
The affine building associated to $\mathbb{C}[z,z^{-1}]$ is the complex of periodic flags in the $\mathbb{C}$\ndash vector space $V_{pol}$ of polynomial maps $f:\mathbb{C}^*\longrightarrow \mathbb{C}^n$ --- cf.~\cite{Garrett97}, \cite{AbramenkoRonan98}, \cite{Freyn09}, and numerous other sources. Interpret $V_{pol}$ as a module over the ring $\mathbb{C}[z, z^{-1}]$. We get a $n$\ndash dimensional vector space $V_{pol}^n:=V_{pol}\otimes_{\mathbb{C}[z, z^{-1}]} \mathbb{C}(z)$. The flag complex of this vector space is the spherical building at infinity, yielding an affine twin building $\Delta$ together with its spherical building at infinity $\Delta^{\infty}$.
The same construction works for the ring $\textrm{Hol}(\mathbb{C}^*)$ and its quotient field $\mathcal{M}(\mathbb{C}^*)$, yielding a twin city $\mathfrak{B}$ and its spherical building at infinity $\mathfrak{B}^{\infty}$ as the flag complex of a vector space $V^n_{hol}$.
Every embedding $\varphi: V^n_{pol}\longrightarrow V^n_{Hol}$ defines an embedding of the associated spherical buildings. Let $\varphi_0$ denote the canonical embedding and let $f\in MG$. Then $\varphi_f:=f\circ \varphi$ defines another embedding. While the group $SL(n, \mathbb{C}(z))\subset SL(n, \mathcal{M}(\mathbb{C}^*))$ acts on the canonically embedded building, the conjugate group $f SL(n, \mathbb{C}(z))f^{-1}$ acts on the building embedded via $\varphi_f$. This is the spherical building at infinity corresponding to the affine building corresponding to the quasi-algebraic subgroup $fSL(\mathbb{C}[z,z^{-1}])f^{-1}$ in the twin city. Details will appear in \cite{Freyn10b}.
 \end{example}

\begin{remark}
Suppose $\sigma=Id$ and focus only on the affine group scheme $G$. Homomorphisms of $\mathbb{C}(z)$ into $\mathcal{M}(\mathbb{C}^*)$ correspond to elements  $f\in \mathcal{M}(X)$ by mapping $z$ to $f(z)$. The field of rational functions in the \glqq variable\grqq\/ $f(z)$, denoted $\mathbb{C}(f(z))$ is a subfield of the field $\mathcal{M}(X)$. Hence the groups $G(\mathbb{C}(f(z)))$ embed into the group $G(\mathcal{M}(z))$. Each field homomorphism defines a group homomorphism. Thus the building over $\mathcal{M}(X)$ contains subbuildings for each subgroup  $G(\mathbb{C}(f(z)))$, which are buildings of the same type over the group field $\mathbb{C}(f(z))\simeq \mathbb{C}(z)$. Nevertheless, the torus extension, corresponding to the $c$ and $d$\ndash part of the Kac-Moody algebra makes the construction much more rigid. Let $w$ be in $\mathbb{C}^*$ factor in the Kac-Moody group corresponding to $d$. The action of $w$ on a loop $f(z)$ is defined by $w\cdot f(z)=f(zw)$. As $w\cdot z=wz$  equivariance of the homomorphism gives us: $w f(z)=f(zw)$. Hence $f(z)$ is a linear function, i.e.\ $f(z)=kz$, $k\in \mathbb{C}$. This shows, that on the level of fields, an embedding is sharply restricted by the extension structure. 
\end{remark}

\begin{proof}[Proof of theorem~\ref{theorem:spherical_building_for_mg}]
Let $W$ be the spherical Weyl group of $G(\mathcal{M}(\mathbb{C}^*))$ and $G(\mathbb{C}(z))$. Without loss of generality let $B_{\mathcal{M}(\mathbb{C}^*)}$ and $B_{\mathbb{C}(z)}$ denote the standard Borel subgroups (i.e.\ the upper\ndash triangular matrices) of the two groups. Then 
$G(\mathcal{M}(\mathbb{C}^*))=B_{\mathcal{M}(\mathbb{C}^*)}WB_{\mathcal{M}(\mathbb{C}^*)}$ and $G(\mathbb{C}(z))=B_{\mathbb{C}(z)}WB_{\mathbb{C}(z)}$. Let $\varphi_0:G(\mathbb{C}(z))\longrightarrow G(\mathcal{M}(\mathbb{C}^*))$ be the canonical embedding, (i.e.\ the embedding induced by the embedding of $\mathbb{C}(z)$ into $\mathcal{M}(\mathbb{C}^*)$ as the subfield of rational functions). Then $\varphi_0\left(B_{\mathbb{C}(z)}\right)=B_{\mathcal{M}(\mathbb{C}^*)}\cap \varphi_0 \left(G(\mathbb{C}(z))\right)$.
\begin{enumerate} 
\item Define $\varphi_f=\textrm{Ad}(f)\circ \varphi_0$. The following diagram commutes:
\begin{displaymath}
\begin{xy}
  \xymatrix{
      G(\mathbb{C}[z,z^{-1}]) \ar[rr]^{\otimes Q(\mathbb{C}[z,z^{-1}])} \ar[d]_{\varphi_f}   & &   G(\mathbb{C}(z))  \ar[d]^{\varphi_f}  \\
      MG \ar[rr]^{\otimes Q(\textrm{Hol}(\mathbb{C}^*))}           &  &   G(\mathcal{M}(\mathbb{C}^*))  
  }
\end{xy}
\end{displaymath}

Here $\otimes Q(R)$ denotes the tensor product with the fraction field $Q(R)$ of an integral domain $R$.

\item As $B_{\mathbb{C}(z)}=B_{\mathcal{M}(\mathbb{C}^*)}\cap G(\mathbb{C}(z))$, the orbit of $f B_{\mathcal{M}(\mathbb{C}^*})$ in $G(\mathcal{M}(\mathbb{C}^*))/B_{\mathcal{M}(\mathbb{C}^*)}$ under the action of $\varphi_f( G(\mathbb{C}^*))\subset G(\mathcal{M}(\mathbb{C}^*))$ is isomorphic to $G(\mathbb{C}(z))/B_{\mathbb{C}^*}$.  Hence $\varphi_f$ induces an embedding of the spherical building over the field $\mathbb{C}(z)$ into the spherical building over the field $\mathcal{M}(\mathbb{C}^*)$. The group $\varphi_f(G(\mathbb{C}(z)))$ operates transitively on this building.

\end{enumerate}

\end{proof}

\subsection{\boldmath The Hilbert space setting: $H^1$-loops acting on  $H^0$-spaces}

The spherical building at infinity for a twin city over a ring of functions $R$ is the spherical building over the fraction field $Q(R)$. The prove follows the lines for the one given in the last section --- for details about the situation of Hilbert-Lie groups --- cf.~\cite{Freyn10b}.

\bibliographystyle{alpha}
\bibliography{Doktorarbeit}

\end{document}